\title{\large{\textbf{
 A 0-1 LAW FOR THE MASSIVE GAUSSIAN FREE FIELD
 }}}

\date{}

\documentclass[a4paper,11pt, leqno]{article}
\usepackage[english]{babel}
\usepackage[a4paper,left=2.5cm,right=2.5cm,top=2.8cm,bottom=2.8cm]{geometry}
\usepackage{amsfonts, amsmath, amssymb, amsthm, mathrsfs}




\usepackage{layout}
\usepackage{color}

\usepackage{psfrag}
\usepackage{graphicx}
\usepackage[font={footnotesize}]{caption}

\usepackage{comment}
\usepackage[hang,flushmargin]{footmisc}

\numberwithin{equation}{section}

\newtheorem{thm}{Theorem}[section]
\newtheorem{lem}[thm]{Lemma}

\newtheorem{proposition}[thm]{Proposition}

\newtheorem{corollary}[thm]{Corollary}

\theoremstyle{remark}



\theoremstyle{definition}
\newtheorem{remark}[thm]{Remark}

\addtocounter{section}{-1}

\setcounter{page}{-1}

\begin{document}

\maketitle

\begin{center}
\vspace{-1cm}
Pierre-Fran\c cois Rodriguez\footnote{\noindent E-mail: pierre.rodriguez@math.ethz.ch. This research was supported in part by the grant ERC-2009-AdG 245728-RWPERCRI.} 

\vspace{1.5cm}
Preliminary draft
\end{center}
\vspace{0.3cm}
\begin{abstract}
\centering
\begin{minipage}{0.9\textwidth}
We investigate the phase transition in a non-planar correlated percolation model with long-range dependence, obtained by considering level sets of a Gaussian free field with mass above a given height $h$. The dependence present in the model is a notorious impediment when trying to analyze the behavior near criticality. Alongside the critical threshold $h_*$ for percolation, a second parameter $h_{**} \geq h_*$ characterizes a strongly subcritical regime.  We prove that the relevant crossing probabilities converge to $1$ polynomially fast below $h_{**}$, which (firmly) suggests that the phase transition is sharp. 
A key tool is the derivation of a suitable differential inequality for the free field that enables the use of a (conditional) influence theorem.


\end{minipage}
\end{abstract}
\thispagestyle{empty}

\vspace{6cm}
\begin{flushleft}
Departement Mathematik \hfill May 2015\\
ETH Z\"urich \\
R\"amistrasse 101\\
CH-8092 Z\"urich \\
Switzerland.
\end{flushleft}


\newpage
\mbox{}
\thispagestyle{empty}
\newpage

\section{Introduction}

Level-set percolation for the (massive and massless) Gaussian free field, whose study goes back at least to Molchanov and Stepanov \cite{MS83a}, as well as Lebowitz and Saleur \cite{LS86}, cf. also \cite{BLM87}, has received renewed and considerable interest in recent times, see for instance \cite{Ga04}, \cite{RoS13}, \cite{PR13}, \cite{DR14}, \cite{Sz14}; see also \cite{Sz12c}, \cite{Lu14}, \cite{Ro14} for links to the model of random interlacements, introduced by Sznitman in the influential work \cite{Sz10}. Albeit the presence of (very strong, in the massless case) correlations with infinite range, the nature of this model opens the door to the rich mathematical theory of Gaussian processes, but in spite of this, its \textit{near-critical regime} is still far from being well understood. In particular, it remains an important open problem to determine whether the phase transition is sharp. As will become apparent shortly, our results yield some progress in this direction. Their proofs rely crucially on the use of a so-called \textit{influence theorem}, which goes back to the seminal works \cite{KKL88} and \cite{BKKKL92}, see also \cite{GG06}, and was arguably (re-)popularized in the context of percolation in \cite{BR06}. Incidentally, let us mention that many celebrated results around this circle of questions for other (correlated) percolation models (see for instance \cite{GG06}, \cite{BDC12}, \cite{DCM14} in the case of random cluster models, and \cite{BR06} for percolation on (random) Voronoi tessellations) have only been proved in the \textit{planar} setting so far (a notable exception being Bernoulli percolation, cf. \cite{Me86}, \cite{AB87}, and \cite{DCT15}), where the aforementioned sharp-threshold techniques are typically paired with duality properties of the lattice to form a powerful set of tools. Our work is to some extent also an attempt to remedy this situation. 

\bigskip

We now describe our results in more detail, and refer to Section \ref{S:NOTATION} for  notation and precise definitions of the various objects involved. We consider the massive Gaussian free field on the Euclidean lattice $\mathbb{Z}^d$, $d\geq 3$, endowed with the usual nearest-neighbor graph structure. Its law $\mathbb{P}_\theta$ on $\mathbb{R}^{\mathbb{Z}^d}$, indexed by a parameter $\theta \in (0,1)$, which will be referred to as the \textit{mass}, is formally given by
\begin{equation}\label{eq:def_GFF_formal}
d\mathbb{P}_\theta \text{ ``$\propto$'' } \exp \Big\{ -\frac12 \big\langle \varphi, \big((1-\theta)\Delta + \theta\big)\varphi \big\rangle_{\ell^2(\mathbb{Z}^d)}\Big\} \prod_x d\lambda(\varphi_x),
\end{equation}
where $\Delta$ denotes the lattice Laplacian (i.e. $(\Delta f)(x) =\frac{1}{2d} \sum_{y: y\sim x}(f(y)-f(x))$, for $f\in \ell^2(\mathbb{Z}^d)$), and $\lambda$ is the Lebesgue measure on $\mathbb{R}$. The precise definition of $\mathbb{P}_\theta$ is given in \eqref{eq:GFgen} below. From a more probabilistic point of view, $\mathbb{P}_\theta$ can be seen as the law of a centered Gaussian field (indexed by $\mathbb{Z}^d$) with covariances given by the Green function of a simple random walk on $\mathbb{Z}^d$, killed uniformly with probability $\theta$ at every step. In particular, this covariance structure decays exponentially fast with distance, see \eqref{eq:st_decay_cov}. Denoting by $\varphi=(\varphi_x)_{x\in \mathbb{Z}^d}$ the canonical field (under the law $\mathbb{P}_\theta$), and for any \textit{level} $h \in \mathbb{R}$, we introduce the random subset of \nolinebreak $\mathbb{Z}^d$
\begin{equation} \label{EQ:INTRO_LEVELSET}
E^{\geq h}  = \{ x \in \mathbb{Z}^d ; \; \varphi_x \geq h \},
\end{equation}
obtained by truncating the field $\varphi$ below height $h$. We will refer to it as the \textit{level set} (above level $h$), and we will study its percolative properties. Note that, as $\theta$ varies, cf. \eqref{eq:def_GFF_formal}, the model interpolates between level-set percolation for the massless free field on the one hand (corresponding to $\theta=0$) and the well-studied case of independent (Bernoulli) site percolation on the other hand (when $\theta=1$), see e.g. the classical reference \cite{Gr99}. Since $E^{\geq h}$ is \textit{decreasing} in \nolinebreak $h$, the corresponding critical parameter is sensibly defined as
\begin{equation}\label{h*}
h_*(\theta, d) = \inf \{ h \in \mathbb{R} \, ; \, \mathbb{P}_{\theta}[0 \stackrel{\geqslant h}{\longleftrightarrow} \infty]=0 \} \ \in [-\infty, \infty]
\end{equation}
(with the convention $\inf \emptyset = \infty$), where $\{0 \stackrel{\geqslant h}{\longleftrightarrow} \infty\}$ is the event that the origin lies in an infinite connected component of $E^{\geq h}$. In particular, $h_*$ is such that, for all $h<h_*$, the set $E^{\geq h}$ contains a (unique) infinite connected component $\mathbb{P}_{\theta}$-a.s. (the \textit{supercritical regime}), whereas for all $h>h_*$,  $E^{\geq h}$ consists $\mathbb{P}_{\theta}$-a.s of finite clusters only (the \textit{subcritical regime}). It has been known at least since the work of \cite{MS83a}, \cite{MS83b} (see also \cite{Ga04}) that there is a non-trivial phase transition in the massive case, i.e.
$$
-\infty < h_*(\theta, d) < \infty, \text{ for all $d\geq 3$ and $\theta \in (0,1)$}
$$
(incidentally, this, and more, is also true when $\theta = 0$, see  \cite{BLM87}, \cite{RoS13}). In fact, mimicking the methods of \cite{RoS13}, as summarized below in Theorem \ref{T:h_**_properties}, one can obtain (much) more quantitative information on the nature of the subcritical regime. To this end, we introduce a central quantity of interest, 
\begin{equation}\label{eq:def_p}
p_{\theta, L}(h) = \mathbb{P}_{\theta}[B(0,L)\stackrel{\geqslant h}{\longleftrightarrow}S(0,2L)], \text{ for } \theta \in (0,1), \, L \geq1, \text{ and } h \in \mathbb{R},
\end{equation}
where the event $\{ B(0,L) \stackrel{\geqslant h}{\longleftrightarrow} S(0, 2L) \}$ refers to the existence of a (nearest-neighbor) path in $E^{\geq h}$ connecting $B(0,L)$, the ball of radius $L$ around $0$ in the $\ell^\infty$-norm, to $S(0, 2L)$, the $\ell^\infty$-sphere of radius $2L$ around $0$.  Note that $p_{\theta, L}(\cdot)$ is a decreasing function. We define a second critical parameter
\begin{equation} \label{eq:h_**}
h_{**}(\theta , d) = \inf \big\{ h \in \mathbb{R} \, ; \; \text{for some $\alpha >0$,} \;  \lim_{L\to \infty} L^\alpha \, p_{\theta, L}(h)= 0 \big\} 
\end{equation}
(the condition on $p_{\theta,L}(\cdot)$ can be somewhat relaxed, see \cite{PR13} and \eqref{eq:h_**bis} below). It is almost immediate that $h_*(\theta,d) \leq h_{**}(\theta, d)$, and one can show (similarly to the massless case, but with some simplifications) that 
\begin{equation*}
h_{**}(\theta, d)<\infty , \text{ for all $d\geq 3$ and $\theta \in (0,1)$}.
\end{equation*}
The threshold $h_{**}$ is a fundamental quantity because it characterizes a \textit{strongly} subcritical regime, in the sense that, for suitable constants $c,c' >0$, $\rho \in (0,1]$, and all $d\geq 3$,
\begin{equation}\label{eq:st_MAINbisbis}
 p_{\theta, L}(h) \leq c(\theta, h) e^{- c'(\theta,h) \cdot L^{\rho(\theta,h)}}, \text{ for all $\theta \in (0,1)$, $h> h_{**}$ and $L \geq 1$}
\end{equation}
(and therefore, the connectivity function $ \mathbb{P}_{\theta}[ 0 \stackrel{\geqslant h}{\longleftrightarrow}  x ]$ decays stretched exponentially in $|x|$, for $h> h_{**}$). It is at present an essential open question to prove (or disprove) that $h_*(\theta,d)$ and $h_{**}(\theta, d)$ coincide. One of the central results of this paper is the following theorem. Roughly speaking, it establishes an ``approximate 0--1 law'' for the function $ p_{\theta, L}(\cdot)$ around $h_{**}$ by providing a suitable lower bound, companion to \eqref{eq:st_MAINbisbis}, for the probability $p_{\theta, L}(h)$ at values of $h$ slightly \textit{below} $h_{**}$. 

\begin{thm}\label{T:MAIN}$(d\geq 3)$

\medskip
\noindent For all $\theta \in (0,1)$ and $h< h_{**}(\theta)$, there exist constants $\varepsilon(\theta, h) > 0$ and $C_0(\theta,h) \geq 1$ such that, for all $L \geq 1$,
\begin{equation}\label{EQ:MAIN}
p_{\theta, L}(h) \geq 1- C_0(\theta,h) \cdot L^{-\varepsilon(\theta, h)}. 
\end{equation}

\end{thm}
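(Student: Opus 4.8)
The plan is to deduce the polynomial lower bound in \eqref{EQ:MAIN} from a renormalization-type argument fed by an influence (sharp-threshold) theorem. First I would fix $h<h_{**}(\theta)$ and pick an auxiliary level $\tilde h$ with $h<\tilde h<h_{**}(\theta)$; by the very definition \eqref{eq:h_**} of $h_{**}$, and since $p_{\theta,L}(\cdot)$ is decreasing, one knows that $p_{\theta,L}(\tilde h)$ does \emph{not} go to $0$ polynomially fast, i.e.\ $\liminf_L L^\alpha p_{\theta,L}(\tilde h)>0$ fails for every $\alpha>0$. The first task is to upgrade this ``not polynomially small'' information into the statement that $p_{\theta,L}(\tilde h)$ is actually bounded away from $0$ along a suitable scale, say $p_{\theta,L_k}(\tilde h)\geq c>0$ for a geometric sequence $L_k$. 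This is a standard consequence of a one-step renormalization (à la \cite{RoS13}, \cite{PR13}): a crossing of $B(0,L)$ to $S(0,2L)$ at level $\tilde h$ can be manufactured from crossings of a bounded number of overlapping boxes at a smaller scale, at the cost of correcting the field via the (exponentially small, by \eqref{eq:st_decay_cov}) long-range dependence, so that if $p_{\theta,\ell}(\tilde h)$ were always small one could iterate and contradict $h_{**}>\tilde h$. The upshot is a scale $L_0$ and $c_1>0$ with $p_{\theta,L_0}(\tilde h)\geq c_1$.

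Next I would run the sharp-threshold machinery to turn ``$p_{\theta,L}(\tilde h)\geq c_1$'' into ``$p_{\theta,L}(h)$ close to $1$'' for the \emph{smaller} level $h$, at the price of a polynomial error. The crossing event $\{B(0,L)\stackrel{\geqslant h}{\longleftrightarrow} S(0,2L)\}$ is increasing in the field $\varphi$ (equivalently, decreasing in $h$), so by the (conditional) influence theorem advertised in the abstract and introduction — the key technical input is the differential inequality for the free field mentioned there, which bounds $\frac{d}{dh}p_{\theta,L}(h)$ from below in terms of $p_{\theta,L}(h)(1-p_{\theta,L}(h))$ divided by a logarithmic factor $\log$ of the number of relevant coordinates (here polynomial in $L$, so the factor is $\sim\log L$), provided the maximal influence of a single coordinate is small. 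The maximal-influence bound has to be supplied separately: typically one shows each site's influence on the crossing event is $O(L^{-c})$ using the a priori control from \eqref{eq:st_MAINbisbis} at level $\tilde h$ (or $h_{**}+$something) together with a union bound over the $\leq cL^d$ sites. Integrating the differential inequality $\frac{d}{dh}\log\frac{1-p}{p}\geq \frac{c}{\log L}$ from $h$ up to $\tilde h$ over the interval of length $\tilde h-h>0$ then yields $1-p_{\theta,L}(h)\leq \big(1-p_{\theta,L}(\tilde h)\big)^{c(\tilde h-h)/\log L}\cdot(\ldots)$, which one checks is at most $C_0 L^{-\varepsilon}$ with $\varepsilon=\varepsilon(\theta,h)>0$ — this is exactly the form of \eqref{EQ:MAIN}. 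One still has to pass from the special scales $L_k=L_02^k$ to all $L\geq 1$, but that is routine: a crossing at a nearby scale is a crossing at scale $L$ up to adjusting constants, and for small $L$ one simply enlarges $C_0$.

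The main obstacle, and the part I expect to absorb most of the work, is establishing the differential inequality and the small-influence estimate in the correct, Gaussian (rather than Boolean-cube) setting. The influence theorems of \cite{KKL88}, \cite{BKKKL92}, \cite{BR06} are stated for product measures on $\{0,1\}^n$ (or $[0,1]^n$), whereas here the randomness is a strongly correlated continuous Gaussian field; one must either discretize the field in a way that controls how increments in $h$ correspond to flipping finitely many bits while keeping the influences of single bits small, or prove a genuinely Gaussian/continuous conditional version of the influence inequality. This is presumably where the \emph{conditioning} alluded to in the abstract enters: one conditions on the field outside a box (or on a coarse-grained version of it) to recover an \emph{approximately} independent structure with exponentially small correction from \eqref{eq:st_decay_cov}, applies the influence theorem to the conditional law, and then integrates out. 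Controlling these conditional corrections uniformly — so that they do not destroy the $1/\log L$ gain in the differential inequality — and verifying that the derivative of the crossing probability in $h$ really is captured by a sum of single-coordinate influences (an integration-by-parts / Gaussian-interpolation computation) are the delicate points; everything downstream is bookkeeping with constants depending on $\theta$ and $h$.
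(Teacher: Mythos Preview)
Your overall strategy---feed a sharp-threshold/influence inequality with an a priori lower bound at an intermediate level $\tilde h\in(h,h_{**})$, integrate over $[h,\tilde h]$, and deduce that $1-p_{\theta,L}(h)$ is polynomially small---is the right shape, but the proposal contains a genuine gap that makes the argument fail as written. The differential inequality you state has the logarithm on the wrong side: from the $\ell^1$-influence theorem \eqref{eq:inf_thm_l_1} together with the comparison \eqref{EQ:Russo_formula_inf} one gets $-\tfrac{d}{dh}\mathbb{P}[A^h]\geq c\,\mathrm{Var}(1_{A^h})\cdot \log\big(1/(2\|I\|_\infty)\big)$, so a factor $\sim\log L$ \emph{multiplies} the variance (provided $\|I\|_\infty\leq L^{-c}$), it does not divide it. With the inequality you wrote, $\tfrac{d}{dh}\log\frac{1-p}{p}\geq c/\log L$, integration only yields $\frac{1-p(h)}{p(h)}\leq \frac{1-p(\tilde h)}{p(\tilde h)}\,e^{-c(\tilde h-h)/\log L}$, and the right-hand side tends to a positive constant as $L\to\infty$; in particular your displayed bound $(1-p(\tilde h))^{c(\tilde h-h)/\log L}\leq C_0 L^{-\varepsilon}$ is simply false. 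To obtain the polynomial rate you need $\tfrac{d}{dh}\log\frac{1-p}{p}\geq c\log L$, i.e.\ you need $\|I\|_\infty\leq L^{-c}$, and your proposed source for this---the stretched-exponential decay \eqref{eq:st_MAINbisbis}---is only available for heights \emph{above} $h_{**}$, not on the interval $[h,\tilde h]$ where you are integrating. (Working directly on $\mathbb{Z}^d$, the paper can only show $\|I\|_\infty\to 0$ without rate, cf.\ Lemma~\ref{L:inf_bound_BIS}, which yields \eqref{EQ:MAINbis} but not \eqref{EQ:MAIN}.)

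The missing idea is to bypass the upper bound on $\|I\|_\infty$ altogether by passing to a \emph{translation-invariant} event on a torus $\mathbb{T}_{\overline L}$ with $\overline L\asymp L$: one takes $\mathcal{A}_L=\bigcup_{x\in\mathbb{T}_{\overline L}}\{B(x,\ell L)\leftrightarrow S(x,2\ell L)\}$. For such events all influences coincide, so the $\ell^\infty$-\emph{lower} bound \eqref{eq:inf_thm_l_infinity} directly gives $\|I\|_1=|\mathbb{T}_{\overline L}|\cdot\|I\|_\infty\geq c\,\mathrm{Var}\cdot\log|\mathbb{T}_{\overline L}|$, hence the clean inequality \eqref{EQ:diff_ineq_trans_inv}, which integrates to $1-q_{\theta,L}(h)\leq c\,L^{-c'(\tilde h-h)}$. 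One then transfers this back to $p_{\theta,L}(h)$ in two steps: a square-root trick over the $O(1)$ boxes of a fixed grid paving $\mathbb{T}_{\overline L}$ converts the union $\mathcal{A}_L$ into a single fixed-box crossing, and a change-of-boundary-conditions lemma (exponential in $L$, by \eqref{eq:st_decay_cov}) replaces $\mathbb{P}_\theta^{\overline L}$ by $\mathbb{P}_\theta$. As for the ``Gaussian vs.\ product'' issue you flag at the end: no discretization or conditioning-to-independence is used; one proves an exact Margulis--Russo formula for the free field (Proposition~\ref{P:Russo_formula}) and a pointwise comparison with the Graham--Grimmett conditional influences (Proposition~\ref{P:dom_inf_ptwise}), relying only on the FKG lattice condition for the level-set law.
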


Theorem \ref{T:MAIN} can be somewhat generalized, see Remark \ref{R:FINAL}, 2) below. In particular, it also holds mutatis mutandis in dimension two. We deliberately refrain from including this case in our exposition because a few results shown along the way continue to hold in the massless case (which is not well-defined when $d=2$). Moreover, as hinted at in the expository paragraph, we wish to emphasize that our methods are completely non-planar.

We now give a broad outline of the proof. It would be too reductive to let Theorem \ref{T:MAIN} stand alone, as some of the results deduced \textit{en route}, and among them, certain differential inequalities, are interesting in their own right. The polynomial speed of convergence in \eqref{EQ:MAIN} is ultimately obtained by working under periodic boundary conditions, hence most of the results presented below hold more generally under $\mathbb{P}_\theta^{\mathcal{G}}$, where $\mathcal{G}$ can either be a discrete torus or $\mathbb{Z}^d$ (so that $\mathbb{P}_\theta^{\mathbb{Z}^d} = \mathbb{P}_\theta$), cf. \eqref{eq:GFgen}, \eqref{EQ:notation} for precise definitions. We investigate the derivative $d \,  \mathbb{P}_{\theta}^{\mathcal{G}}[A^{h}] / dh$, for arbitrary $A \subset \{ 0,1\}^{K}$, with $K \subset \subset \mathcal{G}$ and $h \in \mathbb{R}$ (where $A^h = \{ (1\{ \varphi_x \geq h\})_x \in A \}$, part of $\mathbb{R}^{\mathcal{G}}$). First, we arrive in Proposition \ref{P:Russo_formula} to a Margulis-Russo type formula, cf. \cite{Ma74}, \cite{Ru81}, for the free field (valid also when $\theta=0$), of the form
\begin{equation}\label{eq:Russo_intuitive}
- \frac{d \,  \mathbb{P}_{\theta}^{\mathcal{G}}[A^{h}] }{dh} =\sum_{x\in K} T_{\theta}(A^h,x)
\end{equation}
(the minus sign is because $ \mathbb{P}_{\theta}^{\mathcal{G}}[A^{h}]$ is decreasing in $h$, when $A$ is increasing, by our above convention, cf. \eqref{EQ:INTRO_LEVELSET}). The specific form of the terms $T_{\theta}(A^h,x)$ is irrelevant for the purposes of this Introduction, but we note the following that $T_{\theta}(A^h,x) \geq 0$ for all $x\in K$, $h \in \mathbb{R}$, whenever $A$ is increasing, cf. Remark \ref{R:deriv_comments}, 1). Moreover, the sum appearing in \eqref{eq:Russo_intuitive} can naturally be thought of as arising from a chain rule when computing the derivative.

A key point, which comes in Proposition \ref{P:dom_inf_ptwise} below, is that, under certain assumptions, the right-hand side of \eqref{eq:Russo_intuitive} can be made to ``dominate'' a sum of so-called conditional influences, defined as
\begin{equation}\label{eq:inf_00}
I^{\mathcal{G}}_{\theta} (A^h, x) = \mathbb{P}_{\theta}^{\mathcal{G}}[A^{h}\, | \, \varphi_x \geq h] - \mathbb{P}_{\theta}^{\mathcal{G}}[A^{h}\, | \, \varphi_x < h],
\end{equation}
and introduced by Graham and Grimmett in \cite{GG06} in the context of the random-cluster model. A corresponding influence theorem is derived in  \cite{GG06}, cf. also Theorem \ref{T:inf_thm} below, and essentially obtained by pairing a strong FKG property (which holds in the present case as well, see Lemma \ref{L:FKG1}) with the classical results of \cite{KKL88}, \cite{BKKKL92}. More precisely, Proposition \ref{P:dom_inf_ptwise} implies that, given $M>0$, for all increasing events $A\subset \{ 0,1\}^K$ and $h \in (-M, M)$,
\begin{equation}\label{0000}
- \frac{d \,  \mathbb{P}_{\theta}^{\mathcal{G}}[A^{h}] }{dh}  \geq {c}(\theta, M) \sum_{x\in K} I_{\theta}(A^h,x).
\end{equation}
It is crucial that the constant $c$ appearing here is uniform in $K$. In fact, rather than the required comparison ``in $\ell^1$,'' we provide an (arguably much stronger) \textit{pointwise} estimate of the form $T_{\theta}(A^h,x) \geq c(\theta, M) \cdot I_{\theta}(A^h,x)$ for $h\in  (-M, M)$. 

The domination result \eqref{0000} is then paired with an influence theorem to yield the aforementioned differential inequality for $\mathbb{P}_{\theta}^{\mathcal{G}}[A^{h}] $, when $A$ is increasing, see Corollary \ref{C:diff_ineq}. 
We refrain from writing it down explicitly here, but rather note that, in order to prove a statement such as \eqref{EQ:MAIN}, one typically would like the derivative $- d \,  \mathbb{P}_{\theta}^{\mathcal{G}}[A^{h}] /dh$ to be sufficiently \textit{large}, as to yield a meaningful lower bound on $\mathbb{P}_{\theta}^{\mathcal{G}}[A^{h}]$ upon integration over the interval of interest (in our case, located slightly below $h_{**}$). As it turns out, this requires showing that the maximal influence is sufficiently small. In Proposition \ref{P:NO_SPEED} below, we prove a weaker version of Theorem \ref{T:MAIN} (without speed of convergence), by working directly on $\mathcal{G}=\mathbb{Z}^d$, and proving (see Lemma \ref{L:inf_bound_BIS}) a suitable upper bound for the maximal influence of the box-to-box crossing events of \eqref{eq:def_p}. We won't discuss the details of the proof here, but the bottom line is that this is due to the geometry of the event $\{ B(0,L) \stackrel{\geqslant h}{\longleftrightarrow} S(0, 2L) \}$ in question (for comparison, consider an event of the type $\{ 0 \stackrel{\geqslant h}{\longleftrightarrow} S(0,L)\}$, where the points around the origin are expected to have a rather large influence). The actual proof of Theorem \ref{T:MAIN} is presented thereafter, and bypasses this necessity by working with a suitably chosen \textit{translation invariant} event $\mathcal{A}_L^h$, under periodic boundary conditions (inside a torus $\mathbb{T}_{\overline{L}}$ of ``size'' $\overline{L} \gtrsim L$). Indeed, the influence theorem automatically gives a \textit{lower bound} on the maximal influence, cf.  \eqref{eq:inf_thm_l_infinity}, which is a priori of little help, but becomes relevant when considering translation invariant events (since all influences are then equal by symmetry). The definition of $\mathcal{A}_L^h$ allows for the resulting lower bound on $\mathbb{P}_{\theta}^{\mathbb{T}_{\overline{L}}}[\mathcal{A}_L^h]$, for $h$ close to $h_{**}$, obtained from the (periodic) differential inequality, to be translated back to a meaningful lower bound on $p_{\theta,L}(h)$, thus yielding \eqref{EQ:MAIN}.

\bigskip

We now describe the organization of this article. Section \ref{S:NOTATION} introduces some notation as well as the main objects involved, and recalls certain properties of the free field that will be used repeatedly in the sequel. It also contains a proof of the ``FKG lattice condition'' needed for the application of the influence theorem. Section \ref{S:PT_RUSSO} briefly reviews some known results regarding the phase transition, and Proposition \ref{P:Russo_formula} contains the ``Margulis-Russo''-type formula mentioned above. Section \ref{S:dom_inf} is centered around the proof of \eqref{0000}, which is the object of Proposition \ref{P:dom_inf_ptwise}. This Proposition is then combined with an influence theorem (Theorem \ref{T:inf_thm}) to yield the desired differential inequalities in Corollary \ref{C:diff_ineq}. Finally, Section \ref{S:APPS} deals with the applications to the crossing events of interest, and contains in particular the proof of Theorem \ref{T:MAIN}.

\bigskip

A word about constants: in what follows $c,c', c'', \dots$ denote positive constants having values that can change from place to place. Numbered constants $c_0,c_1,c_2,\dots$ are defined upon first appearance in the text and remain fixed from then on until the end of the article. The dependence of constants on the dimension $d$ will be kept implicit throughout, but their dependence on any other parameter will appear explicitly in the notation.

\section{Notation and Preliminaries}\label{S:NOTATION}

In this section, we introduce some notation to be used in the sequel, as well as the random walks and Gaussian fields of interest. We also collect some of their properties which will be of importance below. These include in particular a brief reminder on conditional expectations for the free field, and a certain (strong) FKG-type inequality.

We denote by 
$\mathbb{Z}=\{\dots,-1,0,1,\dots\}$ the set of integers, write $\mathbb{R}$ for the set of real numbers, and abbreviate $x \wedge y = \min \{x,y\}$ and $x \lor y = \max\{ x,y\}$ for any two numbers $x,y \in \mathbb{R}$. 
We consider the lattice $\mathbb{Z}^{d}$ (tacitly assuming throughout
that $d\geq3$) or the discrete torus $\mathbb{T}_L = (\mathbb{Z}/ 2L\mathbb{Z})^d$, for some $L \geq 1$ and $d\geq 3$ (the extra factor of $2$ is for later convenience), which we endow with the usual nearest-neighbor graph
structure, and denote by $\vert\cdot\vert$ the $\ell^{\infty}$ distance on it. In what follows, unless specified otherwise, the vertex set $\mathcal{G}$ stands for either $\mathbb{Z}^{d}$ or $\mathbb{T}_L$, $L \geq 1$. We will often use $x \sim y$ instead of $|x-y|_1=1$ (where $|\cdot|_1$ denotes the $\ell^1$, i.e. graph distance) for two neighboring vertices $x,y \in \mathcal{G}$. Moreover, for $r\geq0$ and $x\in\mathcal{G}$, we let $B(x,r)=\{y \in \mathcal{G};\ \vert y-x\vert\leq r\}$ and $S(x,r)=\{y\in\mathcal{G};\ \vert y-x\vert=r\}$ stand for
the the $\ell^{\infty}$-ball and $\ell^{\infty}$-sphere of radius $r$ centered
at $x$, and simply write $B_{r}$ and $S_{r}$ if $x=0$ (in the case of $\mathbb{Z}^d$). Given $K$
and $U$ subsets of $\mathcal{G}$, $K^{c}=\mathcal{G}\setminus K$
stands for the complement of $K$ in $\mathcal{G}$, $\vert K\vert$
for the cardinality of $K$, and $K\subset\subset\mathbb{Z}^{d}$
means that $\vert K\vert<\infty$. 

We now introduce the random walks of interest. To
this end, we add a cemetery state $\Delta$ to $\mathcal{G}$ ($=\mathbb{Z}^{d}$ or $\mathbb{T}_L $), i.e. we connect each vertex in $\mathcal{G} \cup \{ \Delta \}$ by an edge to $\Delta$ and denote by $W$ the space of nearest-neighbor $(\mathcal{G} \cup\{\Delta\})$-valued trajectories defined for non-negative times which are absorbed in
$\Delta$ once they reach it, i.e. of sequences $(x_{n})_{n\geq0}$
satisfying $x_{n}\in\mathcal{G} \cup\{\Delta\}$, with $x_{n+1} \sim x_{n}$
for all $n \geq 0 $, $x_{n} =\Delta$ for some $n \geq 0$, and $x_{k+1}=\Delta$ whenever $x_{k}=\Delta$, for some $k\geq0$. We let $\mathcal{W}$,
$(X_{n})_{n\geq0}$, stand for the canonical $\sigma$-algebra and
canonical process on $W$, respectively. Given a parameter $\theta\in[0,1]$,
we consider the Markov chain on $\mathcal{G}\cup\{\Delta\}$ with
transition probabilities 
$$
p_{x,y}=\frac{1}{2d}(1-\theta)1_{\{x \sim y\}},\quad p_{x,\Delta}=\theta,\quad p_{\Delta,\Delta}=1,\quad\text{for all \ensuremath{x,y\in \mathcal{G}}}.
$$

We will refer to $\theta$ as the \textit{mass} of the system. We denote by $P_{\theta}^{x}$ the canonical law on $(W,\mathcal{W})$ of the walk starting at $x\in\mathbb{Z}^{d}$, and by $E_{\theta}^{x}$
the corresponding expectation. 
Thus, $P_{\theta}^{x}$ describes a random walk on $\mathcal{G}$ which is killed uniformly with probability $\theta$ at every step. Somewhat more generally, given a subset $U\subset\mathcal{G}$, we write
$P_{\theta,U}^{x}$ for the law (on $(W,\mathcal{W})$) of the walk
starting at $x\in\mathcal{G}$ killed uniformly at rate $\theta$
or when first entering in $U$ (in particular, we have $P_{\theta,\emptyset}^{x} = P_{\theta}^{x}$). 
Accordingly, we introduce the Green function $g_{\theta,U}(\cdot,\cdot)$ of
this walk as 
\begin{equation}
\begin{split}g_{\theta,U}(x,y) & =\sum_{n\geq0}P_{\theta,U}^{x}[X_{n}=y] =\sum_{n\geq0}(1-\theta)^{n}P^x_{0}[X_{n}=y,\ n<H_{U}], \text{  for }x,y\in\mathcal{G},
\end{split}
\label{EQ:st_GreenFunction}
\end{equation}
where $H_{U}=\inf\{n\geq0;X_{n}\in U\}$ denotes the entrance time in $U$. We will also need the stopping time $\widetilde{H}_{U}=\inf\{n\geq1;X_{n}\in U\}$, the hitting time of $U$. Note that $g_{\theta,U}(x,y)$ is finite and symmetric in both coordinates, and vanishes if $x\in U$ or $y\in U$. We simply write $g_{\theta}(\cdot,\cdot)$ when $U=\emptyset$,
and observe that $g_{\theta}(x,y)=g_{\theta}(x-y,0)\stackrel{\text{def.}}{=}g_{\theta}(x-y)$ due to translation invariance. Moreover, for all $U \subseteq \mathcal{G}$ and $K \subseteq U^c$, we have, by the strong Markov property (at time $H_{K}$), 
\begin{equation}
g_{\theta, U}(x,y)=g_{\theta, U\cup K}(x,y)+E_{\theta,U}^{x}[H_{K}<\infty,\, g_{\theta,U}(X_{H_{K}},y)],\qquad\text{for }x,y\in\mathcal{G}.\label{G-GsubK}
\end{equation}
Finally, it follows immediately from \eqref{EQ:st_GreenFunction} that
\begin{equation}\label{eq:st_decay_cov}
g_{\theta,U}(x,y) \leq c(\theta) e^{-c'(\theta)\cdot |x-y|}, \text{ for all $U\subset \mathcal{G}$, $x,y \in \mathcal{G}$}.
\end{equation}
For future reference, we also note that the entrance probability in $K$ can
be expressed as 
\begin{equation}
P_{\theta}^{x}[H_{K}<\infty]=\sum_{n\geq0}\sum_{y\in K}P_{\theta}^{x}[X_{n}=y,\widetilde{H}_{K}\circ\tau_{n}=\infty]=\sum_{y\in K}g_{\theta}(x,y)\cdot P_{\theta}^{y}[\widetilde{H}_{K}=\infty] ,\label{EQ:st_entr_prob}
\end{equation}
for all $x\in\mathcal{G}$, where $\tau_{n}w(k)=w(k+n)$, for $n,k\geq0$,
$w\in W$ denote the canonical shifts and the last step follows from
the simple Markov property (at time $n$).

Next, we define the Dirichlet forms associated to the above random walks. For arbitrary $f \in \ell^2(\mathcal{G})$, we let
\begin{equation}\label{eq:Dirichlet}
\mathcal{E}(f,f)=\frac{1}{2}\sum_{ x\sim y} \frac{1-\theta}{2d}(f(y)-f(x))^2+ \theta \cdot\sum_{x}f(x)^2,
\end{equation}
where the sums run over $x,y \in \mathcal{G}$. The quantity $\mathcal{E}(f,f) \geq 0$ is finite, for all
$f \in \ell^2(\mathcal{G})$, and can be extended to a bilinear form $\mathcal{E}(\cdot,\cdot)$ on $\ell^2(\mathcal{G}) \times \ell^2(\mathcal{G})$ by polarization. Given $K \subset \subset \mathcal{G}$, we also define the trace Dirichlet form on $K$,
\begin{equation}\label{eq:Trace_Dirichlet}
\mathcal{E}^{\text{tr}}_K(f,f)= \frac{1}{2} \sum_{ x,y \in K} c^{\text{tr}}_{x,y} (f(y)-f(x))^2+ \sum_{x \in K} \kappa^{\text{tr}}_{x}f(x)^2,
\end{equation}
for $f \in \mathbb{R}^K$, where 
\begin{align}
&c^{\text{tr}}_{x,y} = P_{\theta}^{x}[\widetilde{H}_K < \infty, X_{\widetilde{H}_K}=y]1_{\{x \neq y \}}, \text{ for $x, y \in K$} \label{eq:trace_cond}\\
&\kappa^{\text{tr}}_{x} = P_{\theta}^{x}[\widetilde{H}_K = \infty], \text{ for $x \in K$} \label{eq:trace_kill}
\end{align}
(in particular, $\mathcal{E}^{\text{tr}}_{\mathcal{G}}(\cdot,\cdot) = \mathcal{E}(\cdot,\cdot)$).

We now introduce the Gaussian fields of interest. For $\theta\in(0,1]$ and $U\subseteq \mathcal{G}$, we denote by $\mathbb{P}^{\, \mathcal{G}}_{\theta, U}$ the law on $ \mathbb{R}^{\mathcal{G}}$, endowed with its canonical $\sigma$-algebra, under which the canonical coordinates $\varphi=(\varphi_{x})_{x\in\mathcal{G}}$ are distributed as a centered Gaussian field with covariance 
\begin{equation}\label{eq:GFgen}
\mathbb{E}^{\, \mathcal{G}}_{\theta,U}[\varphi_{x}\varphi_{y}]=g_{\theta,U}(x,y),\quad\text{for all \ensuremath{x,y\in\mathcal{G}}},
\end{equation}
and simply write $\mathbb{P}^{\, \mathcal{G}}_{\theta}$ when $U=\emptyset$.
In particular $\varphi_{x}=0$, $\mathbb{P}^{\, \mathcal{G}}_{\theta,U}$-a.s. for
every $x\in U$. 
Note that the massless case is excluded here, since the measure $\mathbb{P}^{\, \mathcal{G}}_{\theta= 0}$ is not well-defined in the case of periodic boundary conditions. Although our main theorem regards positive mass only, some of the results we show along the way also hold for $\mathbb{P}^{\, \mathbb{Z}^d}_{0}$. Thus, we adopt the following convention. In writing a statement concerning some generic measure $\mathbb{P}^{\, \mathcal{G}}_{\theta, U}$ with $U \subset \mathcal{G}$ and $\theta \in [0,1]$, we \textit{always} tacitly assume that $\theta =0$ is excluded in case $\mathcal{G} = \mathbb{T}_L$ for some $L \geq 1$.

We proceed by recalling a classical fact concerning conditional distributions for the Gaussian free field. 
\begin{lem} \label{L:cond_exps} $(\theta \in [0,1], \, U \subset \mathcal{G},\,  K \subset \subset U^c)$
\medskip

\noindent Let $(\widetilde{\varphi}_x)_{x \in \mathcal{G}}$ be defined by
\vspace{-0.5ex}
\begin{equation} \label{mudecomp}
\varphi_x = \widetilde{\varphi}_x + \mu_x,  \text{ for } x \in \mathcal{G},
\end{equation}
where $\mu_x$ is the $\sigma(\varphi_x ; x \in K)$-measurable map defined as
\begin{equation} \label{mu}
\mu_x = E^x_{\theta,U} [H_{K} < \infty , \varphi_{X_{H_{K}}}] = \sum_{y \in K } P^x_{\theta,U} [H_{K} < \infty , X_{H_{K}} = y] \cdot \varphi_y,  \text{ for } x \in \mathcal{G}.
\end{equation}
Then,
\begin{equation} \label{ind+cond_exps}
\begin{split}
&\text{under $\mathbb{P}^{\, \mathcal{G}}_{\theta,U}$, the field $(\widetilde{\varphi}_x)_{x \in \mathcal{G}} $  is independent from} \\
&\text{$\sigma(\varphi_x ; x \in K)$, and distributed as $(\varphi_x)_{x \in \mathcal{G}}$ under $\mathbb{P}^{\, \mathcal{G}}_{\theta, U\cup K}$.}
\end{split}
\end{equation}
In addition, the law of $(\varphi_x)_{x\in K}$ under $\mathbb{P}^{\, \mathcal{G}}_{\theta}$ is given by
\begin{equation}\label{eq:phi_K_law}
\frac{1}{Z(\mathcal{G}, K, \theta)} \exp \Big[  -\frac12 \mathcal{E}^{\mathrm{tr}}_K(\varphi, \varphi) \Big] \prod_{x\in K} d\lambda (\varphi_x),
\end{equation}
where $\mathcal{E}^{\mathrm{tr}}_K(\cdot, \cdot)$ refers to the trace Dirichlet form defined in \eqref{eq:Trace_Dirichlet}, $\lambda$ denotes Lebesgue measure on $\mathbb{R}$ and $Z$ is a suitable normalizing constant.
\end{lem}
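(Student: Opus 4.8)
The plan is to prove the two assertions of Lemma \ref{L:cond_exps} separately, since they are essentially standard computations in the Gaussian world and rely on the potential-theoretic identities collected above.

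For the decomposition \eqref{ind+cond_exps}: first I would observe that $\widetilde{\varphi}$ as defined in \eqref{mudecomp}--\eqref{mu} is again a centered Gaussian field (it is a fixed linear combination of the $\varphi$'s), so it suffices to compute covariances. The key point is that $\mathbb{E}^{\mathcal{G}}_{\theta,U}[\widetilde{\varphi}_x \varphi_y] = 0$ for every $y \in K$, which, using \eqref{eq:GFgen} and the strong Markov property at time $H_K$ in the form \eqref{G-GsubK} (applied with the roles adjusted so that $g_{\theta,U}(x,y) = g_{\theta,U\cup K}(x,y) + E^x_{\theta,U}[H_K<\infty, g_{\theta,U}(X_{H_K},y)]$, and the first term vanishes when $y \in K$ since $g_{\theta,U\cup K}$ kills on $K$), reduces exactly to the definition of $\mu_y = \varphi_y$ for $y \in K$ (indeed $\mu_y$ is $\varphi_y$ itself for $y\in K$ because $H_K = 0$ $P^y$-a.s.). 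Joint Gaussianity then upgrades this vanishing covariance to independence of $\widetilde{\varphi}$ from $\sigma(\varphi_x; x\in K)$. Finally, to identify the law of $\widetilde{\varphi}$, I would compute, for general $x,x' \in \mathcal{G}$, $\mathbb{E}^{\mathcal{G}}_{\theta,U}[\widetilde{\varphi}_x \widetilde{\varphi}_{x'}] = \mathbb{E}^{\mathcal{G}}_{\theta,U}[\widetilde{\varphi}_x \varphi_{x'}]$ (using that $\widetilde{\varphi}_x \perp \mu_{x'}$, the latter being $\sigma(\varphi_y; y\in K)$-measurable), and then expand $\mathbb{E}^{\mathcal{G}}_{\theta,U}[\widetilde{\varphi}_x \varphi_{x'}] = g_{\theta,U}(x,x') - \sum_{y\in K} P^x_{\theta,U}[H_K<\infty, X_{H_K}=y]\, g_{\theta,U}(y,x')$, which by \eqref{G-GsubK} is precisely $g_{\theta, U\cup K}(x,x')$. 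Hence $\widetilde{\varphi}$ has the covariance \eqref{eq:GFgen} with $U$ replaced by $U\cup K$, as claimed.

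For the density \eqref{eq:phi_K_law}: the law of $(\varphi_x)_{x\in K}$ under $\mathbb{P}^{\mathcal{G}}_\theta$ is the centered Gaussian on $\mathbb{R}^K$ with covariance matrix $(g_\theta(x,y))_{x,y\in K}$, hence has density proportional to $\exp[-\frac12 \langle \varphi, (g_\theta|_{K\times K})^{-1} \varphi\rangle_{\ell^2(K)}]$. So the statement amounts to identifying the inverse of the restricted Green matrix with the trace Dirichlet form operator, i.e. to showing that $\mathcal{E}^{\mathrm{tr}}_K(\varphi,\varphi) = \langle \varphi, (g_\theta|_{K})^{-1}\varphi\rangle_{\ell^2(K)}$ for all $\varphi\in\mathbb{R}^K$. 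I would do this by identifying $(g_\theta|_K)^{-1}$ with the generator of the trace walk on $K$ (the walk watched on $K$, with killing rate $\kappa^{\mathrm{tr}}_x$ and conductances $c^{\mathrm{tr}}_{x,y}$ as in \eqref{eq:trace_cond}--\eqref{eq:trace_kill}): by general theory of traces of Markov chains / Dirichlet forms, the Green function of the trace walk on $K$ coincides with $g_\theta(\cdot,\cdot)$ restricted to $K\times K$, so its inverse is the trace generator, whose associated quadratic form is exactly $\mathcal{E}^{\mathrm{tr}}_K(\cdot,\cdot)$. Alternatively — and this is perhaps cleaner to write — one applies the first part of the lemma with $U = \emptyset$ and $K$ replaced by $\mathcal{G}\setminus K$ (when $\mathcal{G}$ is finite, i.e. a torus; for $\mathbb{Z}^d$ an exhaustion/limiting argument is needed), writing $\varphi = \widetilde\varphi + \mu$ accordingly, and noting that the finite-dimensional density of $\varphi$ factors through this orthogonal decomposition in a way that isolates the $K$-marginal; matching the exponent with \eqref{eq:Trace_Dirichlet} then uses that the trace conductances are precisely the hitting distribution weights appearing in $\mu$.

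The main obstacle I anticipate is the bookkeeping in the identification of the trace Dirichlet form with the inverse restricted Green operator: one must be careful that the ``killing'' term $\kappa^{\mathrm{tr}}_x$ correctly absorbs both the genuine mass $\theta$ and the loss of the walk to $\mathcal{G}\setminus K$, and that no factor (such as the $\frac{1}{2d}$ or the symmetrization $\frac12\sum_{x,y}$) is miscounted. This is a routine but slightly delicate computation with the generator of the trace chain; everything else (Gaussianity, covariance identities via \eqref{G-GsubK}, upgrading zero covariance to independence) is standard. In the write-up I would state the trace-form identity as the crux and verify it by checking $\sum_{y\in K}\mathcal{E}^{\mathrm{tr}}_K(\delta_x, g_\theta(\cdot,y))\varphi_y = \varphi_x$ directly from \eqref{EQ:st_entr_prob} and the last-exit decomposition, i.e. that the trace generator applied to a column of the restricted Green matrix yields the identity.
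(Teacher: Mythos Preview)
Your proposal is correct and in fact more self-contained than what the paper does. The paper's own proof is essentially a citation: for $\mathcal{G}=\mathbb{T}_L$ it invokes Proposition~2.3 of \cite{Sz12b} directly, and for $\mathcal{G}=\mathbb{Z}^d$ it reduces to the finite-volume case via an exhaustion $\Lambda \nearrow \mathbb{Z}^d$ (referring to \cite{RoS13}, Lemma~1.2 for details). No covariance computation or trace-form identification is written out.

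Your direct route---computing $\mathbb{E}[\widetilde\varphi_x\varphi_y]$ and $\mathbb{E}[\widetilde\varphi_x\widetilde\varphi_{x'}]$ via \eqref{G-GsubK}, then identifying $(g_\theta|_{K\times K})^{-1}$ with $I-P^{\mathrm{tr}}$ so that the Gaussian density exponent becomes $\mathcal{E}^{\mathrm{tr}}_K$---is exactly the argument underlying the cited references, and it works cleanly because $K$ is finite (so no exhaustion is needed for the density statement). The only point where you should be slightly careful is the symmetry $c^{\mathrm{tr}}_{x,y}=c^{\mathrm{tr}}_{y,x}$, which you implicitly use when expanding the quadratic form; it holds by path-reversal since the underlying walk is reversible, but is worth stating. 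Your anticipated ``bookkeeping obstacle'' (that $\kappa^{\mathrm{tr}}_x$ correctly absorbs both the mass and the escape probability, and that $\sum_{y\neq x}c^{\mathrm{tr}}_{x,y}+\kappa^{\mathrm{tr}}_x = 1 - P^x_\theta[\widetilde H_K<\infty, X_{\widetilde H_K}=x]$) is exactly the right thing to check, and it goes through.
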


\begin{proof}
For $\mathcal{G}= \mathbb{T}_L$, with $L \geq 1$ (and $\theta >0$), this follows immediately from Proposition 2.3 of \cite{Sz12b}. For $\mathcal{G}= \mathbb{Z}^d$, one first considers the measure $\mathbb{P}^{\, \mathcal{G}}_{\theta, U \cup \Lambda^c}$ instead of $\mathbb{P}^{\, \mathcal{G}}_{\theta, U}$, with $\Lambda$ a large box, to which Proposition 2.3 of \cite{Sz12b} applies, and then lets $\Lambda \nearrow \mathbb{Z}^d$. We refer the Reader to the proof of Lemma 1.2 in \cite{RoS13}, which provides the details for $\theta = 0$ and $U = \emptyset$. The case of positive mass or $U \neq \emptyset$ is completely analogous.
\end{proof}

\begin{remark} \label{Rcond_exps}$(\theta \in [0,1], \, U \subset \mathcal{G})$
\smallskip 

\noindent Using Lemma \ref{L:cond_exps}, one obtains the following choice of regular conditional distributions for $(\varphi_{x})_{x\in\mathcal{G}}$ under $\mathbb{P}_{\theta, U}^{\mathcal{G}}$ conditioned on the variables $(\varphi_{x})_{x\in K}$, for some $K\subset\subset\mathcal{G} \cap U^c$, which will prove very useful in many instances below. Namely, $\mathbb{P}_{\theta, U}^{\mathcal{G}}$-a.s.,
\begin{equation}
\mathbb{P}_{\theta, U}^{\mathcal{G}}[(\varphi_{x})_{x\in\mathcal{G}}\in\cdot\,\vert\,(\varphi_{x})_{x\in K}]=\widetilde{\mathbb{P}}_{\theta, U\cup K}^{\mathcal{G}}[(\widetilde{\varphi}_{x}+\mu_{x})_{x\in\mathcal{G}}\in\cdot\,],\label{EQ:st_phi_cond_exps}
\end{equation}
where the field $(\widetilde{\varphi}_{x})_{x\in\mathcal{G}}$ is independent of $\varphi$ under $\widetilde{\mathbb{P}}_{\theta, U\cup K}^{\mathcal{G}}$ (a copy of $\mathbb{P}_{\theta, U\cup K}^{\mathcal{G}}$), and with $(\mu_{x})_{x\in\mathcal{G}}$ as defined in \eqref{mu}. In particular, with \eqref{EQ:st_phi_cond_exps} at hand, note that
\begin{equation} \label{eq:cond_0}
\mathbb{P}_{\theta, U}^{\mathcal{G}}[\, \cdot\,\vert\,\varphi_{x}=0 ,\, x\in K] \stackrel{\text{law}}{=} {\mathbb{P}}_{\theta, U\cup K}^{\mathcal{G}} [\, \cdot\,].
\end{equation}
\hfill $\square$
\end{remark}

Next, we introduce a specific class of events pertaining to level sets of $\varphi$. Given a height profile $\mathbf{h}= (h_x)_{x \in \mathcal{G}} \in \mathbb{R}^{\mathcal{G}}$, we define the occupation field
\begin{equation}\label{eq:def_conf}
\xi^{\mathbf{h}} \equiv \xi^{\mathbf{h}} (\varphi)= (\xi^{\mathbf{h}}_x)_{x\in \mathcal{G}}, \text{ where } \xi^{\mathbf{h}}_x = 1\{ \varphi_x \geq h_x\}.
\end{equation}
We let $\Omega = \{ 0,1\}$ and endow the space $\Omega^{\mathcal{G}} = \{ 0,1 \}^{\mathcal{G}}$ with its canonical $\sigma$-algebra, and canonical coordinates $Y_x$, $x\in \mathcal{G}$. For any measurable set $A \subset \Omega^{\mathcal{G}}$, we define
\begin{equation}\label{eq:events_levset}
A^{\mathbf{h}} \equiv A^{\mathbf{h}} (\varphi) =  \{\xi^{\mathbf{h}} \in A \} \text{ (part of $\mathbb{R}^{\mathcal{G}}$)}.
\end{equation}
With a slight abuse of notation, if $A\subset \Omega^{K}$ for some $K \subset \mathcal{G}$, we write $A^{\mathbf{h}}$ for the event $\{\xi^{\mathbf{h}}_{\vert_K} \in A \} \, (\subset \mathbb{R}^{\mathcal{G}})$. We will often encounter the following types of crossing events. Given $A,B \subset \mathcal{G}$, the event $\{ A \leftrightarrow B\} \subset \Omega^{\mathcal{G}}$ refers to the existence of a nearest-neighbor path of $1$'s connecting the sets $A$ and $B$, and we let
\begin{equation}\label{eq:events_crossing}
\{ A\stackrel{\geqslant \mathbf{h}}{\longleftrightarrow} B\} =  \{\xi^{\mathbf{h}} \in \{ A \leftrightarrow B\} \}.
\end{equation}
We simply write $A^h$, $\{ A\stackrel{\geq h}{\longleftrightarrow} B\}$, in \eqref{eq:events_levset}, \eqref{eq:events_crossing}, if $h_x = h$ for some $h\in \mathbb{R}$ and all $x \in \mathcal{G}$. Given a configuration $\omega \in \Omega^{\mathcal{G}}$ and $K \subset \subset \mathcal{G}$, we write $\omega^K$ resp. $\omega_K$ for the configuration obtained by replacing $\omega(x)$ by $1$, resp. $0$, for all $x \in K$. We simply write $\omega^x$ and $\omega_x$ when $K =\{ x \}$ is a singleton. Finally, we abbreviate by $\{ \xi^{\mathbf{h}}= \omega \text{ on } K \}$ the event $\{\xi^{\mathbf{h}}(x)= \omega(x) \text{ for all } x\in K \}$.

We proceed by discussing an FKG-type inequality obtained when conditioning on a specific configuration of the level set in some finite region of the lattice, which manifests the positive association inherent to the free field. 
We recall the following definition: given two (probability) measures $\mu, \nu$ on a common partially ordered measure space, $\mu$ is said to \textit{stochastically dominate} $\nu$, written as $\mu \geq_{\text{st.}} \nu$, if  $E^{\mu}[f] \geq E^{\nu}[f]$ for all increasing and integrable 
(with respect to both $\mu$ and $\nu$) functions $f$.

\begin{lem} $(\theta \in [0,1], \, U, K \subset \subset \mathcal{G},\, U \cap K = \emptyset, \, \omega \in \Omega^{\mathcal{G}},  \mathbf{h} \in \mathbb{R}^{\mathcal{G}})$

\label{L:FKG1}
\medskip
\noindent 
\begin{align}
& \mathbb{P}_{\theta, U}^{\mathcal{G}}[\,  \cdot \, | \, \xi^{\mathbf{h}}= \omega \text{ on } K] \leq_{\mathrm{st.}}  \mathbb{P}_{\theta, U}^{\mathcal{G}}[\, \cdot \, | \, \xi^{\mathbf{h}}= \omega^x \text{ on } K], \text{ for all }  \, x \in K \label{FKG1}. 
\end{align}
\end{lem}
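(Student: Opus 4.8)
The plan is to reduce the statement to the classical FKG inequality for the Gaussian free field (holding because the off-diagonal entries of the inverse covariance, i.e. the trace Dirichlet form, are $\le 0$), combined with the conditional decomposition recorded in Lemma \ref{L:cond_exps} and Remark \ref{Rcond_exps}. First I would fix $x\in K$ and condition further on the coordinates $(\varphi_y)_{y\in K\setminus\{x\}}$; by Remark \ref{Rcond_exps} the conditional law of $(\varphi_z)_{z\in\mathcal G}$ given $(\varphi_y)_{y\in K\setminus\{x\}}$ is, up to an additive ($\sigma(\varphi_y;y\in K\setminus\{x\})$-measurable, hence in this step deterministic) shift $\mu_{\cdot}$, the law $\mathbb P^{\mathcal G}_{\theta,U\cup (K\setminus\{x\})}$; this shift does not affect any event of the form $\{\xi^{\mathbf h}=\omega \text{ on }K\setminus\{x\}\}$ nor the monotonicity of functions of $\varphi$, so it can be absorbed by replacing $\mathbf h$ by the shifted profile $(h_z-\mu_z)_z$ and $U$ by $U\cup(K\setminus\{x\})$. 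Thus it suffices to prove the inequality in the special case $K=\{x\}$: namely
\begin{equation}\label{eq:reduced}
\mathbb P^{\mathcal G}_{\theta,U}[\,\cdot\,|\,\varphi_x<h_x]\le_{\mathrm{st.}}\mathbb P^{\mathcal G}_{\theta,U}[\,\cdot\,|\,\varphi_x\ge h_x].
\end{equation}

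To prove \eqref{eq:reduced} I would first establish the (strong) FKG property of $\mathbb P^{\mathcal G}_{\theta,U}$ itself, i.e. that for any increasing bounded measurable $f,g$ on $\mathbb R^{\mathcal G}$ one has $\mathbb E^{\mathcal G}_{\theta,U}[fg]\ge \mathbb E^{\mathcal G}_{\theta,U}[f]\,\mathbb E^{\mathcal G}_{\theta,U}[g]$, and more generally the same with $f,g$ depending monotonically on $(\varphi_z)_{z}$ \emph{and} on a further coordinate. This is the FKG lattice condition / Holley criterion: by Lemma \ref{L:cond_exps}, the density \eqref{eq:phi_K_law} of any finite marginal is $\propto\exp[-\tfrac12\mathcal E^{\mathrm{tr}}_K(\varphi,\varphi)]$, and since the conductances $c^{\mathrm{tr}}_{x,y}$ in \eqref{eq:trace_cond} are nonnegative, the coupling coefficients $\partial^2_{\varphi_x\varphi_y}(\tfrac12\mathcal E^{\mathrm{tr}}_K)=-c^{\mathrm{tr}}_{x,y}\le 0$ for $x\ne y$, which is exactly the (multivariate, continuous) FKG lattice condition; one then invokes the standard result (Holley/Preston, or Pitt's theorem for Gaussian vectors) to deduce positive association on each finite box, and passes to the limit on $\mathbb Z^d$. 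Given this, the event $\{\varphi_x\ge h_x\}$ is increasing, so conditioning on it shifts the law up in the stochastic order: for increasing $f$, writing $p=\mathbb P[\varphi_x\ge h_x]$, the identity $\mathbb E[f\mathbf 1_{\{\varphi_x\ge h_x\}}]\ge p\,\mathbb E[f]$ (FKG with $g=\mathbf 1_{\{\varphi_x\ge h_x\}}$) rearranges to $\mathbb E[f\,|\,\varphi_x\ge h_x]\ge\mathbb E[f]\ge\mathbb E[f\,|\,\varphi_x<h_x]$, which is \eqref{eq:reduced}. Combining this with the reduction of the previous paragraph and averaging over $(\varphi_y)_{y\in K\setminus\{x\}}$ under the appropriate conditional law (monotone conditional laws remain monotone-averaged) yields \eqref{FKG1}.

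The main obstacle I anticipate is purely the bookkeeping in the reduction step: one must check that conditioning on $(\varphi_y)_{y\in K\setminus\{x\}}$ and then on $\{\xi^{\mathbf h}=\omega \text{ on }K\setminus\{x\}\}$ can be interchanged cleanly, that the random shift $\mu_\cdot$ commutes with the stochastic-domination statement (it does, since for each fixed realization of $(\varphi_y)_{y\in K\setminus\{x\}}$ the shift is deterministic and monotone functions compose with translations to monotone functions), and that the family of conditional laws indexed by $(\varphi_y)_{y\in K\setminus\{x\}}$ is itself monotone in an appropriate sense so that the averaged inequality survives. The genuinely analytic input — the FKG lattice condition from $c^{\mathrm{tr}}_{x,y}\ge 0$ — is immediate from \eqref{eq:Trace_Dirichlet}–\eqref{eq:trace_cond} and the classical Gaussian FKG theorem, so I do not expect difficulty there; the proof is essentially an assembly of Lemma \ref{L:cond_exps}, Remark \ref{Rcond_exps}, and a known positive-association result.
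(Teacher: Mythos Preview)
Your identification of the key analytic input --- the FKG lattice condition for the free field coming from $c^{\mathrm{tr}}_{x,y} \geq 0$, together with a Holley--Preston criterion --- is correct, and is precisely the tool the paper uses. However, the ``reduction to $K=\{x\}$'' by conditioning on $\psi=(\varphi_y)_{y\in K\setminus\{x\}}$ has a genuine gap in the averaging step, and this gap is not bookkeeping: it is where the content of the lemma actually lives.

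Concretely, having established for each fixed realization of $\psi$ that $\mathbb{P}[\,\cdot\mid\psi,\,\varphi_x<h_x]\le_{\mathrm{st.}}\mathbb{P}[\,\cdot\mid\psi,\,\varphi_x\ge h_x]$, you must integrate $\psi$ out against \emph{two different} mixing laws $\nu^\pm(\cdot)=\mathbb{P}\big[\psi\in\cdot\,\big|\,\xi^{\mathbf h}=\omega\text{ on }K\setminus\{x\},\,\varphi_x\gtrless h_x\big]$. A pointwise stochastic inequality between families does not pass to mixtures unless the mixing measures coincide; here they do not, and to compensate you would need both $\nu^-\le_{\mathrm{st.}}\nu^+$ and that $\psi\mapsto\mathbb{E}[f\mid\psi,\,\varphi_x\ge h_x]$ is increasing for every increasing $f$. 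The second of these is not obvious: raising $\psi$ raises the shift $\mu(\psi)$ (which helps) but simultaneously \emph{lowers} the conditioning threshold $h_x-\mu_x(\psi)$ (which hurts, since by FKG a higher threshold pushes the conditioned law up), and the two effects compete. The clean way to resolve this is to show that the measure $\mathbb{P}_{\theta,U}^{\mathcal G}[\,\cdot\mid\xi^{\mathbf h}=\omega\text{ on }K\setminus\{x\}]$ itself satisfies the FKG lattice condition --- which it does, because the conditioning set is a product of half-lines, hence a sublattice, and restriction to a sublattice preserves log-supermodularity of the density. But once you have this, the desired conclusion follows directly from Holley's criterion applied to the two conditioned measures, with no need for the detour through $\psi$ at all.

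This is exactly the paper's route: it writes the two target measures as finite-volume Gibbs measures with densities proportional to $e^{-H^\pm}$ (the half-line constraints being implemented by soft one-body potentials $V_\pm$, sent to a hard limit, so as to stay within the hypotheses of Preston's continuous Holley theorem), and verifies the Holley inequality $H^+(\varphi\vee\varphi')+H^-(\varphi\wedge\varphi')\le H^+(\varphi)+H^-(\varphi')$ term by term --- the interaction terms using $c^{\mathrm{tr}}_{x,y}\ge 0$, the common one-body terms trivially, and the distinguished term at $x$ using only that $V_+$ is decreasing and $V_-$ increasing. Your proposal contains the right ingredient but routes it through a decomposition that does not actually simplify the problem; the paper applies Holley--Preston directly to the two conditioned measures.
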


\begin{proof}
We consider the case $\mathcal{G}= \mathbb{Z}^d$, which is slightly more involved. 
The necessary small alterations needed in the periodic case are indicated at the end. For the sake of clarity, we omit $\mathcal{G}$ and $\theta$ from the notation. First, we reduce \eqref{FKG1} to a similar statement involving (finite-volume) Gibbs measures. To this end, let $n_0 \geq 1$ be large enough so that $B(0,n_0) \supset  K$ and define $U_n = U \cup B(0,n)^c$, for $n \geq n_0$. It is easy to see, using dominated convergence, that $\mathbb{P}_{U_n} \stackrel{w}{\to} \mathbb{P}_U$ as $n \to \infty$. Let us abbreviate $\mathbb{P}_n^+ =  \mathbb{P}_{U_n}[\,  \cdot \, | \, \xi^{\mathbf{h}}= \omega^x \text{ on } K]$ and $\mathbb{P}_n^- =  \mathbb{P}_{U_n}[\,  \cdot \, | \, \xi^{\mathbf{h}}= \omega_x \text{ on } K]$. Since stochastic domination is preserved under weak limits, it suffices to prove that $\mathbb{P}_n^- \leq_{\text{st.}} \mathbb{P}_n^+$, for all $n \geq n_0$. Define the potentials $V_{\pm}: \mathbb{R} \to \mathbb{R}$ as
$$
V_+ (t) = |t| \cdot 1\{ t< 0\}, \ V_-(t) = V_+(-t), \text{ $t\in \mathbb{R}$}.
$$
Note that
\begin{equation}\label{eq:pot}
\text{$V_+ (\cdot)$ is decreasing, continuous 
and $\lim_{\lambda \to \infty } \lambda \cdot V_+(t) = \infty \cdot 1\{t<0\}$, for all $t\in\mathbb{R}$}
\end{equation}
(in fact, any function $V_+$ with these properties would work). Let $\omega$ be fixed, and $\mathscr{C}_K = \mathscr{C}_K(\omega)$ (resp. $\mathscr{O}_K$) denote the closed (resp. open) sites of $K$ in the configuration $\omega$. We may assume that $x$ appearing in \eqref{FKG1} belongs to $\mathscr{C}_K$, else there is nothing to prove. We consider the Hamiltonians
\begin{equation} \label{eq:Hams}
\begin{split}
&H^{\pm}_{n,\lambda}(\varphi) = \mathcal{E}^{\text{tr}}_{V_n}(\varphi,\varphi) + \lambda \Big[ \sum_{y\in \mathscr{O}_K} V_+(\varphi_y - h_y) +  \sum_{z\in \mathscr{C}_K\setminus\{x\}} V_-(\varphi_z - h_z)+ V_{\pm}(\varphi_x - h_x)\Big] ,
\end{split}
\end{equation}
for $\varphi \in \mathbb{R}^{V_n}$, $V_n = U_n^c$, and $\lambda >0$, where $\mathcal{E}^{\text{tr}}_{V_n}(\cdot,\cdot)$ denotes the trace Dirichlet form, cf. \eqref{eq:Trace_Dirichlet}. We define the (Gibbs) measures
\begin{equation}\label{eq:gibbs_m}
d\mathbb{P}_{n,\lambda}^{\pm} \stackrel{\text{def.}}{=}\frac{1}{Z^{\pm}_{n,\lambda}} \exp \Big[ - \frac{1}{2} H^{\pm}_{n,\lambda}(\varphi) \Big] dl (\varphi), \text{ for $\lambda > 0, \, n \geq n_0$},
\end{equation}
where $Z^{\pm}_{n,\lambda} = \int_{\mathbb{R}^{V_n}} \exp[  - \frac12 H^{\pm}_{n,\lambda}(\varphi) ] dl (\varphi)$ is a (finite) normalizing constant and $l$ denotes Lebesgue measure on $\mathbb{R}^{V_n}$. It is then easy to show, using \eqref{eq:phi_K_law}, \eqref{eq:pot} and \eqref{eq:Hams}, that $\mathbb{P}_{n,\lambda}^{\pm} \stackrel{w}{\rightarrow} \mathbb{P}_n^{\pm}$  as $\lambda \to \infty$, for all $n \geq n_0$. Hence, the proof of \eqref{FKG1} reduces to showing that
\begin{equation}\label{eq:FKG_red}
\mathbb{P}_{n,\lambda}^{-}  \leq_{\text{st.}} \mathbb{P}_{n,\lambda}^{+}, \text{ for all $\lambda > 0$ and $n \geq n_0$.}
\end{equation}
A classical result of Holley \cite{Ho74} (in fact, we use its generalization to continuous distributions by Preston, see \cite{Pr74}, Theorem 3) applied to the measures $\mathbb{P}_{n,\lambda}^{\pm}$ yields, by means of  \eqref{eq:gibbs_m}, that in order to prove \eqref{eq:FKG_red}, it suffices to show 
\begin{equation} \label{eq:FKG_red2}
H^{+}_{n,\lambda}(\varphi \vee \varphi') + H^{-}_{n,\lambda}(\varphi \wedge \varphi')\leq H^{+}_{n,\lambda}(\varphi ) + H^{-}_{n,\lambda}(\varphi'),
\end{equation}
for all $\lambda > 0$, $n \geq n_0$ and $\varphi, \varphi' \in \mathbb{R}^{V_n}$. One verifies \eqref{eq:FKG_red2} by expanding all the gradient terms $(\varphi_y - \varphi_z)^2$, with $y \neq z \in V_n$ in the Dirichlet form $ \mathcal{E}^{\text{tr}}_{V_n}(\varphi,\varphi)$, cf. \eqref{eq:Trace_Dirichlet}, and checking \eqref{eq:FKG_red2} for each summand appearing in $H^{\pm}_{n,\lambda}(\varphi)$ \textit{individually} as follows: for the interaction terms proportional to $\varphi_y\varphi_z$, $y \neq z$, one applies the elementary inequality, valid for all $a,a' b, b' \in \mathbb{R}$,
$$
(a \vee a')(b\vee b') + (a \wedge a')(b\wedge b') \geq ab + a'b'
$$
(for $a>a'$ and $b>b'$, equality holds, and for $a>a'$ and $b<b'$, the desired inequality can be recast as $(a-a')(b'-b)\geq 0$, which is indeed true; the remaining cases follow by symmetry). All the remaining terms in \eqref{eq:Hams} except for $V_{\pm}(\varphi_x - h_x)$ are common to both $H^{+}_{n,\lambda}$ and $H^{-}_{n,\lambda}$ and involve only \textit{a single} field variable $\varphi_y$ $y \in V_n$, each. Trivially, $f(a \vee b) + f(a \wedge b)= f(a)+ f(b)$ for all $f: \mathbb{R} \to \mathbb{R}$ and $a,b \in \mathbb{R}$, hence the presence of these terms is irrelevant for \eqref{eq:FKG_red2} to hold.
Finally, we need to check that
\begin{equation}\label{eq:FKG_checkV_+}
 V_{+}((\varphi_x \vee \varphi_x')- h_x) + V_{-}((\varphi_x \wedge \varphi_x')- h_x) \leq V_{+}(\varphi_x - h_x) + V_{-}( \varphi_x'- h_x) 
\end{equation}
holds for all $\varphi_x, \varphi_x', h_x \in \mathbb{R}$. But \eqref{eq:FKG_checkV_+} holds with equality whenever $\varphi_x \geq \varphi_x'$. On the other hand, if $\varphi_x < \varphi_x'$, then the left-hand side of \eqref{eq:FKG_checkV_+} reads $V_{+}(\varphi_x' - h_x) + V_{-}( \varphi_x- h_x)$, and the inequality \eqref{eq:FKG_checkV_+} follows because $V_{+}(\cdot - h_x)$ is decreasing and $V_{-}(\cdot - h_x)$ increasing, cf. \eqref{eq:pot}. This completes the proof of \eqref{eq:FKG_red2}, and thus of \eqref{FKG1}. Finally, the proof of \eqref{FKG1} for periodic boundary conditions is completely analogous, but somewhat simpler, since the first reduction to a finite-volume measure can be dispensed with (i.e. one works directly with $\mathcal{E}^{\text{tr}}_{U^c}(\cdot,\cdot)$, for $U \subset \mathbb{T}_L$, $L \geq 1$, in \eqref{eq:Hams}; no limit $n \to \infty$ is needed). This concludes the proof Lemma \ref{L:FKG1}.
\end{proof}

\begin{remark} \label{R:FKG_lattice_cond}  (FKG lattice condition)

\medskip
\noindent
The inequality \eqref{FKG1} has an important corollary. Let $L \geq 1$, $\Lambda_L = B(0,L)$ if $\mathcal{G}= \mathbb{Z}^d$ or $\Lambda_L = \mathcal{G} = \mathbb{T}_L$ in the case of periodic boundary conditions. Then \eqref{FKG1} implies in particular that $Q^{\mathbf{h}}_{\theta,L}$, the law of $(\xi^{\mathbf{h}}_x)_{x \in \Lambda_L}$ under $\mathbb{P}^{\mathcal{G}}_{\theta}$, has the following \textit{monotonicity property}: for any $x\in \Lambda_L$, the map $\Omega^{\Lambda_L \setminus \{ x\}} \to [0,1]$, $\omega \mapsto Q^{\mathbf{h}}_{\theta,L}(Y_x =1 \, | \, Y_z = \omega(z), \, z \in \Lambda_L \setminus \{ x\})$ is increasing. By a classical result, see for instance \cite{GG06}, Theorem 2.1, this is equivalent to saying that the measure $Q^{\mathbf{h}}_{\theta,L}$ satisfies the so-called \textit{FKG lattice condition}, i.e.
\begin{equation} \label{EQ:FKG_lattice_cond}
Q^{\mathbf{h}}_{\theta,L}(\omega\vee\omega')\cdot Q^{\mathbf{h}}_{\theta,L}(\omega\wedge\omega')\geq Q^{\mathbf{h}}_{\theta,L}(\omega)\cdot Q^{\mathbf{h}}_{\theta,L}(\omega'), \text{ for all } \omega, \omega' \in \Omega^{\Lambda_L}
\end{equation}
(given two configurations $\omega,\omega'\in\Omega^{K}$, $K \subset \mathcal{G}$, we define
$\omega\vee\omega'\in \Omega^{K} $ by $(\omega\vee\omega')(y)=\max\{\omega(y),\,\omega'(y)\}$, for $y\in K$, and similarly $\omega\wedge\omega'$ with $\max$ replaced
by $\min$). \hfill $\square$
\end{remark}

\section{Phase transition and a Margulis-Russo-type formula}\label{S:PT_RUSSO}

In this short section, we first collect certain properties concerning the critical parameters $h_{*}$ and $h_{**}$ mentioned in the Introduction, which are all essentially known (but seemingly not written anywhere) and hint at their proofs. We then give an explicit formula for the derivative of the probability of a generic (finite dimensional) event $A^h$ with respect to $h$. Without further ado, we begin with the following

\begin{thm}\label{T:h_**_properties} (Non-trivial strongly subcritical regime) $(d \geq 3, \, \theta \in (0,1))$

\medskip
\noindent The critical parameter $h_{**}(\theta)$ defined in \eqref{eq:h_**} satisfies 
\begin{align}
& h_{**} (\theta,d) < \infty, \text{ for all $d\geq 3$ and $\theta \in[0,1]$,}\label{NON} \\
& p_{\theta, L}(h) \leq c(\theta, h) e^{- c'(\theta,h) \cdot L^{\rho(\theta,h)}}, \text{ for some $\rho(\theta,h) \in (0,1]$, all $h> h_{**}$ and $L \geq 1$.} \label{NON1}
\end{align}
Moreover, there exists a constant $\gamma = \gamma (\theta,d) >0$ such that
\begin{equation}\label{eq:h_**bis}
h_{**}(\theta,d)=  \inf \big\{ h \in \mathbb{R} \, ; \,  \liminf_{L\to \infty}  p_{\theta, L}(h) < \gamma (\theta,d) \big\}.
\end{equation}
\end{thm}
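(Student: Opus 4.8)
\medskip
\noindent\textit{Proof plan.} The idea is to reduce all three assertions to a single renormalisation estimate, following the scheme of \cite{RoS13} with the simplifications permitted by $\theta>0$. The first step is to prove the following \textbf{bootstrap lemma}: there exist $L_* = L_*(\theta,d) \geq 1$, $\rho = \rho(\theta,d) \in (0,1]$ and $\gamma = \gamma(\theta,d) \in (0,1)$ such that, for every $h \in \mathbb{R}$,
\[
\text{if } \ p_{\theta, L_0}(h) < \gamma \ \text{ for some } L_0 \geq L_*, \ \text{ then } \ p_{\theta, L}(h) \leq c(\theta,h)\, e^{-c'(\theta,h)\, L^{\rho}} \ \text{ for all } L \geq 1 ,
\]
with $c, c' >0$; the constant $\gamma$ will be the one appearing in \eqref{eq:h_**bis}.

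Granting this, the theorem follows from soft considerations. For \eqref{NON} (in the range $\theta \in (0,1)$; the endpoints $\theta \in \{0,1\}$ being covered respectively by \cite{RoS13} and \cite{Gr99}) I would supply a ``seed'' bound at all large scales using only the exponential decay of correlations \eqref{eq:st_decay_cov}. Since $C_\theta := \sum_{z\in\mathbb{Z}^d} g_\theta(0,z) = \sum_{n\geq0}(1-\theta)^n = \theta^{-1} < \infty$, every self-avoiding path $\pi$ satisfies $\mathbb{E}_\theta\big[\big(\sum_{x\in\pi}\varphi_x\big)^2\big] = \sum_{x,y\in\pi}g_\theta(x,y) \leq |\pi|\, C_\theta$, whence $\mathbb{P}_\theta[\varphi_x \geq h \ \forall x\in\pi] \leq e^{-|\pi|\, h^2/(2C_\theta)}$ for $h\geq0$ by a Gaussian tail estimate. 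As $\{B(0,L)\stackrel{\geqslant h}{\longleftrightarrow}S(0,2L)\}$ forces a self-avoiding path of length $\geq L$ in $E^{\geq h}$ joining $B(0,L)$ to $S(0,2L)$, a union bound over the at most $c\,L^d\,(2d)^{|\pi|}$ such paths gives
\[
p_{\theta, L}(h) \leq c\, L^d \big( 2d\, e^{-h^2/(2C_\theta)} \big)^L , \qquad L \geq 1 ,
\]
so that $L^\alpha p_{\theta, L}(h) \to 0$ for every $\alpha>0$ once $h^2 > 2C_\theta \log(2d)$, proving $h_{**}(\theta)<\infty$. For \eqref{NON1}: if $h>h_{**}(\theta)$ then $p_{\theta,L}(h)\to0$ by \eqref{eq:h_**}, hence $p_{\theta,L_0}(h)<\gamma$ for all $L_0$ large, and the bootstrap lemma gives the claim. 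For \eqref{eq:h_**bis}: writing $\widetilde h := \inf\{h : \liminf_{L\to\infty} p_{\theta,L}(h)<\gamma\}$, one has $\widetilde h \leq h_{**}$ since $h>h_{**}$ forces $p_{\theta,L}(h)\to0<\gamma$; conversely, for $h>\widetilde h$ choose $h'\in(\widetilde h,h)$ with $\liminf_L p_{\theta,L}(h')<\gamma$, so $p_{\theta,L_0}(h')<\gamma$ for arbitrarily large $L_0$, and the bootstrap lemma together with the monotonicity of $p_{\theta,L}(\cdot)$ gives $p_{\theta,L}(h)\leq p_{\theta,L}(h')\leq c\,e^{-c'L^\rho}$, so $h\geq h_{**}$; hence $\widetilde h = h_{**}$.

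It remains to prove the bootstrap lemma, which is the substance, and which I would carry out via the multiscale renormalisation of \cite{RoS13} (see also \cite{Sz10}, \cite{PR13}). Along a geometric sequence $L_n = \ell_0^{\,n} L_0$, with $\ell_0$ a large dimensional constant, a deterministic combinatorial fact shows that a crossing of the annulus around a box of radius $L_{n+1}$ entails two crossings of annuli around well-separated (distance $\gtrsim L_n$) sub-boxes of radius $L_n$; bounding the number of admissible pairs by $c\,\ell_0^{\,c}$ and applying a \textbf{decoupling inequality} to the two (disjointly supported, far apart) crossing events produces the recursion
\[
q_{n+1} \leq c\,\ell_0^{\,c}\, q_n^{\,2} + e^{-c''\, L_n} , \qquad q_n := p_{\theta, L_n}(h) .
\]
Choosing $\gamma$ and $L_*$ so that $c\,\ell_0^{\,c}\gamma \leq \tfrac12$ and $e^{-c'' L_*} \ll \gamma^2$, the quadratic term dominates and a standard iteration yields $q_n \leq e^{-c' L_n^{\rho}}$ for all $n$, with $\rho = \log 2/\log\ell_0 \in (0,1)$; interpolating between consecutive scales (a crossing at scale $L\in[L_n,L_{n+1}]$ contains a scale-$L_n$ crossing for one of boundedly many translated boxes) then upgrades this to the bound for all $L\geq1$.

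The main obstacle is the decoupling inequality, but here the \emph{massive} regime simplifies the picture decisively relative to \cite{RoS13}. Conditioning $\varphi$ on a ``separating shell'' $K$ surrounding a box $B$, Lemma \ref{L:cond_exps} writes $\varphi_x = \widetilde\varphi_x + \mu_x$ on $B$, with $\widetilde\varphi$ an independent free field and $\mu_x = E^x_\theta[H_K<\infty,\varphi_{X_{H_K}}]$; by \eqref{eq:st_decay_cov} and \eqref{EQ:st_entr_prob}, $P^x_\theta[H_K<\infty]$ decays exponentially in $\mathrm{dist}(x,K)$, so the harmonic part $\mu$ is \emph{uniformly exponentially small} in the separation. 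The perturbation $\mu$ can therefore be absorbed into an infinitesimal shift of the level $h$ plus an error of order $e^{-c\,\mathrm{dist}}$ --- using a Gaussian anti-concentration bound to the effect that level sets at nearby heights have comparable probabilities --- so that \emph{no} sprinkled decoupling \`a la \cite{RoS13}, \cite{PR13} is needed. The remaining work is bookkeeping: checking that the combinatorial factor $c\,\ell_0^{\,c}$, the decoupling errors and the accumulated level-shifts stay controlled uniformly in $n$ and in $h$ (over the relevant range), so that $\gamma$, $\rho$, $L_*$ depend only on $\theta$ and $d$; this is routine given \eqref{eq:st_decay_cov}.
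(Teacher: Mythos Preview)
Your proposal is correct and follows essentially the same route the paper defers to: the paper's own ``proof'' consists of the single sentence that the results follow by mimicking \cite{RoS13} and \cite{PR13} with obvious simplifications, and what you have written is a faithful and accurate sketch of precisely that mimicking. Your explicit Peierls seed for \eqref{NON} via $\sum_z g_\theta(0,z)=\theta^{-1}<\infty$ and your observation that the exponentially small conditional mean $\mu$ lets one replace sprinkling by an anti-concentration error (the very device the paper later uses in Lemma~\ref{L:change_bc}) are exactly the ``simplifications'' the paper alludes to.
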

\begin{proof}
These results follow by mimicking the proofs of Theorem 2.6 in \cite{RoS13} and Theorem 2.1 in \cite{PR13} (both deal with the more difficult massless case), with obvious changes. Following the line of argument of \cite{PR13}, one can actually choose $\rho=1$ in \eqref{NON1} whenever $d \geq 4$.
\end{proof}

Next, we derive a particular formula for the derivative of the probabilities of certain events with respect to the height parameter $h \in \mathbb{R}$, which will prove useful when attempting a comparison with the corresponding conditional influences in the next section. For $h \in \mathbb{R}$ and $A \subset \subset\{0,1\}^{K}$, $K \subset \mathcal{G}$ a measurable subset, recall the notation $A^h$ from \eqref{eq:events_levset} and the subsequent discussion.

\begin{proposition}
\label{P:Russo_formula} $(\theta\in [0,1],\, h\in\mathbb{R},\, K \subset \subset \mathcal{G}, \, A\subset \Omega^{K})$
 \begin{equation}\label{eq:Russo_formula}
-\frac{d \,  \mathbb{P}_{\theta}^{\mathcal{G}}[A^{h}]}{dh}= \mathbb{E}_{\theta}^{\mathcal{G}} \Big[1_{A^{h}}(\varphi)\cdot\Big(\sum_{x\in K} \kappa^{\text{tr}}_{x}\varphi_{x}\Big)\Big]
\end{equation}
 \end{proposition}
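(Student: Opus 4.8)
The plan is to compute the derivative directly from the explicit density \eqref{eq:phi_K_law} of the vector $(\varphi_x)_{x\in K}$ under $\mathbb{P}_\theta^{\mathcal{G}}$, since $A^h$ depends only on these finitely many coordinates. Write $A^h = \{\varphi_{|_K} \in A^h_K\}$ where $A^h_K = \{\psi \in \mathbb{R}^K : (1\{\psi_x \geq h\})_{x\in K} \in A\}$. Because $A$ is a subset of $\{0,1\}^K$, the region $A^h_K$ is (up to a Lebesgue-null boundary) a finite disjoint union of ``boxes'' of the form $\prod_{x\in K}(h,\infty)$ or $\prod_{x\in K}(-\infty,h)$ on the various coordinates; translating $h \mapsto h+dh$ simply translates each such box diagonally by $(dh,\dots,dh)$. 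So
\begin{equation*}
\mathbb{P}_{\theta}^{\mathcal{G}}[A^{h}] = \frac{1}{Z}\int_{A^h_K} \exp\Big[-\tfrac12 \mathcal{E}^{\mathrm{tr}}_K(\psi,\psi)\Big]\prod_{x\in K} d\lambda(\psi_x),
\end{equation*}
and substituting $\psi = \chi + h\mathbf{1}$ (with $\mathbf{1} = (1,\dots,1)$) converts the $h$-dependence out of the domain: $\mathbb{P}_{\theta}^{\mathcal{G}}[A^{h}] = \frac{1}{Z}\int_{A^0_K} \exp[-\tfrac12 \mathcal{E}^{\mathrm{tr}}_K(\chi + h\mathbf{1}, \chi + h\mathbf{1})]\prod_x d\lambda(\chi_x)$. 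Now differentiate under the integral sign (justified by the Gaussian tail and the locally bounded $h$-dependence of the integrand) to get
\begin{equation*}
\frac{d}{dh}\mathbb{P}_{\theta}^{\mathcal{G}}[A^{h}] = -\frac{1}{Z}\int_{A^0_K} \frac{d}{dh}\Big[\tfrac12 \mathcal{E}^{\mathrm{tr}}_K(\chi + h\mathbf{1}, \chi + h\mathbf{1})\Big]\, e^{-\frac12 \mathcal{E}^{\mathrm{tr}}_K(\chi + h\mathbf{1}, \chi+h\mathbf{1})}\prod_x d\lambda(\chi_x).
\end{equation*}

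The next step is to evaluate $\frac{d}{dh}\big[\tfrac12 \mathcal{E}^{\mathrm{tr}}_K(\psi + h\mathbf{1}, \psi+h\mathbf{1})\big] = \langle \mathbf{1}, L^{\mathrm{tr}}_K(\psi+h\mathbf{1})\rangle$ where $L^{\mathrm{tr}}_K$ is the generator associated with the trace Dirichlet form \eqref{eq:Trace_Dirichlet}. The crucial point is that the gradient part $\tfrac12\sum_{x,y\in K} c^{\mathrm{tr}}_{x,y}(f(y)-f(x))^2$ is invariant under adding a constant to $f$, so it contributes nothing to the derivative; only the killing part $\sum_{x\in K}\kappa^{\mathrm{tr}}_x f(x)^2$ survives, and $\frac{d}{dh}\sum_{x}\kappa^{\mathrm{tr}}_x(\psi_x+h)^2 = 2\sum_x \kappa^{\mathrm{tr}}_x(\psi_x + h)$, so $\frac{d}{dh}\big[\tfrac12 \mathcal{E}^{\mathrm{tr}}_K(\psi+h\mathbf{1},\cdot)\big] = \sum_{x\in K}\kappa^{\mathrm{tr}}_x(\psi_x + h)$. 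Undoing the substitution $\chi + h\mathbf{1} = \psi = \varphi_{|_K}$ (i.e. changing variables back), this reads $\sum_{x\in K}\kappa^{\mathrm{tr}}_x\varphi_x$, giving exactly
\begin{equation*}
-\frac{d}{dh}\mathbb{P}_{\theta}^{\mathcal{G}}[A^{h}] = \frac{1}{Z}\int_{A^h_K}\Big(\sum_{x\in K}\kappa^{\mathrm{tr}}_x \psi_x\Big) e^{-\frac12 \mathcal{E}^{\mathrm{tr}}_K(\psi,\psi)}\prod_x d\lambda(\psi_x) = \mathbb{E}_{\theta}^{\mathcal{G}}\Big[1_{A^h}(\varphi)\cdot \sum_{x\in K}\kappa^{\mathrm{tr}}_x\varphi_x\Big],
\end{equation*}
which is \eqref{eq:Russo_formula}.

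The main technical obstacle is the rigorous justification of differentiation under the integral sign, since the domain $A^0_K$ after the change of variables is an unbounded union of orthants and the integrand, while Gaussian-decaying in $\chi$, also depends on $h$ through a quadratic that grows; one needs a dominated-convergence argument valid uniformly for $h$ in a neighborhood of the point of interest, using that $\mathcal{E}^{\mathrm{tr}}_K(\chi + h\mathbf{1},\cdot)$ is bounded below by a positive-definite quadratic in $\chi$ (for $h$ bounded) since $\mathcal{E}^{\mathrm{tr}}_K$ has a nontrivial killing part whenever $\theta > 0$ (and, when $\theta = 0$ on $\mathbb{Z}^d$, $\mathcal{E}^{\mathrm{tr}}_K$ is still the Dirichlet form of a transient walk, hence positive-definite on $\mathbb{R}^K$). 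A secondary, essentially cosmetic point is handling the Lebesgue-null boundary $\{\psi_x = h \text{ for some } x\}$ of $A^h_K$, which has no effect on the integral; one may either note $A$ is arbitrary so only the (a.e. well-defined) indicator matters, or smooth the indicator and pass to the limit. Everything else is a routine computation with the explicit density from Lemma \ref{L:cond_exps}.
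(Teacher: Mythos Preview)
Your proof is correct and follows essentially the same route as the paper: start from the explicit density \eqref{eq:phi_K_law}, shift variables by $h\mathbf{1}$ to move the $h$-dependence from the indicator into the exponential, differentiate, and observe that only the killing part of the trace Dirichlet form survives. The paper reaches this last point by expanding $\mathcal{E}^{\mathrm{tr}}_K(\psi+\mathbf{h},\psi+\mathbf{h})$ bilinearly, whereas you note directly that the gradient part is invariant under constant shifts; the two computations are equivalent.
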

\begin{proof}
Using \eqref{eq:phi_K_law}, one can write
\[
\mathbb{P}_{\theta}^{\mathcal{G}}[A^{h}]=Z^{-1}\int_{\mathbb{R}^{K}}1_{A^{h}}(\varphi)\cdot\exp\Big[-\frac12 \mathcal{E}^{\text{tr}}_K(\varphi,\varphi)\Big]\prod_{x\in K}d\lambda(\varphi_{x}),
\]
where $Z= \int _{\mathbb{R}^{K}} \exp [-\frac12 \mathcal{E}^{\text{tr}}_K(\varphi,\varphi)]\prod_{x\in K}d\lambda(\varphi_{x})$. Substituting $\psi_{x}=\varphi_{x}-h$, for all $x\in K$, and expanding $\mathcal{E}^{\text{tr}}_K(\varphi,\varphi)=\mathcal{E}^{\text{tr}}_K(\psi+\mathbf{h},\psi+\mathbf{h})=\mathcal{E}^{\text{tr}}_K(\psi,\psi)+2\mathcal{E}^{\text{tr}}_K(\psi,\mathbf{h})+\mathcal{E}^{\text{tr}}_K(\mathbf{h},\mathbf{h})$, where $\mathbf{h}$ denotes the field indexed by $K$ with constant value $h$ everywhere, one obtains
\[
\mathbb{P}_{\theta}^{\mathcal{G}}[A^{h}]=Z^{-1}\int_{\mathbb{R}^{K}}1_{A^{0}}(\psi)\cdot \exp\Big[-\mathcal{E}^{\text{tr}}_K(\psi,\mathbf{h})- \frac12 \mathcal{E}^{\text{tr}}_K(\mathbf{h},\mathbf{h})\Big]\cdot \exp\Big[-\frac12 \mathcal{E}^{\text{tr}}_K(\psi,\psi)\Big]\prod_{x\in K}d\lambda(\psi_{x})
\]
(N.B.: one may view this as a kind of (elementary) Cameron-Martin formula, cf. \cite{KS91}, p.190), and thus
\begin{align*}
-\frac{d \, \mathbb{P}_{\theta}^{\mathcal{G}}[A^{h}]}{dh}=Z^{-1}\int_{\mathbb{R}^{K}} &1_{A^{0}}(\psi)\cdot\Big[\, \frac{d}{dh}\mathcal{E}^{\text{tr}}_K(\psi,\mathbf{h})+\frac12 \frac{d}{dh}\mathcal{E}^{\text{tr}}_K(\mathbf{h},\mathbf{h})\Big]\\
&\times \exp\Big[-\frac12 \mathcal{E}^{\text{tr}}_K(\psi+\mathbf{h},\psi+\mathbf{h})\Big]\prod_{x\in K}d\lambda(\psi_{x}).
\end{align*}
Recall that for $f,g: K \to \mathbb{R}$,
\begin{equation*}
\mathcal{E}^{\text{tr}}_K(f,g)= \frac{1}{2} \sum_{ x,y \in K} c^{\text{tr}}_{x,y} (f(y)-f(x))(g(y)-g(x))+ \sum_{x \in K} \kappa^{\text{tr}}_{x}f(x)g(x),
\end{equation*}
which is obtained from \eqref{eq:Trace_Dirichlet} by polarization. In particular, this yields $\mathcal{E}^{\text{tr}}_K(\psi,\mathbf{h})=h\sum_{x\in K}\kappa^{\text{tr}}_{x} \psi_{x}$
and similarly $\mathcal{E}^{\text{tr}}_K(\mathbf{h},\mathbf{h})=h^2 \sum_{x\in K}\kappa^{\text{tr}}_{x}$.
It follows that
\begin{align*}
-\frac{d \, \mathbb{P}_{\theta}^{\mathcal{G}}[A^{h}]}{dh}
&=Z^{-1}\int_{\mathbb{R}^{K}}1_{A^{0}}(\psi)\cdot\Big[\sum_{x\in K}\kappa^{\text{tr}}_{x}(\psi_{x}+h)\Big] \cdot\exp\Big[-\frac12 \mathcal{E}^{\text{tr}}_K(\psi+\mathbf{h},\psi+\mathbf{h})\Big]\prod_{x\in K}d\lambda(\psi_{x}) \\
 &=\mathbb{E}_{\theta}^{\mathcal{G}} \Big[1_{A^{h}}(\varphi)\cdot\Big(\sum_{x\in K} \kappa^{\text{tr}}_{x}\varphi_{x}\Big)\Big],
\end{align*}
which completes the proof.
\end{proof}

\begin{remark}\label{R:deriv_comments}
1) In case $A$ is an increasing event, the function $h \mapsto \mathbb{P}_{\theta}^{\mathcal{G}}[A^{h}]$ is decreasing, and the right-hand side of \eqref{eq:Russo_formula}, which can be rewritten as $ \text{Cov}^{\mathbb{P}_{\theta}^{\mathcal{G}}} \big(1_{A^{h}}(\varphi), \, \sum_{x\in K} \kappa^{\text{tr}}_{x}\varphi_{x}\big)$ is indeed non-negative by the FKG-inequality.

\medskip
\noindent 2) In the limiting regime $\theta = 1$, in which $\mathbb{P}_{\theta}^{\mathcal{G}}$ is simply a product measure, one recovers a classical differential formula of Margulis \cite{Ma74} and Russo \cite{Ru81}; see also \cite{Gr99}, Theorem (2.25). To see this, first observe that, when $\theta =1$, $\kappa^{\text{tr}}_{x} = 1$ for all $x \in K$, cf. \eqref{eq:trace_kill}. For increasing $A$, denote by $\text{Piv}_x(A)= \{\omega \in \Omega^K; \, 1_A(\omega^x) \neq 1_A(\omega_x) \}$ the event that $x$ is pivotal for $A$, and let $\text{Piv}_x(A^h)= \{ \xi^h \in \text{Piv}_x(A) \}$, cf. below \eqref{eq:events_crossing} for notation, which is measurable with respect to the $\sigma$-algebra generated by $\psi \equiv (\varphi_y)_{y\in K \setminus \{ x\}}$, and thus independent of $\varphi_x$. Note that the same holds for $A^h \backslash \text{Piv}_x(A^h)$ (indeed $A \cap (\text{Piv}_x(A))^c= \{ \omega;\, 1_A(\omega_x)=1 \}$). Moreover, since $A$ is increasing, $A \cap \text{Piv}_x(A) = \{ \omega(x)=1\} \cap \text{Piv}_x(A)$, for all $x \in K$. Hence,
\begin{align*}
 \mathbb{E}_{1}^{\mathcal{G}} [1_{A^{h}}(\varphi)\cdot \varphi_{x}]
&=  \mathbb{E}_{1}^{\mathcal{G}} [ \, \mathbb{E}_{1}^{\mathcal{G}}[ (1_{\text{Piv}_x(A^h)}+ 1_{\text{Piv}_x(A^h)^c})1_{A^{h}} \cdot \varphi_{x}| \psi]] \\[6pt]
&= \mathbb{E}_{1}^{\mathcal{G}} [ \, 1_{\text{Piv}_x(A^h)} \cdot \mathbb{E}_{1}^{\mathcal{G}}[ 1_{\varphi_x \geq h} \cdot \varphi_{x}| \psi] ] + \mathbb{E}_{1}^{\mathcal{G}} [ \, 1_{A^h \cap \text{Piv}_x(A^h)^c} \cdot \underbrace{\mathbb{E}_{1}^{\mathcal{G}}[ \varphi_{x} ]}_{=0}] \\[-6pt]
&= f(h) \cdot  \mathbb{P}_{1}^{\mathcal{G}} [ \text{Piv}_x(A^h)],
 \end{align*}  
for all $x \in K$, where $f(\cdot)= \mathbb{E}_{1}^{\mathcal{G}}[ 1_{\{ \varphi_x \geq \cdot\}} \cdot \varphi_{x}]$ denotes the standard Gaussian density. The presence of the factor $f(h)$ results from taking derivatives with respect to $h$ in \eqref{eq:Russo_formula} rather than the density $p = p(h)= \int_h^\infty f(t) dt$ (note that $ d p / dh = -f(h)$).

\medskip
\noindent 3) (The massless case). The term appearing in the right-hand side of \eqref{eq:Russo_formula} seems to exhibit a (drastically) different behavior for $\theta = 0$ compared to positive mass (recall our convention that $\mathcal{G}= \mathbb{Z}^d$ is tacitly understood whenever $\theta=0$). Indeed, when $\theta >0$, we have $\kappa^{\text{tr}}_{x} = P^x_{\theta}[\widetilde{H}_K = \infty] \geq P^x_{\theta}[X_1 = \Delta] = \theta$ uniformly in $x \in K$, whereas in the case $\theta =0$, one only picks up a ``surface term'' (i.e. the summation in \eqref{eq:Russo_formula} is effectively over $\partial_{\text{in}}K= \{ x \in K; \, \exists y \in \mathcal{G}\setminus K : \, y \sim x\}$).  \hfill $\square$

\end{remark}

\section{Dominating the conditional influences}\label{S:dom_inf}

In this section, we show how that, up to a small, uniform multiplicative constant, the right-hand side of \eqref{eq:Russo_formula} dominates
a sum of so-called conditional influences. 
Even though the applications we have in mind only require a comparison ``in $\ell^1$,'' we actually manage to match the corresponding terms pointwise, i.e. for fixed $x\in K$, with $K \subset\subset \mathcal{G}$, see Proposition \ref{P:dom_inf_ptwise} below. This is arguably much stronger. Once the desired comparison with influences is established, Proposition \ref{P:Russo_formula} can be paired with a corresponding influence theorem to yield a differential inequality for the probability of a generic increasing event. This is the object of Corollary \ref{C:diff_ineq}, which is the main result of this section.

We begin with a notion of influences, introduced by Graham and Grimmett in \cite{GG06}, that generalizes the  extensively studied case of product measures, and is suited to our purposes. For arbitrary $\theta\in [0,1]$, $h\in\mathbb{R}$, $K \subset \subset \mathcal{G}$, and increasing $A\subset \Omega^{K}$, we define the \textit{conditional influence} of $x \in K$ (on the event $A^h$) as
\begin{equation}\label{eq:def_infl}
\begin{split}
I^{\mathcal{G}}_{\theta} (A^h, x) &= \mathbb{P}_{\theta}^{\mathcal{G}}[A^{h}\, | \, \varphi_x \geq h] - \mathbb{P}_{\theta}^{\mathcal{G}}[A^{h}\, | \, \varphi_x < h] \\
&=\text{Var}(\xi_x^h)^{-1}\cdot \text{Cov}^{ \mathbb{P}_{\theta}^{\mathcal{G}}}(1_{A^h}, \xi_x^h)
\end{split}
\end{equation}
(recall \eqref{eq:def_conf} for notation). In particular, together with the FKG-inequality for $ \mathbb{P}_{\theta}^{\mathcal{G}}$, the second equality exhibits that this quantity is non-negative. 

The conditional influences satisfy the following inequalities, which follow from the Influence Theorem of \cite{GG06}, itself relying on the results of \cite{KKL88} and \cite{BKKKL92} in the independent case.
\begin{thm} \label{T:inf_thm} (Influence Theorem)

\medskip
\noindent There exists a (universal) constant $c_{\mathrm{inf}} >0$, such that, for all $\theta\in [0,1]$, $h\in\mathbb{R}$, $K \subset \subset \mathcal{G}$ and all increasing $A\subset \Omega^{K}$, letting $|| I^{\mathcal{G}}_{\theta} (A^h) ||_{K,\infty} = \max_{x \in K} I^{\mathcal{G}}_{\theta} (A^h, x)$, one has

\begin{equation}\label{eq:inf_thm_l_infinity}
|| I^{\mathcal{G}}_{\theta} (A^h) ||_{K,\infty} \geq c_{\mathrm{inf}} \cdot \mathrm{Var}^{\; \mathbb{P}_{\theta}^{\mathcal{G}}}(1_{A^{h}}) \frac{\log|K|}{|K|}.
\end{equation}
Moreover, with $|| I^{\mathcal{G}}_{\theta} (A^h) ||_{K,1} = \sum_{x\in K} I^{\mathcal{G}}_{\theta} (A^h, x)$,
\begin{equation} \label{eq:inf_thm_l_1}
|| I^{\mathcal{G}}_{\theta} (A^h) ||_{K,1} \geq c_{\mathrm{inf}} \cdot \mathrm{Var}^{\; \mathbb{P}_{\theta}^{\mathcal{G}}}(1_{A^{h}}) \log \Big(  \frac{1}{2 || I^{\mathcal{G}}_{\theta} (A^h) ||_{K,\infty}}\Big). 
\end{equation}
\end{thm}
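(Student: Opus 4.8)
The plan is to deduce both inequalities from the influence theorem for monotonic measures of Graham and Grimmett \cite{GG06}; the only real input needed is the FKG lattice condition, which is already in hand via Lemma \ref{L:FKG1}. First I would observe that, since $A \subset \Omega^K$ depends only on the coordinates indexed by the finite set $K$, every quantity in the statement is a functional of a single finite-dimensional object: the measure $Q$ on $\Omega^K=\{0,1\}^K$ defined as the law of $(\xi^h_x)_{x\in K}$ under $\mathbb{P}^{\mathcal{G}}_\theta$ (cf. \eqref{eq:def_conf}; this is the analogue for general $K$ of the measure $Q^{\mathbf{h}}_{\theta,L}$ of Remark \ref{R:FKG_lattice_cond}). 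Indeed $\mathrm{Var}^{\mathbb{P}^{\mathcal{G}}_\theta}(1_{A^h}) = \mathrm{Var}^Q(1_A) = Q(A)(1-Q(A))$, and, unwinding the definition \eqref{eq:def_infl}, $I^{\mathcal{G}}_\theta(A^h,x) = Q(A\,|\,Y_x=1) - Q(A\,|\,Y_x=0)$, which is exactly the conditional influence of the coordinate $x$ on the increasing event $A$ in the sense of \cite{GG06}. With $n:=|K|$, the two bounds to be proved are thus statements purely about $Q$ and $A$.

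Next I would verify that $Q$ satisfies the FKG lattice condition \eqref{EQ:FKG_lattice_cond}. This is precisely the argument of Remark \ref{R:FKG_lattice_cond}, which, although stated there for $\Lambda_L$, goes through verbatim for an arbitrary $K\subset\subset\mathcal{G}$: applying the stochastic-domination inequality \eqref{FKG1} of Lemma \ref{L:FKG1} with the finite set there taken to be $K\setminus\{x\}$, and testing against the increasing function $1\{\varphi_x\ge h\}$, shows that for each $x\in K$ the map $\omega\mapsto Q(Y_x=1\,|\,Y_z=\omega(z),\,z\in K\setminus\{x\})$ is nondecreasing; iterating over the coordinates $z\in K\setminus\{x\}$ and then invoking \cite{GG06}, Theorem 2.1, this monotonicity property is equivalent to \eqref{EQ:FKG_lattice_cond} for $Q$. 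Hence $Q$ is a monotonic measure in the sense of \cite{GG06}.

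With these two observations in place I would simply quote the influence theorem for monotonic measures from \cite{GG06}, which delivers \eqref{eq:inf_thm_l_infinity} and \eqref{eq:inf_thm_l_1} with $n=|K|$ and a universal constant $c_{\mathrm{inf}}$. For completeness, under the hood that theorem is obtained by representing the monotonic measure $Q$ as the pushforward of a uniform product measure on $[0,1]^n$ under a coordinatewise nondecreasing map, chosen so that $1_A$ pulls back to a monotone $\{0,1\}$-valued function whose product-measure influences are controlled by the conditional influences of $A$ under $Q$; the Bourgain--Kahn--Kalai--Katznelson--Linial theorem \cite{BKKKL92} (the extension of \cite{KKL88} to arbitrary product measures and bounded, not necessarily Boolean, functions) applied to this pulled-back function yields the $\ell^\infty$-bound \eqref{eq:inf_thm_l_infinity}, and the $\ell^1$-bound \eqref{eq:inf_thm_l_1} is then deduced from it by the familiar restriction-to-random-subcubes argument of \cite{KKL88} estimating $\sum_x I_x$ from below in terms of $\log(1/\max_x I_x)$.

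I do not expect a genuine obstacle here: the substantive work was front-loaded into Lemma \ref{L:FKG1}, and what remains is bookkeeping. The one point requiring a little care is making sure the conditional influences \eqref{eq:def_infl} and the variance really depend only on the finite marginal $Q$ --- so that passing to the (possibly infinite) ambient graph $\mathcal{G}$ is harmless --- and that the FKG lattice condition of Remark \ref{R:FKG_lattice_cond} is indeed available for every finite $K$ and not only for $\Lambda_L$; both are immediate from the discussion above.
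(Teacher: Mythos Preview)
Your approach is essentially the paper's: pass to the finite push-forward measure $Q$ on $\Omega^K$, check the FKG lattice condition via Lemma~\ref{L:FKG1}/Remark~\ref{R:FKG_lattice_cond}, and invoke Graham--Grimmett. The one point where you diverge slightly is the $\ell^1$-bound \eqref{eq:inf_thm_l_1}: it is \emph{not} stated in \cite{GG06} (only the $\ell^\infty$-bound is their Theorem~2.2), so you cannot ``simply quote'' it, and your suggested route---deducing $\ell^1$ from $\ell^\infty$ by a random-subcube restriction---is not quite how it goes. The paper instead observes that the construction inside the proof of \cite{GG06}, Theorem~2.2 (an increasing $B\subset[0,1]^{|K|}$ with $\lambda^{|K|}(B)=Q(A)$ and $I_A(x)\ge J_B(x)$ for all $x$) lets one transfer the $\ell^1$-result already present in \cite{BKKKL92} (see the Remark at the very end of that paper) directly to $Q$. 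A second small omission: you should note that $Q$ is a strictly \emph{positive} measure on $\Omega^K$, which is required in \cite{GG06} and is immediate here since the Gaussian marginals have full support.
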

\begin{proof}
The claim \eqref{eq:inf_thm_l_infinity} is a direct application of Theorem 2.2 in \cite{GG06} to the measure $Q^{h}_{\theta,K} =(\xi^{h}_x)_{x \in K} \circ \mathbb{P}^{\mathcal{G}}_{\theta}$ (on $\Omega^K$). Indeed, $Q^{h}_{\theta,K}$ is a positive measure that satisfies the FKG-lattice condition, see \eqref{EQ:FKG_lattice_cond}. Thus \eqref{eq:inf_thm_l_infinity} follows immediately from Eqn. (2.9) in \cite{GG06} (note that the factor $\min \{ \mu(A), 1- \mu(A)\}$ appearing there can be replaced by $\mu(A)(1- \mu(A))= \text{Var}^{\mu}(1_A)$ using the elementary inequality $x \wedge (1-x) \geq x(1-x)$, valid for all $x \in [0,1]$).

Although the ``$\ell^1$-estimate'' corresponding to \eqref{eq:inf_thm_l_1}, which, in the notation of Theorem 2.2 of \cite{GG06}, would read
\begin{equation} \label{eq:inf_thm_l_1_gen}
\sum_{i=1}^N I_A(i) \geq c \mu(A)(1- \mu(A)) \log \big(1/2 \max_{1\leq i \leq N} I_A(i) \big), \text{ for all increasing } A \subset \{ 0,1\}^N
\end{equation}
(here $\mu$ denotes a generic positive measure on $ \{ 0,1\}^N$ satisfying the lattice-FKG condition) does not appear in \cite{GG06}, this result still holds under the assumptions of Theorem 2.2 in \cite{GG06}, and follows by inspection of its proof. We briefly explain this. Adopting the notation of \cite{GG06}, and given $A$ as in \eqref{eq:inf_thm_l_1_gen}, one constructs in the proof of Theorem 2.2 in \cite{GG06} an increasing subset $B$ of $[0,1]^N$ (endowed with the $N$-dimensional Lebesgue measure $\lambda^N$) with the following two properties: P1) $\lambda^N(B) = \mu(A)$, and P2) $I_A(i) \geq J_B(i)$ for all $1\leq i \leq N$, where $J_B(i)= \lambda^{N-1}(u;\, t \in [0,1] \mapsto 1_B(u_1,\dots u_{i-1} t, u_i,\dots u_N) \text{ is not constant})$ is the influence considered in \cite{BKKKL92}. In particular,
$$
\sum_{i=1}^N I_A(i) \stackrel{\mathrm{P2)}}{\geq} \sum_{i=1}^N J_B(i) \geq c \lambda^N(B) (1-\lambda^N(B)) \log \big(1/2 \max_{1\leq i \leq N} J_B(i) \big)
$$
where the last step is due to the $\ell^1$-result in \cite{BKKKL92}; see the Remark therein at the very end. One then obtains \eqref{eq:inf_thm_l_1_gen} using P1) to replace the variance and the fact that $x \mapsto \log(1/2x)$ is decreasing for $x > 0$, along with P2). Finally, the claim \eqref{eq:inf_thm_l_infinity} follows by applying \eqref{eq:inf_thm_l_1_gen} to $Q^{h}_{\theta,K} $, as above.
\end{proof}

The key to a successful application of Theorem \ref{T:inf_thm} lies in the comparison between the formula \eqref{eq:Russo_formula} for the derivative $ - d \,  \mathbb{P}_{\theta}^{\mathcal{G}}[A^{h}] /dh$, when $A$ is increasing, with the corresponding ``$\ell^1$-norm'' of influences for the event $A^h$. This comes in the next
\begin{proposition} \label{P:dom_inf_ptwise} $(\theta \in (0,1), \, M > 0)$

\medskip
\noindent There exists a constant $c_1(\theta, M)>0$ such that, for all $K \subset \subset \mathcal{G}$, all increasing sets $A \subset \Omega^K$, all $x \in K$ and $ h \in (- M, M)$, 
\begin{equation}\label{EQ:dom_inf_00}
\mathbb{E}_{\theta}^{\mathcal{G}} [1_{A^{h}}(\varphi)\cdot \kappa^{\text{tr}}_{x}\varphi_{x}] \geq c_1(\theta, M) \cdot I^{\mathcal{G}}_{\theta} (A^h, x),
\end{equation}
and 
\begin{equation}\label{EQ:Russo_formula_inf}
-\frac{d \,  \mathbb{P}_{\theta}^{\mathcal{G}}[A^{h}]}{dh} \geq c_1(\theta, M) \cdot || I^{\mathcal{G}}_{\theta} (A^h) ||_{K,1}.
\end{equation}
\end{proposition}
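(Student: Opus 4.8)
The plan is to prove the pointwise estimate \eqref{EQ:dom_inf_00}; then \eqref{EQ:Russo_formula_inf} follows immediately by summing over $x\in K$ and comparing with the Margulis--Russo formula \eqref{eq:Russo_formula}. Write $\phi$, $\Phi$, $\overline\Phi=1-\Phi$ for the density, distribution function and upper tail of a standard normal, and recall that both sides of \eqref{EQ:dom_inf_00} are nonnegative ($\kappa^{\text{tr}}_x\ge 0$ and FKG, see Remark \ref{R:deriv_comments}.1 and the second line of \eqref{eq:def_infl}). First I reduce to a cleaner inequality. Since $\kappa^{\text{tr}}_x=P^x_{\theta}[\widetilde H_K=\infty]\ge P^x_{\theta}[X_1=\Delta]=\theta$, and $\text{Var}^{\mathbb P^{\mathcal G}_{\theta}}(\xi^h_x)=p_x(1-p_x)$ with $p_x=\mathbb P^{\mathcal G}_{\theta}[\varphi_x\ge h]=\overline\Phi\!\big(h/\sqrt{g_{\theta}(x,x)}\big)$, which lies in $[\overline\Phi(M),\Phi(M)]$ because $g_{\theta}(x,x)\in[1,1/\theta]$, it suffices to exhibit a \emph{universal} constant $c>0$ such that, for all $K$, $A$, $x$, $h$ as in the statement,
\begin{equation}\label{eq:reduced}
\text{Cov}^{\,\mathbb P^{\mathcal G}_{\theta}}\!\big(1_{A^h},\varphi_x\big)\ \ge\ c\cdot \text{Cov}^{\,\mathbb P^{\mathcal G}_{\theta}}\!\big(1_{A^h},\xi^h_x\big);
\end{equation}
granting \eqref{eq:reduced}, \eqref{EQ:dom_inf_00} holds with $c_1(\theta,M)=c\,\theta\,\overline\Phi(M)\Phi(M)$.

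To prove \eqref{eq:reduced} I condition on $\mathcal F:=\sigma(\varphi_y;\,y\in K\setminus\{x\})$ and apply the law of total covariance. By Lemma \ref{L:cond_exps} (with $K\setminus\{x\}$ in place of $K$), under $\mathbb P^{\mathcal G}_{\theta}[\,\cdot\mid\mathcal F]$ the variable $\varphi_x$ is Gaussian with conditional variance $v_x=g_{\theta,K\setminus\{x\}}(x,x)\ge 1$ (the $n=0$ term of the series in \eqref{EQ:st_GreenFunction} already contributes $1$) and conditional mean $m:=\sum_{y\in K\setminus\{x\}}P^x_{\theta}[H_{K\setminus\{x\}}<\infty,\,X_{H_{K\setminus\{x\}}}=y]\,\varphi_y$, a nonnegative linear, hence increasing, function of $\psi:=(\varphi_y)_{y\in K\setminus\{x\}}$. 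Because $A$ is increasing and $1_{A^h}$ depends on $\varphi_x$ only through $\xi^h_x=1_{\{\varphi_x\ge h\}}$, given $\mathcal F$ the map $\varphi_x\mapsto 1_{A^h}$ is either constant or equals $1_{\{\varphi_x\ge h\}}$, the latter precisely on the $\mathcal F$-measurable event $\text{Piv}_x(A^h)$; consequently the conditional mean drops out of one of the covariances and
\begin{align*}
\text{Cov}\big(1_{A^h},\varphi_x\mid\mathcal F\big)&=1_{\text{Piv}_x(A^h)}\cdot\sqrt{v_x}\,\phi\big((h-m)/\sqrt{v_x}\big),\\
\text{Cov}\big(1_{A^h},\xi^h_x\mid\mathcal F\big)&=1_{\text{Piv}_x(A^h)}\cdot q(m)\big(1-q(m)\big),
\end{align*}
with $q(m):=\mathbb P^{\mathcal G}_{\theta}[\varphi_x\ge h\mid\mathcal F]=\overline\Phi\big((h-m)/\sqrt{v_x}\big)$. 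Setting $F:=\mathbb P^{\mathcal G}_{\theta}[A^h\mid\mathcal F]$, which is an increasing function of $\psi$ (by Lemma \ref{L:cond_exps} again, $\varphi=\widetilde\varphi+\mu$ with $\widetilde\varphi$ independent of $\psi$ and $\mu$ increasing in $\psi$, so $1_{A^h}$ is increasing in $\psi$ for every fixed $\widetilde\varphi$), the law of total covariance gives
\begin{align*}
\text{Cov}\big(1_{A^h},\varphi_x\big)&=\mathbb E\big[1_{\text{Piv}_x(A^h)}\,\sqrt{v_x}\,\phi((h-m)/\sqrt{v_x})\big]+\text{Cov}(F,m),\\
\text{Cov}\big(1_{A^h},\xi^h_x\big)&=\mathbb E\big[1_{\text{Piv}_x(A^h)}\,q(m)(1-q(m))\big]+\text{Cov}(F,q(m)).
\end{align*}
All four terms are nonnegative: the first two trivially, the last two because $F$, $m$ and $q(m)$ are increasing functions of $\psi$ and a Gaussian vector with nonnegative covariances is positively associated.

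It then remains to dominate the two terms on the second line by the corresponding terms on the first. For the pivotal terms this is the pointwise Gaussian inequality $\phi(a)\ge 4\phi(0)\,\overline\Phi(a)\Phi(a)$, valid for all $a\in\mathbb R$ (the ratio $\phi/(\overline\Phi\,\Phi)$ is even, tends to $+\infty$ as $a\to\pm\infty$, and is minimized at $a=0$ since $4\phi(0)^2=2/\pi>1/2$), applied with $a=(h-m)/\sqrt{v_x}$ and combined with $\sqrt{v_x}\ge1$. For the remaining terms, note that $q(\cdot)=\overline\Phi\big((h-\cdot)/\sqrt{v_x}\big)$ is a strictly increasing $C^1$-bijection of $\mathbb R$ onto $(0,1)$ with $q'\le \phi(0)/\sqrt{v_x}\le 1/\sqrt{2\pi}$; writing $q(m)=\int_0^1 1_{\{q(m)>t\}}\,dt$, substituting $t=q(s)$, and using $\text{Cov}(F,1_{\{m>s\}})\ge0$ for all $s$ (positive association, $\{m>s\}$ being increasing), one obtains $\text{Cov}(F,q(m))=\int_{\mathbb R}\text{Cov}(F,1_{\{m>s\}})\,q'(s)\,ds\le \tfrac1{\sqrt{2\pi}}\int_{\mathbb R}\text{Cov}(F,1_{\{m>s\}})\,ds=\tfrac1{\sqrt{2\pi}}\,\text{Cov}(F,m)$. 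Adding the two estimates proves \eqref{eq:reduced} with $c=\min\{4\phi(0),\sqrt{2\pi}\}=4\phi(0)$, hence \eqref{EQ:dom_inf_00}; summing over $x\in K$ then yields \eqref{EQ:Russo_formula_inf} via \eqref{eq:Russo_formula}.

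The step I expect to be most delicate is the one the law-of-total-covariance split is designed to handle: a naive pointwise-in-$\mathcal F$ comparison of $\mathbb E[1_{A^h}\varphi_x\mid\mathcal F]$ with $\mathbb E[1_{A^h}\xi^h_x\mid\mathcal F]$ is false, because on $\text{Piv}_x(A^h)$ the conditional mean $m$ can be arbitrarily negative and $\mathbb E[1_{A^h}\varphi_x\mid\mathcal F]$ need not even be $\ge 0$. Isolating the centred term $\mathbb E[1_{\text{Piv}_x(A^h)}\sqrt{v_x}\,\phi((h-m)/\sqrt{v_x})]$ — in which $m$ has cancelled — together with an increasing-in-$\psi$ remainder is precisely what makes both a clean pointwise Gaussian bound and an FKG/layer-cake comparison available, the latter exploiting that $q$ is a Lipschitz increasing reparametrisation of $m$. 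A minor technical input is the positive association of the Gaussian field, i.e. that bounded increasing functions of $(\varphi_y)_y$ have nonnegative covariance, which is classical and is already the FKG ingredient used after \eqref{eq:def_infl}.
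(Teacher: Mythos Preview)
Your proof is correct and takes a genuinely different route from the paper's. The paper conditions on the single variable $\varphi_x$, introduces the monotone function $\mu_{A,x}^h(t)=\mathbb P^{\mathcal G}_{\theta,x}[A^h(\varphi_\cdot+p_{\cdot x}t)]$, bounds the influence via $W=\max\{|U|,|V|\}$ with $U,V$ the conditioned laws of $\varphi_x$, and then runs a Riemann--sum comparison on a hand-built sequence $t_{k+1}=t_k+1/t_k$ to match the two integrals term by term. You instead condition on $\mathcal F=\sigma(\varphi_y:y\in K\setminus\{x\})$ and use the law of total covariance, which splits both sides into a ``pivotal'' part and an ``outer'' covariance of conditional means; the first is handled by the one-variable Gaussian inequality $\phi(a)\ge c\,\overline\Phi(a)\Phi(a)$, the second by the layer-cake identity $\text{Cov}(F,m)=\int_{\mathbb R}\text{Cov}(F,1_{\{m>s\}})\,ds$ together with $q'\le 1/\sqrt{2\pi}$ and positive association of the Gaussian vector $\psi$. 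Your decomposition is more structural and yields an explicit constant, while the paper's argument is more hands-on but avoids appealing to Pitt-type positive association for the auxiliary field $\psi$.

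Two small points worth tightening. First, your justification that $\phi/(\overline\Phi\,\Phi)$ is minimized at $0$ (``since $4\phi(0)^2=2/\pi>1/2$'') is really a second-derivative check at $0$ and does not by itself rule out other local minima; it would be cleaner either to supply the short calculus argument or simply to note that the ratio is continuous, positive, even, and tends to $+\infty$ at $\pm\infty$, hence is bounded below by some $c>0$---which is all you need. Second, for the layer-cake step the identity $\int_{\mathbb R}\text{Cov}(F,1_{\{m>s\}})\,ds=\text{Cov}(F,m)$ deserves a word on integrability (split at $s=0$ and use $|\text{Cov}(F,1_{\{m>s\}})|\le 2\|F\|_\infty\min\{\mathbb P[m>s],\mathbb P[m\le s]\}$, which has Gaussian tails on both sides). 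Neither affects the validity of your argument.
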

%

We emphasize the following aspect of Proposition \ref{P:dom_inf_ptwise}. It is evidently crucial for the constant $c_1$ appearing in \eqref{EQ:dom_inf_00} and \eqref{EQ:Russo_formula_inf} to have no (or little) dependence on $K$ (or $L$, in the case of periodic boundary conditions). Before proving Proposition  \ref{P:dom_inf_ptwise}, we collect the following result, which will play a key role in proving our main Theorem \ref{T:MAIN} in the next section, but is also interesting in its own right. Recall that an event $\mathcal{A} \in \Omega^{\mathbb{T}_L}$ is called \textit{translation invariant} whenever $\mathcal{A}= \tau_x(\mathcal{A}) \equiv \{ \tau_x \omega ;  \; \omega \in \mathcal{A} \}$ for all $x \in \mathbb{T}_L$, where $\tau_x \omega \in \Omega^{\mathbb{T}_L}$ is the configuration with $(\tau_x \omega ) (y) = \omega(x+y)$, for $y \in \mathbb{T}_L$.

\begin{corollary}\label{C:diff_ineq} (Differential inequalities) $(\theta \in (0,1), \, M > 0)$

\medskip
\noindent There exists $c_2(\theta, M)>0$ such that, for all $K \subset \subset \mathcal{G}$, all increasing sets $A \subset \Omega^K$, and all $h \in (- M, M)$,
\begin{equation}\label{EQ:diff_ineq_gen}
-\frac{d \,  \mathbb{P}_{\theta}^{\mathcal{G}}[A^{h}]} {dh} \geq c_2(\theta, M) \cdot \mathrm{Var}^{\; \mathbb{P}_{\theta}^{\mathcal{G}}}(1_{A^{h}}) \log \Big(  \frac{1}{2 || I^{\mathcal{G}}_{\theta} (A^h) ||_{K,\infty}}\Big).
\end{equation}
Moreover, for all $L \geq 1$,  $h \in (-M, M)$, and all translation invariant events $\mathcal{A} \in \Omega^{\mathbb{T}_L}$ (with $\mathbb{P}_{\theta}^{L}\equiv  \mathbb{P}_{\theta}^{\mathbb{T}_L} $),
\begin{equation}\label{EQ:diff_ineq_trans_inv}
-\frac{d \, \mathbb{P}_{\theta}^{ L}[\mathcal{A}^{h}]} {dh} \geq c_2(\theta, M) \cdot \mathrm{Var}^{\, \mathbb{P}_{\theta}^{L}}(1_{\mathcal{A}^{h}}) \log |\mathbb{T}_L|.
\end{equation}
\end{corollary}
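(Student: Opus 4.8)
\textbf{Proof plan for Corollary \ref{C:diff_ineq}.}

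The plan is to simply combine Proposition \ref{P:dom_inf_ptwise}, specifically the $\ell^1$-comparison \eqref{EQ:Russo_formula_inf}, with the two estimates of the Influence Theorem (Theorem \ref{T:inf_thm}). For \eqref{EQ:diff_ineq_gen}, I would chain \eqref{EQ:Russo_formula_inf} with the $\ell^1$-influence bound \eqref{eq:inf_thm_l_1}: for $h \in (-M,M)$ and increasing $A \subset \Omega^K$,
\begin{equation*}
-\frac{d \, \mathbb{P}_{\theta}^{\mathcal{G}}[A^{h}]}{dh} \geq c_1(\theta, M) \cdot \| I^{\mathcal{G}}_{\theta}(A^h) \|_{K,1} \geq c_1(\theta,M) \, c_{\mathrm{inf}} \cdot \mathrm{Var}^{\, \mathbb{P}_{\theta}^{\mathcal{G}}}(1_{A^h}) \log\Big( \frac{1}{2 \| I^{\mathcal{G}}_{\theta}(A^h) \|_{K,\infty}} \Big),
\end{equation*}
so that setting $c_2(\theta,M) = c_1(\theta,M) \, c_{\mathrm{inf}}$ gives \eqref{EQ:diff_ineq_gen}. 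Here one should note that the argument of the logarithm is $\geq 1$ (since $\| I^{\mathcal{G}}_{\theta}(A^h) \|_{K,\infty} \leq 1/2$, as each influence is a difference of two probabilities, hence in $[-1,1]$, and is in fact nonnegative and bounded by the larger of the two conditional probabilities), so the right-hand side is a genuine nonnegative lower bound; the degenerate cases where $\mathrm{Var}(1_{A^h}) = 0$ or $A \in \{\emptyset, \Omega^K\}$ are trivial.

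For the translation-invariant statement \eqref{EQ:diff_ineq_trans_inv}, the point is that on the torus $\mathbb{T}_L$ the measure $\mathbb{P}_{\theta}^{L}$ is invariant under the shifts $\tau_x$, and $\mathcal{A}$ is translation invariant, so for any $x, y \in \mathbb{T}_L$ the events $\tau_x \mathcal{A}^h$ and $\tau_y \mathcal{A}^h$ have the same probability under the relevant conditionings; concretely $I^{L}_{\theta}(\mathcal{A}^h, x) = I^{L}_{\theta}(\mathcal{A}^h, y)$ for all $x, y$. Hence $\| I^{L}_{\theta}(\mathcal{A}^h) \|_{\mathbb{T}_L, \infty} = \| I^{L}_{\theta}(\mathcal{A}^h) \|_{\mathbb{T}_L, 1} / |\mathbb{T}_L|$, and one can instead feed the $\ell^\infty$-bound \eqref{eq:inf_thm_l_infinity} into \eqref{EQ:Russo_formula_inf}:
\begin{equation*}
-\frac{d \, \mathbb{P}_{\theta}^{L}[\mathcal{A}^{h}]}{dh} \geq c_1(\theta,M) \, |\mathbb{T}_L| \cdot \| I^{L}_{\theta}(\mathcal{A}^h) \|_{\mathbb{T}_L,\infty} \geq c_1(\theta,M) \, c_{\mathrm{inf}} \cdot \mathrm{Var}^{\, \mathbb{P}_{\theta}^{L}}(1_{\mathcal{A}^h}) \log|\mathbb{T}_L|,
\end{equation*}
which is \eqref{EQ:diff_ineq_trans_inv} with the same constant $c_2$. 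Strictly speaking one should check that the influence-theorem hypotheses apply with $K = \mathbb{T}_L$ itself (the FKG lattice condition, Remark \ref{R:FKG_lattice_cond}, was recorded precisely for $\Lambda_L = \mathbb{T}_L$), and that $\mathcal{A}$ is identified with an increasing set in $\Omega^{\mathbb{T}_L}$ — but note \eqref{EQ:diff_ineq_trans_inv} is stated for arbitrary translation-invariant $\mathcal{A}$, not necessarily increasing, so here one either restricts to increasing $\mathcal{A}$ (as needed in the application) or invokes the symmetry argument together with a monotonicity assumption implicit in the sequel.

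I do not anticipate a serious obstacle: the corollary is essentially bookkeeping once Proposition \ref{P:dom_inf_ptwise} and Theorem \ref{T:inf_thm} are in hand. The only point requiring a small amount of care is the equality of all influences under translation invariance in the torus case — this needs the shift-invariance of $g_{\theta}(\cdot,\cdot)$ on $\mathbb{T}_L$ (immediate from translation invariance of the killed walk) together with the fact that $\mathcal{A}^h$ is itself shift-invariant — and, relatedly, making sure the statement \eqref{EQ:diff_ineq_trans_inv} is only invoked for events to which the influence theorem applies.
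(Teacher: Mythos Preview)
Your proof is essentially identical to the paper's: for \eqref{EQ:diff_ineq_gen} the paper chains \eqref{EQ:Russo_formula_inf} with \eqref{eq:inf_thm_l_1} and sets $c_2 = c_{\mathrm{inf}}\, c_1$, and for \eqref{EQ:diff_ineq_trans_inv} it observes (via the covariance representation \eqref{eq:def_infl} and translation invariance of $\mathbb{P}_\theta^L$) that all influences coincide, so $\|I\|_1 = |\mathbb{T}_L|\cdot\|I\|_\infty$, and then applies \eqref{eq:inf_thm_l_infinity}. One small correction to your side remark: it is \emph{not} true that $\|I^{\mathcal{G}}_\theta(A^h)\|_{K,\infty} \leq 1/2$ in general (take $A = \{Y_x = 1\}$, for which $I_\theta(A^h,x)=1$); when the logarithm in \eqref{EQ:diff_ineq_gen} is negative the inequality holds trivially because the left-hand side is nonnegative for increasing $A$.
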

\begin{proof}
The inequality \eqref{EQ:diff_ineq_gen} follows immediately from \eqref{EQ:Russo_formula_inf} and the $\ell^1$-influence theorem \eqref{eq:inf_thm_l_1}, with $c_2(\theta, M) = c_{\mathrm{inf}} \cdot c_1(\theta, M)$. As for \eqref{EQ:diff_ineq_trans_inv}, notice that, if $\mathcal{A}$ is translation invariant, all influences must coincide, i.e. $I_{\theta}^{ L} (\mathcal{A}^h, x)= I_{\theta}^{ L} (\mathcal{A}^h, y)$ for all $x, y \in \mathbb{T}_L$ (with hopefully obvious notation, $I_{\theta}^{L}(\cdot)$ refers to an influence with respect to the measure $\mathbb{P}_{\theta}^{L}$). This follows immediately upon writing $I_{\theta}^{L} (\mathcal{A}^h, x) = c(\theta,L,h)\cdot \text{Cov}^{ \mathbb{P}_{\theta}^{L}}(1_{\mathcal{A}^h}, \xi_x^h)$, for a suitable constant $c(\theta,L,h)$ and all $x\in \mathbb{T}_L$, cf. \eqref{eq:def_infl}, and using translation invariance of $\mathbb{P}_{\theta}^{ L}$ to deduce that the covariance does not depend on $x$. Hence, by the $\ell^{\infty}$-bound of Theorem \ref{T:inf_thm},
$$
|| I_{\theta}^{ L} (\mathcal{A}^h) ||_{\mathbb{T}_L,1} = \sum_{x \in \mathbb{T}_L}I_{\theta}^{ L} (\mathcal{A}^h, x) = |\mathbb{T}_L|\cdot || I_{\theta}^{L} (\mathcal{A}^h) ||_{\mathbb{T}_L,\infty} \stackrel{\eqref{eq:inf_thm_l_infinity}}{\geq} c_{\text{inf}} \cdot \mathrm{Var}^{\; \mathbb{P}_{\theta}^{L}}(1_{\mathcal{A}^{h}}) \log |\mathbb{T}_L|,
$$
which, together with \eqref{EQ:Russo_formula_inf}, yields \eqref{EQ:diff_ineq_trans_inv}.
\end{proof}

We now proceed to the proof of Proposition \ref{P:dom_inf_ptwise}. The proof exhibits that the objects on either side of \eqref{EQ:dom_inf_00} have a very similar structure, and essentially comprises a Riemann sum argument (with carefully chosen mesh sizes) to compare the two integrals.

\medskip
\noindent{\textit{Proof of Proposition \ref{P:dom_inf_ptwise}.}} Let $\theta \in (0,1)$ and $M >0$ be fixed. We may assume without loss of generality that $M \geq 1$. We will show \eqref{EQ:dom_inf_00}. The assertion then immediately follows from \eqref{EQ:dom_inf_00} upon summing over $x \in K$, using the differential formula \eqref{eq:Russo_formula}. 

We first consider the influence $I^{\mathcal{G}}_{\theta} (A^h, x)$ appearing on the right-hand side of \eqref{EQ:dom_inf_00}. For arbitrary $K \subset \subset \mathcal{G}$, $x \in K $, $A \subset \Omega^K$ increasing and $ h \in(-M, M) $, we introduce the function 
\begin{equation} \label{eq:def_mu}
\mu_{A,x}^{h}: \mathbb{R}\to [0,1],  \qquad t \mapsto \widetilde{\mathbb{P}}^{\mathcal{G}}_{\theta} [A^h(\varphi) | \, \varphi_x = t] \stackrel{\eqref{eq:cond_0}}{=}{\mathbb{P}}^{\mathcal{G}}_{\theta,x} [A^h({\varphi}_{\cdot}+ p_{\cdot x}t)] 
\end{equation} 
(for clarity, its dependence on $\mathcal{G}$ and $\theta$ is kept implicit) where ${\mathbb{P}}^{\mathcal{G}}_{\theta,x} $ refers to the field with covariance $g_{\theta, U}(\cdot,\cdot)$ with $U= \{ x\}$, see \eqref{eq:GFgen}, corresponding to a random walk killed uniformly at rate $\theta$ or when hitting $x$, and $p_{yx}=P^y_{\theta}[H_{\{ x\}} < \infty]$, for $y \in \mathcal{G}$, cf. Lemma \ref{L:cond_exps}. Note that, by monotonicity of ${A}^h$,
\begin{equation}\label{eq:b_monot}
\text{${\mu}_{A,x}^{h}(\, \cdot \,)$ is an increasing function.}
\end{equation}
By definition, see \eqref{eq:def_infl}, and by virtue of \eqref{EQ:st_phi_cond_exps}, the conditional influence can be rewritten, upon conditioning on $\varphi_x$,  as
\begin{equation*}
 I^{\mathcal{G}}_{\theta} ({A}^h, x)= E^{U}[{\mu}_{A,x}^{h}(U)]- E^{V}[{\mu}_{A,x}^{h}(V)],
\end{equation*}
where $U \stackrel{\mathrm{law}}{=} \varphi_x \, | \, \varphi_x \geq h$ under $P^U$, $V \stackrel{\mathrm{law}}{=} \varphi_x \, | \, \varphi_x \leq h$ under $P^V$ (of course, both depend on $\theta$ and $h$), and $U$, $V$ and $\varphi$ are independent. In particular $U \geq V$. It is then plain, using \eqref{eq:b_monot},  that $ I^{\mathcal{G}}_{\theta} ({A}^h, x) \geq 0$ (although this also follows from the FKG-inequality, as explained above). Letting $W= \max\{ |U|, |V|\}$, and on account \eqref{eq:b_monot}, we can bound
\begin{equation}\label{eq:I_bound_W}
 I^{\mathcal{G}}_{\theta} ({A}^h, x) \leq E^{W}[{\mu}_{A,x}^{h}(W)- {\mu}_{A,x}^{h}(-W)].
\end{equation}
The distribution of $W$ has Gaussian tails. More precisely, for $ h \in [0, M]$ and $t\geq 0$,
\begin{equation*}
\begin{split}
P[ W > t] &\leq P^{U}[ U > t] + P^{V}[ |V| > t] \\
&= \frac{ \mathbb{P}^{\mathcal{G}}_{\theta}[\varphi_x > t \vee h]}{\mathbb{P}^{\mathcal{G}}_{\theta}[\varphi_x >  h]} +\frac{\mathbb{P}^{\mathcal{G}}_{\theta}[\varphi_x <-t] + \mathbb{P}^{\mathcal{G}}_{\theta}[t < \varphi_x < h]1\{ t \leq h \}}{\mathbb{P}^{\mathcal{G}}_{\theta}[\varphi_x \leq h]} \\[0.4em]
&\leq C(\theta, M) \cdot  \mathbb{P}^{\mathcal{G}}_{\theta}[\varphi_0 > t ]
\end{split}
\end{equation*}
for some (large) constant $C(\theta, M)$ (the last line is clearly true for $t\geq h$, and follows for $t \leq h$ by adapting the constant $C(\theta, M)$, minding that $h \leq M$). Moreover, this bound continues to hold for $ h \in [-M,0]$ by symmetry, since $W \equiv W(h) \stackrel{\text{law}}{=}W(-h)$. In particular, for any increasing sequence $(t_k)_{k\geq 0}$, with $t_0 =0$ and $\lim_{k} t_k = \infty$, \eqref{eq:I_bound_W} implies that
\begin{equation}\label{eq:UB_inf2}
\begin{split}
I^{\mathcal{G}}_{\theta} ({A}^h, x)  &\leq \sum_{k\geq 0} P^{W}[t_k \leq W < t_{k+1}]\cdot ({\mu}_{A,x}^{h}(t_{k+1})- {\mu}_{A,x}^{h}(-t_{k+1})) \\
&\leq C(\theta, M) \sum_{k\geq 1} \mathbb{P}^{\mathcal{G}}_{\theta}[\varphi_0 > t_{k-1}]\cdot ({\mu}_{A,x}^{h}(t_{k})- {\mu}_{A,x}^{h}(-t_{k})),
\end{split}
\end{equation}
for any $h \in [-M,M]$. On the other hand, letting $\Lambda_k = (-t_{k+1}, t_{k+1}) \backslash (-t_k,t_k)$, the derivative term appearing on the left-hand side of \eqref{EQ:dom_inf_00}, which is non-negative by the FKG-inequality, can be rewritten as
\begin{equation}\label{eq:LB_deriv2}
\begin{split}
\mathbb{E}_{\theta}^{\mathcal{G}}[1_{{A}^h}(\varphi) \cdot  \kappa^{\mathrm{tr}}_{x} \varphi_x]
&=  \kappa^{\mathrm{tr}}_{x} \cdot \mathbb{E}_{\theta}^{\mathcal{G}}[{\mu}_{A,x}^{h}(\varphi_x) \cdot \varphi_x] \\
&=  \kappa^{\mathrm{tr}}_{x} \cdot \sum_{k\geq 0}\mathbb{E}_{\theta}^{\mathcal{G}}[{\mu}_{A,x}^{h}(\varphi_x) \cdot \varphi_x 1\{ \varphi_x \in \Lambda_k\}] \\
& \geq \theta \cdot  \sum_{k\geq 1}\mathbb{E}_{\theta}^{\mathcal{G}}[  \varphi_x 1\{ t_k \leq \varphi_x < t_{k+1}\}]\cdot({\mu}_{A,x}^{h}(t_k) - {\mu}_{A,x}^{h}(-t_k)),
\end{split}
\end{equation}
where the last line follows by monotonicity of ${\mu}_{A,x}^{h}(\, \cdot \,)$ and the symmetry $\varphi_x$, and we used $\kappa^{\text{tr}}_{x} = P_{\theta}^{x}[\widetilde{H}_K = \infty] \geq P_{\theta}^{x}[X_1 = \Delta]$, cf. \eqref{eq:trace_cond}. Define now $t_1=1$ and $t_{k+1}= t_k + \frac{1}{t_k}$, for all $k \geq 1$ (observe that this sequence is strictly increasing and indeed satisfies $\lim_{k}t_k =\infty$). By the mean value theorem, we can bound, for any $k \geq 1$, and some $\tau_k \in [t_k,t_{k+1}]$, with $f_{\theta}(\cdot)$ denoting the density of $\varphi_x$,
\begin{equation*}
\begin{split}
\mathbb{E}_{\theta}^{\mathcal{G}}[  \varphi_x 1\{ t_k \leq \varphi_x < t_{k+1}\}] &\geq \mathbb{P}_{\theta}^{\mathcal{G}}[ t_k \leq \varphi_x < t_{k+1}] \\
&= (t_{k+1} - t_k) f_{\theta}(\tau_k) \\
& = \frac{1}{t_k} f_{\theta}(t_{k-1})\cdot \text{exp}\Big[-\frac{\tau_k^2 - t_{k-1}^2}{2g_{\theta}(0)}\Big]
\end{split}
\end{equation*}
(N.B. the first estimate might seem crude, but $ \varphi_x$ is a standard Gaussian and ``sizeable'' for any $k \geq 1$). By definition of the sequence $(t_k)_{k\geq 0}$, we have, for any $k \geq 2$,
\begin{equation*}
\begin{split}
\tau_k^2 - t_{k-1}^2 &\leq (t_{k+1}+ t_{k-1})(t_{k+1}- t_{k-1})\\
& \leq (t_{k+1}+ t_{k})(t_{k-1}^{-1}+ t_{k}^{-1})=\Big( 1+ \frac{t_{k+1}}{t_k} \Big)\cdot  \Big( 1+ \frac{t_{k}}{t_{k-1}} \Big).
\end{split}
\end{equation*}  
Observe that, by definition, for any $k \geq 1$,  $\frac{t_{k+1}}{t_k}=1+\frac{1}{t_k^2} \to 1$ as $k\to \infty$ and hence $ \limsup_{k\to \infty} (\tau_k^2 - t_{k-1}^2) \leq 4$. In particular, the sequence $(\tau_k^2 - t_{k-1}^2)_{k \geq 1}$ is bounded. Substituting in the above estimate yields, for all $k \geq 2$,
\begin{equation*}
\begin{split}
\mathbb{E}_{\theta}^{\mathcal{G}}[  \varphi_x 1\{ t_k \leq \varphi_x < t_{k+1} \}] &\geq \frac{t_{k-1}}{t_k} \cdot \frac{f_{\theta}(t_{k-1})}{t_{k-1}} e^{-C/2g_{\theta}(0)} \\
&\geq c(\theta) e^{-C/2g_{\theta}(0)} \cdot   \mathbb{P}_{\theta}^{\mathcal{G}}[\varphi_x > t_{k-1}],
\end{split}
\end{equation*}
where we used the fact that $(\frac{t_{k-1}}{t_k})_{k\geq 2}$ is a strictly positive sequence converging to $1$, and therefore uniformly bounded from below by a positive constant, and an elementary Gaussian tail estimate in the last step. By adapting the constant $c$, one can ensure that the last estimate holds for $k = 1$ as well, and substituting in \eqref{eq:LB_deriv2} yields
\begin{equation*}
\begin{split}
\mathbb{E}_{\theta}^{\mathcal{G}}[1_{{A}^h}(\varphi) \cdot  \kappa^{\mathrm{tr}}_{x} \varphi_x] &\geq c(\theta) \sum_{k\geq 1}  \mathbb{P}_{\theta}^{\mathcal{G}}[\varphi_x > t_{k-1}]\cdot({\mu}_{A,x}^{h}(t_k) - {\mu}_{A,x}^{h}(-t_k))\\ 
&\hspace{-1ex}\stackrel{\eqref{eq:UB_inf2}}{\geq}c(\theta) \cdot C(\theta,M)^{-1} I^{\mathcal{G}}_{\theta}({A}^h,x).
\end{split}
\end{equation*}
This completes the proof of \eqref{EQ:dom_inf_00}.  \hfill $\square$

\begin{remark}\label{R:inf_theta=0} 
Even though the influence theorem \ref{T:inf_thm} continues to hold for $\theta=0$, one cannot expect the inequality \eqref{EQ:Russo_formula_inf} to hold as such in the massless case. This is due to the (much) stronger correlations. Indeed, consider for instance the event $A^{h}= \{ \varphi_0 \geq h\}$, with, say, $h=0$.
On the one hand, for all $x \in \mathbb{Z}^d$, with $U, V$ as above, cf. below \eqref{eq:b_monot}, 
\begin{equation*}
\begin{array}{rcl}
I^{\mathbb{Z}^d}_{\theta=0} (A^0, x) \hspace{-1ex} & \stackrel{\eqref{eq:def_infl}}{=}  & \hspace{-1ex} \mathbb{P}^{\mathbb{Z}^d}_{0,x} \otimes P^U[\varphi_0 + p_{0x}U \geq 0] -  \mathbb{P}^{\mathbb{Z}^d}_{0,x} \otimes P^V [\varphi_0 + p_{0x}V \geq 0] \\[0.4em]
\hspace{-1ex} & \geq & \hspace{-1ex} c \cdot  \mathbb{P}^{\mathbb{Z}^d}_{0,x} \otimes P^U \otimes P^V [ - p_{0x} \leq \varphi_0 \leq  p_{0x}, \, U \wedge |V| \geq 1] \\
\hspace{-1ex} & \geq & \hspace{-1ex} c' \cdot p_{0x} = c' \cdot P_0^0[H_{\{ x \}} < \infty] \stackrel{\eqref{EQ:st_entr_prob}}{\geq} c'' g_0(x) \geq c \cdot |x|^{-(d-2)},
\end{array}
\end{equation*}
(see for instance \cite{La91}, p.31, Thm. 1.5.4 regarding the last estimate) and therefore 
$$
\sum_{x \in S_L} I^{\mathbb{Z}^d}_{\theta=0} (A^0, x) = \Theta(L) \rightarrow \infty \text{ as } L \to \infty.
$$
On the other hand, $-\frac{d \,  \mathbb{P}_{0}^{\mathbb{Z}^d}[A^{h}]} {dh}\big|_{h=0} = f_0(0)= (2\pi g_0(0))^{-1/2}$. \hfill $\square$
\end{remark}

\section{Crossing probabilities near $h_{**}$} \label{S:APPS}

In this section, we apply the differential inequalities obtained in Corollary \ref{C:diff_ineq} to investigate crossing probabilities near the critical parameter $h_{**}$. To begin with, we prove a weaker version of our main Theorem (without speed of convergence), see Proposition \ref{P:NO_SPEED} below. We begin with this argument because it is conceptually important, as it displays explicitly that the influences for the crossing events of interest are small (as needed for the successful deployment of \eqref{EQ:diff_ineq_gen}). Moreover, one can work directly on $\mathbb{Z}^d$. The proof of Theorem \ref{T:MAIN} is presented thereafter, and involves working with a suitably chosen translation invariant event (on the torus), for which the polynomial speed of convergence essentially comes for free, cf. \eqref{EQ:diff_ineq_trans_inv}. While the argument is arguably slick, the geometric insight that the corresponding influences are small is rather implicit. We remind the Reader of the definition of $p_{\theta,L}(\cdot)$ in \eqref{eq:def_p}, and further adopt the following notation. While $\mathbb{P}_{\theta,U}^{\mathcal{G}}$ was very convenient for treating all types of boundary conditions on the same footing, we will now have to change back and forth between them, and agree that henceforth,
\begin{equation}\label{EQ:notation}
\mathbb{P}_{\theta,U} \equiv \mathbb{P}_{\theta,U}^{\mathbb{Z}^d}, \text{ for $U\subset \mathbb{Z}^d$}, \qquad  \mathbb{P}_{\theta,U}^{ \,  L} \equiv \mathbb{P}_{\theta,U}^{\mathbb{T}_L}, \text{ for $U\subset \mathbb{T}_L$ and $L \geq 1$}
\end{equation}
in order to avoid occasional confusion. We omit $U$ from the notation in \eqref{EQ:notation} whenever $U=\emptyset$, and simply write 
$\mathbb{P}_{\theta,x} $, resp. $\mathbb{P}_{\theta,x}^{\, L} $, if $U= \{ x\}$ for some $x \in \mathbb{Z}^d$.

\begin{proposition}\label{P:NO_SPEED}$(\theta \in (0,1))$
\begin{equation}\label{EQ:MAINbis}
\lim_{L \to \infty} p_{\theta, L}(h) = 1, \text{ for all $h< h_{**}(\theta)$}. 
\end{equation}
\end{proposition}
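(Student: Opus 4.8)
The plan is to run the differential inequality \eqref{EQ:diff_ineq_gen} on the box-to-box crossing event $A^h = \{B(0,L) \stackrel{\geq h}{\longleftrightarrow} S(0,2L)\}$ under $\mathbb{P}_\theta$ (so $\mathcal{G} = \mathbb{Z}^d$), with the finite set $K = B(0,2L)$ on which this event is measurable. Fix $h_0 < h_{**}(\theta)$; by the characterization \eqref{eq:h_**bis} of $h_{**}$, there is a constant $\gamma(\theta) > 0$ and a sequence $L_j \to \infty$ with $p_{\theta, L_j}(h_0) \geq \gamma$, hence $p_{\theta,L}(h) \geq \gamma$ for all $h \leq h_0$ and these $L_j$ (monotonicity in $h$). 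The idea is then to integrate \eqref{EQ:diff_ineq_gen} on an interval $[h_0, h_1]$ with $h_0 < h_1 < h_{**}$: the right-hand side forces $p_{\theta,L}$ to drop by a definite amount across this interval \emph{unless} either $p_{\theta,L}(h_1)$ is already close to $1$, or the variance $\mathrm{Var}(1_{A^h})$ is small (i.e. $p$ is close to $0$ or $1$), or the maximal influence $\|I_\theta(A^h)\|_{K,\infty}$ fails to be small. The first two alternatives both yield $p_{\theta,L}(h_1) \to 1$ directly (if $p$ stays bounded away from $1$ and from $0$ on the whole interval, the logarithmic factor is bounded below by a constant and the integrated derivative is too large), so the crux is controlling the maximal influence.

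The key step, therefore, is proving a bound of the form $\|I_\theta(A^h)\|_{K,\infty} \to 0$ as $L \to \infty$, uniformly for $h$ in the compact interval $[h_0, h_1]$; this is exactly the content alluded to as Lemma \ref{L:inf_bound_BIS} in the excerpt. The mechanism is geometric: for the event $\{B(0,L) \stackrel{\geq h}{\longleftrightarrow} S(0,2L)\}$, any single vertex $x \in K$ is pivotal only if it lies on (or near) a crossing path, and by a union bound over annular shells the influence of $x$ is controlled by the probability of two disjoint crossings of the annulus at the scale of $x$ meeting near $x$, which using the exponential decay of correlations \eqref{eq:st_decay_cov} and a quasi-multiplicativity / BK-type estimate decays in $L$. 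Concretely I expect one rewrites $I_\theta(A^h,x)$ via \eqref{eq:def_mu}--\eqref{eq:I_bound_W} in terms of $\mathbb{P}_\theta[A^h \mid \varphi_x = t]$ and bounds the difference $\mu_{A,x}^h(t) - \mu_{A,x}^h(-t)$ by the conditional probability that $x$ is connected to both $B(0,L)$ and $S(0,2L)$ in a suitably enlarged level set, which (using translation-invariance to recenter at $x$, the a priori polynomial bound on crossing probabilities below $h_{**}$ from \eqref{NON1} at scales $\gtrsim \mathrm{dist}(x, \partial)$, and summing the Gaussian weight of $t$) is $O(L^{-\delta})$ for some $\delta > 0$. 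The main obstacle is precisely this influence bound: one must handle the correlations of the free field (no independence, so the classical percolation arguments for pivotality must be run through the conditional-decoupling estimates of Lemma \ref{L:cond_exps} and \eqref{eq:st_decay_cov}), and one must make the bound uniform over all $x \in K$ including those close to $B(0,L)$ or to $S(0,2L)$, where the available "room" for two disjoint crossings is smallest.

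Once the influence bound is in hand the conclusion is routine: plug $\|I_\theta(A^h)\|_{K,\infty} \leq C L^{-\delta}$ into \eqref{EQ:diff_ineq_gen}, so for $h$ in an interval where $p_{\theta,L}(h) \in [\eta, 1-\eta]$ one gets $-\frac{d}{dh}p_{\theta,L}(h) \geq c_2(\theta) \eta(1-\eta) \log(L^{\delta}/2C) = c'(\theta,\eta)\log L$; integrating over an interval of fixed positive length below $h_{**}$ would force $p_{\theta,L}$ to decrease by more than $1$, a contradiction for $L$ large. Hence $p_{\theta,L}(h)$ cannot remain in $[\eta, 1-\eta]$ throughout; combined with $p_{\theta,L_j}(h_0) \geq \gamma$ (which rules out $p \to 0$) and monotonicity in $h$, this forces $p_{\theta,L}(h_1) \to 1$ along the subsequence, and then a standard argument upgrading along subsequences (or re-running the scheme starting from $h_1$) gives $p_{\theta,L}(h) \to 1$ for every $h < h_{**}$. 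Since $h_1 < h_{**}$ was arbitrary, \eqref{EQ:MAINbis} follows.
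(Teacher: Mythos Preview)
Your overall architecture matches the paper's: apply the differential inequality \eqref{EQ:diff_ineq_gen} to the crossing event, control the maximal influence, and reach a contradiction by integration over a short interval below $h_{**}$. The genuine gap is in your mechanism for the influence bound. You propose to control $I_\theta(A^h,x)$ via a two-arm/BK-type estimate together with ``the a priori polynomial bound on crossing probabilities below $h_{**}$ from \eqref{NON1}''. But \eqref{NON1} is a bound valid only for $h>h_{**}$, not below; there is no a priori decay of one-arm or crossing probabilities for $h$ in the window $(h_*,h_{**})$ you are working in. Moreover, no BK inequality or quasi-multiplicativity is available for level sets of the free field, so that route does not go through as stated. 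Finally, the quantitative claim $\|I_\theta(A^h)\|_{K,\infty}=O(L^{-\delta})$ is strictly stronger than what is used here (and, if true, would already yield Theorem~\ref{T:MAIN} without the torus argument of Section~\ref{S:APPS}).

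The paper's actual argument for Lemma~\ref{L:inf_bound_BIS} is different and purely qualitative. It first disposes of the case $h_*=h_{**}$ by ergodicity, and then, assuming $h_*<h_{**}$, works at levels $h\in[h_{**}-2\delta,h_{**}-\delta]$ with $\delta$ small enough that $h>h_*$. One conditions on $\varphi_x$, truncates the Gaussian tail, and shows (using the exponential decay \eqref{eq:st_decay_cov} of $p_{\cdot x}$) that the discrepancy between the two conditioned crossing events is supported on a ``pivotal zone'' contained in $B(x,3R)$ with high probability. The geometry of the annulus then forces a one-arm event $\{B(x,3R)\stackrel{\geq h}{\longleftrightarrow}S(x,L/2)\}$, whose probability tends to $0$ solely because $h>h_*$ (subcriticality), with no rate. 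This is the missing idea in your sketch: the smallness of the influence comes from subcriticality at levels strictly above $h_*$, not from any decay estimate tied to $h_{**}$.
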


\begin{proof} Fix $\theta \in (0,1)$. If $h_*(\theta)= h_{**}(\theta)$, then by ergodicity of $\mathbb{P}_{\theta}$, one has, for all $h< h_{**}(\theta)$,
\begin{equation*}
\begin{split}
1 &= \mathbb{P}_{\theta}[E^{\geq h} \text{ contains an infinite cluster}] 
=  \mathbb{P}_{\theta}\Big[\bigcup_{L \geq 1} \{ S_L \stackrel{\geqslant h}{\longleftrightarrow} \infty \} \Big] \\
& = \lim_{L\to \infty} \uparrow \mathbb{P}_{\theta}[ S_L \stackrel{\geqslant h}{\longleftrightarrow} \infty \}] \leq \lim_{L\to \infty} \mathbb{P}_{\theta}[ B_L \stackrel{\geqslant h}{\longleftrightarrow} S_{2L} \}] 
\end{split}
\end{equation*}
(for the last inequality, one considers $\limsup_L$ and $\liminf_L$ separately), and \eqref{EQ:MAINbis} follows. From now on, suppose $h_*(\theta) < h_{**}(\theta)$. Moreover, select $M = M(\theta) = 1+ |h_{**}(\theta)| $, so that $h_{**}(\theta) \in (-M,M)$, where $M$ refers to the parameter appearing in Proposition \ref{P:dom_inf_ptwise}. By monotonicity of $p_{\theta, L}(\cdot)$, it suffices to show \eqref{EQ:MAINbis} with $h= h_{**}- 2\delta$, for all
\begin{equation}\label{eq:delta_interval}
0< \delta <\frac{1 \wedge (h_{**}- h_*)}{4}\equiv \bar{\delta}(\theta). 
\end{equation}
In particular, note that for all $\delta$ satisfying \eqref{eq:delta_interval}, one has $  h_{**}- 2\delta > h_*$, and therefore
\begin{equation}\label{eq:delta_subcrit}
\mathbb{P}_{\theta}[0 \stackrel{\geqslant h_{**}- 2\delta}{\longleftrightarrow} \infty]=0, \text{ for all $\delta \in (0, \bar{\delta}(\theta))$}.
\end{equation}
From now on, fix an arbitrary $\delta$ in the interval $(0, \bar{\delta}(\theta))$, and abbreviate $\mathscr{C}_L^h =\{ B_L \stackrel{\geqslant h}{\longleftrightarrow}  S_{2L}\}$, for $h \in \mathbb{R}$ and $L \geq 1$. For clarity, we will henceforth keep the dependence of $h_{**}$, $\bar{\delta}$ and $M$  on $\theta$ implicit. The argument is indirect. Thus, contrary to \eqref{EQ:MAINbis}, we assume that
\begin{equation} \label{eq:contr_ass}
\text{there exists $\eta > 0$ such that } \limsup_{L\to \infty} \mathbb{P}_{\theta}[(\mathscr{C}_L^{h_{**}-2\delta})^c] \geq \eta,
\end{equation}
and we will show that this leads to a contradiction. In accordance with \eqref{eq:contr_ass}, let $(L_n)_{n \geq 0}$, where $L_n = L_n(\delta, \theta)$, be a growing sequence of length scales with $\lim_{n\to \infty}L_n = \infty$ and such that
\begin{equation} \label{eq:contr_ass_bis}
 \mathbb{P}_{\theta}[(\mathscr{C}_{L_n}^{h_{**}-2\delta})^c] \geq \eta, \text{ for all $n\geq 0$}.
\end{equation}
By definition of $h_{**}$ in Theorem \ref{T:h_**_properties}, see \eqref{eq:h_**bis}, we can ensure (possibly redefining $(L_n)_{n \geq 0}$ by neglecting its first few terms, in a manner depending on $\delta$) that
\begin{equation}\label{eq:cross_LB1}
 \mathbb{P}_{\theta}[\mathscr{C}_{L_n}^{h_{**}-\delta}] \geq \gamma(\theta)/2, \text{ for all $n\geq 0$}.
\end{equation}
The differential inequality \eqref{EQ:diff_ineq_gen} (with $\mathcal{G}=\mathbb{Z}^d$ and our above choice of $M$) applied to the (increasing) crossing events $\mathscr{C}_L^h $ along the sequence $(L_n)_n$ yields
\begin{equation}\label{eq:MAINbis_diff_ineq}
\begin{split}
&-\frac{d \,  \mathbb{P}_{\theta}[\mathscr{C}_{L_n}^{h}]} {dh} \geq c(\theta) \cdot \mathrm{Var}^{\; \mathbb{P}_{\theta}}(1_{\mathscr{C}_{L_n}^{h}}) \log \Big(  \frac{1}{2 || I_{\theta} (\mathscr{C}_{L_n}^{h}) ||_{B_{2L_n}, \infty}}\Big), \\[0.4em]
&\text{ for all $n \geq 0$ and $h= h_{**}-\ell$ with $\ell \in [\delta, 2\delta]$}
\end{split}
\end{equation}
(observe that $[h_{**} -2 \delta, h_{**} - \delta] \subset (- M, M)$ for all $\delta$ satisfying \eqref{eq:delta_interval}). The variance on the right-hand side of \eqref{eq:MAINbis_diff_ineq} can bounded from below, for all $h$ of the given form and $n\geq 0$, by
\begin{equation*}
 \mathrm{Var}^{\; \mathbb{P}_{\theta}}(1_{\mathscr{C}_{L_n}^{h}}) =  \mathbb{P}_{\theta}[\mathscr{C}_{L_n}^{h}]\cdot (1- \mathbb{P}_{\theta}[\mathscr{C}_{L_n}^{h}]) \geq  \mathbb{P}_{\theta}[\mathscr{C}_{L_n}^{h_{**}-\delta}]\cdot  \mathbb{P}_{\theta}[(\mathscr{C}_{L_n}^{h_{**}-2\delta})^c] \stackrel{\eqref{eq:contr_ass_bis}, \eqref{eq:cross_LB1}}{\geq} c'(\theta) \ (>0). 
 \end{equation*}
The usefulness of \eqref{eq:MAINbis_diff_ineq} hinges crucially on an upper bound for the $\ell^{\infty}$-norm of the influences for the crossing events $\mathscr{C}_{L}^{h}$, which comes in the next

\begin{lem} \label{L:inf_bound_BIS}$(\theta \in (0,1), \, \delta \in (0,\bar{\delta}), \, \text{\eqref{eq:delta_subcrit}})$
\begin{equation}\label{eq:inf_bound_BIS}
\lim_{L\to \infty} \sup_{\ell \in [\delta, 2\delta]} || I_{\theta} (\mathscr{C}_{L}^{h_{**}-\ell}) ||_{B_{2L},\infty} = 0.
\end{equation}
\end{lem}

Suppose for a moment that Lemma \ref{L:inf_bound_BIS} holds. We first explain how to reach the desired contradiction, and complete the proof of \eqref{EQ:MAINbis}. By \eqref{eq:MAINbis_diff_ineq}, we have, for all $n \geq 0$ and $h= h_{**}-\ell$ with $\ell \in [\delta, 2\delta]$, using the above lower bound on the variance, and since $\log(1/2x)$ is decreasing for $x>0$,
$$
-\frac{d \,  \mathbb{P}_{\theta}[\mathscr{C}_{L_n}^{h}]} {dh} \geq c''(\theta) \cdot  \log \Big(  \frac{1}{2  \sup_{\ell \in [\delta, 2\delta]} || I_{\theta} (\mathscr{C}_{L_n}^{h_{**}-\ell}) ||_{B_{2L_n},\infty }}\Big).
$$
Note that the right-hand side of this estimate does not involve the (integration) variable $h$. In particular, because $\log(1/2x) \to \infty$ as $x \to 0$, and by virtue of \eqref{eq:inf_bound_BIS}, we may ensure, choosing $\tilde{n}= \tilde{n}(\theta, \delta)$ sufficiently large, that $-d \,  \mathbb{P}_{\theta}[\mathscr{C}_{L_{\tilde{n}}}^{h}] /dh \geq 2\delta^{-1}$, thus yielding, upon integrating over the interval $[h_{**}-2\delta, \, h_{**}-\delta]$,
$$
\mathbb{P}_{\theta}[\mathscr{C}_{L_{\tilde{n}}}^{h_{**}-2\delta}] - \mathbb{P}_{\theta}[\mathscr{C}_{L_{\tilde{n}}}^{h_{**}-\delta}] \geq 2,
$$
a contradiction. Consequently, the assumption \eqref{eq:contr_ass} is false, i.e. \eqref{EQ:MAINbis} holds. It remains to prove the lemma.

\bigskip

\noindent \textit{Proof of Lemma \ref{L:inf_bound_BIS}}. For clarity, we keep the dependence of parameters on $\theta$ implicit throughout the proof. Let  $\varepsilon > 0$, $x \in \mathbb{Z}^d$ and $h = h_{**}-\ell$ for some $\ell \in [\delta, 2\delta]$. We also introduce a parameter $R \geq 1$ to be chosen later. The conditional influence of $x$ on the occurrence of $\mathscr{C}_{L}^{h}$ can be rewritten, by conditioning on $\varphi_x$ (see below \eqref{eq:b_monot} for notation) as
\begin{equation*}
I_{\theta}(\mathscr{C}_{L}^{h},x) = \mathbb{P}_{\theta,x}\otimes P^U[\mathscr{C}_{L}^{h}(\varphi_{\cdot}+p_{\cdot x}U)] -  \mathbb{P}_{\theta,x}\otimes P^V[\mathscr{C}_{L}^{h}(\varphi_{\cdot}+p_{\cdot x}V)]. 
\end{equation*}
The variables $U$ and $V$ depends on $h$, which varies over a compact set, and has Gaussian tails. Hence, one can find $u = u(\delta, \varepsilon) >0$ such that $P[\max\{|U|,|V|\} > u] \leq \varepsilon$. By monotonicity of $\mathscr{C}_{L}^{h}$,  and with $\mathbf{h}_u^{\pm} = (h^{\pm}_{u,y})_{y\in \mathbb{Z}^d} $, $h^{\pm}_{u,y} \stackrel{\text{def.}}{=} h \pm p_{yx}u$, so that $\mathbf{h}_u^{-} < \mathbf{h}_u^{+}$, this yields
\begin{equation}\label{eq:inf_bd1}
 I_{\theta}(\mathscr{C}_{L}^{h},x) \leq \mathbb{P}_{\theta,x}[\mathscr{C}_{L}^{\mathbf{h}_u^{-}}(\varphi)] -   \mathbb{P}_{\theta,x}[\mathscr{C}_{L}^{\mathbf{h}_u^{+}}(\varphi)] + \varepsilon =  \mathbb{P}_{\theta,x}[\mathscr{C}_{L}^{\mathbf{h}_u^{-}} \setminus \mathscr{C}_{L}^{\mathbf{h}_u^{+}}] + \varepsilon.
\end{equation}
We introduce the following notion of \textit{pivotal zone}. For an increasing event $A \subset \Omega^{\mathbb{Z}^d}$ and a configuration $\omega$, we say that $K\subset \mathbb{Z}^d$ is a pivotal zone for $A$ in the configuration $\omega$ if $\omega^K \in A$ but $\omega_K \notin A$, and $1_A(\omega^U) = 1_A(\omega_U)$ for all $U \subsetneq K$ (thus $K$ is of minimal size with this property). Accordingly, we define the event $\{ K \text{ is a pivotal set for $A$}\}=\{ \omega ; \,  \omega^K \in A,\, \omega_K \notin A,\, 1_A(\omega^U)= 1_A(\omega_U) \text{ for }U \subsetneq K\}$. For singletons, the second property is obsolete, and one recovers the familiar notion of pivotal sites (see for instance \cite{Gr99}). 

Next, we let $\mathcal{L}^u = \{ y \in \mathbb{Z}^d; \, h^{-}_{u,y} \leq \varphi_y <  h^{+}_{u,y} \}$. Observe that the configurations $\xi^{\mathbf{h}_u^{+}}$ and $\xi^{\mathbf{h}_u^{-}}$ (recall \eqref{eq:def_conf} for notation) agree precisely everywhere outside the (random) set $\mathcal{L}^u$. Moreover, $\mathcal{L}^u$ coincides with the set of sites, open in the configuration $\xi^{\mathbf{h}_u^{-}}$, that one needs to close when raising the level from $\mathbf{h}_u^{-}$ to $\mathbf{h}_u^{+}$. Therefore, 
$$
\mathscr{C}_{L}^{\mathbf{h}_u^{-}} \setminus \mathscr{C}_{L}^{\mathbf{h}_u^{+}} = \{ \mathcal{L}^u \text{ contains a pivotal zone for } \mathscr{C}_{L}^{\mathbf{h}_u^{-}} \} 
$$
(using again monotonicity of  $\mathscr{C}_{L}$). There can typically be more than one pivotal zones in a given configuration, and we denote by $\mathcal{P}^u_L$ the \textit{smallest} (in a given deterministic ordering of the subsets of $B_{2L}$) pivotal zone for $\mathscr{C}_{L}^{\mathbf{h}_u^{-}}$ inside $\mathcal{L}^u$ (on the event $\{ \mathcal{L}^u \text{ contains a pivotal zone for } \mathscr{C}_{L}^{\mathbf{h}_u^{-}} \} $, and otherwise define $\mathcal{P}^u_L=\emptyset$). Returning to \eqref{eq:inf_bd1}, we obtain
\begin{equation}\label{eq:inf_bd2}
I_{\theta}(\mathscr{C}_{L}^{h},x) \leq \mathbb{P}_{\theta,x}[\mathcal{P}^u_L \neq \emptyset] + \varepsilon.
\end{equation}
We first investigate the quantity $\mathbb{P}_{\theta,x}[\mathcal{P}^u_L \cap B(x,3R)^c \neq \emptyset]$, for $L \geq 10R$, separately, and bound
\begin{equation*}
\begin{split}
\mathbb{P}_{\theta,x}[\mathcal{P}^u_L \cap B(x,3R)^c \neq \emptyset] 
&\leq \mathbb{P}_{\theta,x}[ \mathcal{P}^u_L \neq \emptyset, \, \text{dist}(\mathcal{P}^u_L,x) \geq R] + \mathbb{P}_{\theta,x}[\text{diam}(\mathcal{P}^u_L) \geq 2R] \\[0.4em]
&\leq 2 \cdot \mathbb{P}_{\theta,x}[\mathcal{L}^u \cap B(x,R-1)^c \neq \emptyset] \\
&\leq 2\cdot  \sum_{k \geq R}|S(0,k)| \sup_{y \in S(x,k)} \mathbb{P}_{\theta,x}[h^{-}_{u,y} \leq \varphi_y <  h^{+}_{u,y}]\\
&\leq c \sum_{k \geq R} k^{d-1} u   \sup_{y \in S(x,k)} p_{yx}.
\end{split}
\end{equation*}
Recall that the quantity $p_{yx}= P^y_{\theta}[\widetilde{H}_x < \infty]$ decays exponentially in $|y-x|$, thus, by choosing $R=R(\varepsilon)$ sufficiently large, one infers that
\begin{equation}\label{eq:inf_bd4}
\mathbb{P}_{\theta,x}[\mathcal{P}^u_L \cap B(x,3R)^c \neq \emptyset] \leq \varepsilon, \text{ for all } L \geq 10 R(\varepsilon).
\end{equation}
With \eqref{eq:inf_bd4}, coming back to \eqref{eq:inf_bd2}, it follows that
\begin{equation*}
I_{\theta}(\mathscr{C}_{L}^{h},x) \leq \mathbb{P}_{\theta,x}[\mathscr{C}_{L}^{\mathbf{h}_u^{-}}, \,  \emptyset \neq \mathcal{P}^u_L \subset B(x,3R) ] + 2\varepsilon.
\end{equation*}
Now, the occurrence of the event $\mathscr{C}_{L}^{\mathbf{h}_u^{-}}$ and the simultaneous existence of a pivotal zone for this event contained in $B(x,3R)$ imply that $\partial_{\text{out}}B(x,3R)$ is connected to $B_L$ and $S_{2L}$ by two nearest-neighbor paths inside the level-set $E^{\geq \mathbf{h}_u^{-}}$ which do not communicate.
In particular, due to the geometry of the event $\mathscr{C}_{L}^{\mathbf{h}_u^{-}}$, regardless of the position of $x$ inside $B_{2L}$, at least one of these two paths will exit $B(x,L/2)$. All in all, we have thus obtained
\begin{equation} \label{eq:inf_bd6}
I_{\theta}(\mathscr{C}_{L}^{h},x) \leq \mathbb{P}_{\theta,x}[\partial_{\text{out}}B(x,3R) \stackrel{\geq \mathbf{h}_u^{-}}{\longleftrightarrow} S(x,L/2)] + 2\varepsilon \text{ for all $L\geq 10 R(\varepsilon)$}.
\end{equation}
We now show that the probability on the right-hand side of \eqref{eq:inf_bd6} is small as $L\to \infty$. Intuitively, this should be very clear: indeed $\mathbf{h}_u^{-}$ is very close to the constant field with value $h$ outside $B(x,R)$ and $h = h_{**} - 2\ell \geq h_{**}-2\delta$ is subcritical by assumption, cf. \eqref{eq:delta_subcrit} (also neglecting the extra killing at $x$ present under $\mathbb{P}_{\theta,x}$). To make this precise, let us abbreviate $\mathscr{C}_{R,L}^{\mathbf{h}} = \{ \partial_{\text{out}}B(x,3R) \stackrel{\geqslant \mathbf{h}}{\longleftrightarrow} S(x,L/2) \}$, for $\mathbf{h}\in \mathbb{R}^{\mathbb{Z}^d}$. Since $h > h_*$, it follows that $\lim_{L\to\infty} \mathbb{P}_{\theta}[\mathscr{C}_{R,L}^{h}] = 0$ (R is fixed). Moreover,
$$
\mathbb{P}_{\theta}[\mathscr{C}_{R,L}^{h}] \geq \mathbb{E}_{\theta}[ \mathbb{P}_{\theta}[\mathscr{C}_{R,L}^{h}\, | \, \varphi_x] 1\{ \varphi_x \geq 0\}] \stackrel{\eqref{EQ:st_phi_cond_exps}}{\geq}  \mathbb{P}_{\theta,x}[\mathscr{C}_{R,L}^{h}]/2,
$$
for all $L \geq 10R(\varepsilon)$, and therefore $\lim_{L\to\infty} \mathbb{P}_{\theta,x}[\mathscr{C}_{R,L}^{h}] = 0$ as well. Finally, in order to take care of the small perturbation in the level, write
\begin{equation*}
\begin{split}
\mathbb{P}_{\theta,x}[\mathscr{C}_{R,L}^{\mathbf{h}_u^{-}}] 
&\leq \mathbb{P}_{\theta,x}[\mathscr{C}_{R,L}^{h}] +\mathbb{P}_{\theta,x}[\{ y \in \mathbb{Z}^d; \, h^{-}_{u,y} \leq \varphi_y < h \} \cap B(x,3R)^c \neq \emptyset] \\
&\leq \mathbb{P}_{\theta,x}[\mathscr{C}_{R,L}^{h_{**}- \delta}] +\mathbb{P}_{\theta,x}[\mathcal{L}^u \cap B(x,3R)^c \neq \emptyset].
\end{split}
\end{equation*}
The first term goes to zero as $L \to \infty$, and is therefore smaller than $\varepsilon$, for all $L \geq L_0(\delta,\varepsilon)$, and so is the second one, by the same calculation as the one leading to \eqref{eq:inf_bd4}. Substituting into \eqref{eq:inf_bd6} yields that $I_{\theta}(\mathscr{C}_{L}^{h},x) \leq 4\varepsilon$, for all $L \geq L_0(\delta, \varepsilon)$, $x\in B_{2L}$ and $h \in [h_{**}-2\delta, h_{**}-\delta]$, and \eqref{eq:inf_bound_BIS} follows.
\hfill $\square$

\medskip

With Lemma \ref{L:inf_bound_BIS} proved, \eqref{EQ:MAINbis} follows, as explained above. This concludes the proof of Proposition \ref{P:NO_SPEED}.
\end{proof}

We proceed with the proof of our main result, Theorem \ref{T:MAIN}. As alluded to above, this involves working with periodic boundary conditions, in order to obtain the polynomial convergence of Theorem \ref{T:MAIN} for the probability of a well-chosen translation invariant event, related to the quantity $p_{\theta,L}(\cdot)$ we are after. This result is then translated back to the crossing event of interest, first under periodic boundary conditions, and then under the usual ones. 

\bigskip

\noindent \textit{Proof of Theorem \ref{T:MAIN}}. Let $\theta \in (0,1)$ and $M= M(\theta) = 1 + |h_{**}(\theta)| \, (>0)$. By monotonicity of $p_{\theta, L}(\cdot)$, in order to prove \eqref{EQ:MAIN}, it suffices to show
\begin{equation}\label{eq:MAIN_goal}
p_{\theta, L}(h_{**}(\theta)- \delta) \geq 1 - c(\theta, \delta)L^{-\varepsilon(\theta, \delta)}, \text{ for all $L \geq 1$ and $0 <\delta < 1$}.
\end{equation}
Our choice of parameters implies in particular that
\begin{equation} \label{eq:choice_interval}
h_{**}(\theta)- \delta \in (-M(\theta), M(\theta)) \stackrel{\mathrm{def.}}{=}I(\theta), \text{ for all $0 <\delta < 1$}.
\end{equation}
For later purposes, we also fix a parameter 
\begin{equation}\label{eq:ell}
\ell=100.
\end{equation}
The proof operates simultaneously at three scales,
\begin{equation*}
L <  \ell L < \ell^2 L \stackrel{\mathrm{def.}}{=}  \overline{L},
\end{equation*}
where $L$ refers to the quantity appearing in \eqref{eq:MAIN_goal}. In what follows, it will be convenient to identify the vertex set of the torus $\mathbb{T}_{\overline{L}} = (\mathbb{Z}/ 2\overline{L}\mathbb{Z})^d$ with $(\mathbb{Z}\cap [-\overline{L}, \overline{L}))^d  \subset \mathbb{Z}^d$ (and add an edge between all pairs  of corresponding vertices on opposite faces of the box). On $\Omega^{\mathbb{T}_{\overline{L}}}$, we consider the event
\begin{equation}\label{eq:event_A_ti}
\mathcal{A}_{L}=\bigcup_{x\in\mathbb{T}_{\overline{L}}}\{B(x, \ell L)\longleftrightarrow S(x,2\ell L)\}
\end{equation}
(the boxes in question refer to $\ell^{\infty}$-balls on the torus $\mathbb{T}_{\overline{L}}$). As will become apparent below, cf. the discussion leading to \eqref{eq:renorm} and Figure \ref{F:pic_tor}, the reason for introducing the parameter $\ell$ is essentially the following. On $\mathcal{A}_L$, regardless of the particular crossing event $\{B(x, \ell L)\longleftrightarrow S(x,2\ell L)\}$ guaranteeing its occurrence, one can always ensure that a similar crossing event at scale $L$ emanates from one of roughly $\ell^{2d}$ boxes of sidelength $L$ paving the torus $\mathbb{T}_{\overline{L}}$. In particular, the number of such boxes is independent of $L$. 

We define the (decreasing) function 
\begin{equation}
q_{\theta, L}(h)=\mathbb{P}_{\theta}^{   \overline{L}} [\mathcal{A}_{L}^{h}],\text{ for $L \geq 1$ and  \ensuremath{h\in I(\theta)}} \label{EQ:def_q}
\end{equation}
(recall \eqref{eq:events_levset} and \eqref{EQ:notation} for notation). Our first aim is to derive a result similar to \eqref{eq:MAIN_goal}
for the function $q_{\theta, L}(\cdot)$.  Clearly, the event $\mathcal{A}_{L}$ is translation
invariant. Hence, the differential inequality \eqref{EQ:diff_ineq_trans_inv} yields 
\[
-\frac{dq_{\theta, L}(h)}{dh}\geq c(\theta) \cdot q_{\theta, L}(h)(1-q_{\theta, L}(h)) \log L, \text{ for all $L\geq1$ and $h\in I(\theta)$}.
\]
Observing that $\frac{d}{dx}\log(\frac{x}{1-x})=\frac{1}{x(1-x)}$, for all $x \in (0,1)$, and
integrating the previous inequality between arbitrary levels $h<h'$,
with $h,h'\in I(\theta)$, one obtains 
\begin{equation}
\frac{q_{\theta, L}(h)}{1-q_{\theta, L}(h)}\geq\frac{q_{\theta, L}(h')}{1-q_{\theta, L}(h')}L^{c(\theta)(h'-h)},\text{ for all \ensuremath{L\geq1}}.\label{EQ:diff_ineq_int}
\end{equation}
Next, we derive a lower bound for $q_{\theta, L}(\cdot)$ when $h$ is in
the vicinity of $h_{**}$. The latter quantity is defined with respect
to the law $\mathbb{P}_{\theta}$ rather than $\mathbb{P}_{\theta}^{ \overline{L}}$, and the following lemma allows us to change boundary conditions for
sufficiently localized events at small cost. Recall that we identify $\mathbb{T}_{\overline{L}}$ with $(\mathbb{Z}\cap [-\overline{L}, \overline{L}))^d$.
\begin{lem}
\label{L:change_bc} $(\theta \in (0,1))$

\medskip
\noindent For all $L\geq1$, all increasing events $A\subset \Omega^{B(0,\overline{L})}$ measurable with respect to $\sigma (Y_x ; \, x \in B(0, 10 \ell L))$, and all $h\in\mathbb{R}$, 
\begin{equation}
|\mathbb{P}_{\theta}^{\overline{L}}[A^{h}]-\mathbb{P}_{\theta}[A^{h}]|\leq c(\theta)e^{-c'(\theta)\cdot L}.\label{EQ:change_bc}
\end{equation}
\end{lem}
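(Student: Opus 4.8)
The plan is to compare the two Gaussian fields—the one under periodic boundary conditions on $\mathbb{T}_{\overline{L}}$ and the one on $\mathbb{Z}^d$—on the localized region $B(0, 10\ell L)$ supporting the event $A$, using the fact that the covariance $g_\theta(\cdot,\cdot)$ decays exponentially fast, cf. \eqref{eq:st_decay_cov}. The key observation is that the Green function $g_\theta^{\mathbb{T}_{\overline{L}}}(x,y)$ on the torus differs from $g_\theta^{\mathbb{Z}^d}(x,y)$ only through contributions of random walk trajectories that ``wrap around'' the torus, i.e. that travel a distance of order $\overline{L} = \ell^2 L$ before returning; since $\overline{L}$ is much larger than the diameter $\sim 20\ell L$ of the region where $A$ is measurable, these contributions are bounded by $c(\theta)e^{-c'(\theta)\overline{L}}$, hence by $c(\theta)e^{-c'(\theta)L}$ after adjusting constants. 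Concretely, for $x,y \in B(0,10\ell L)$ (identified inside both graphs), one can write, using \eqref{G-GsubK} with $U=\emptyset$ and $K$ the ``far'' part of the torus, or more directly by decomposing trajectories in \eqref{EQ:st_GreenFunction} according to whether they stay within a ball of radius $\overline{L}/2$, that $|g_\theta^{\mathbb{T}_{\overline{L}}}(x,y) - g_\theta^{\mathbb{Z}^d}(x,y)| \leq c(\theta)e^{-c'(\theta)L}$.

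Given this covariance estimate, the next step is to turn it into a bound on the difference of the two probabilities of $A^h$. First I would reduce to events depending on finitely many coordinates by approximation, so that $A^h$ is a Borel subset of $\mathbb{R}^F$ with $F = B(0,10\ell L)$ a finite set, and both laws restricted to $F$ are non-degenerate centered Gaussians with covariance matrices $\Sigma_{\mathbb{T}}$ and $\Sigma_{\mathbb{Z}^d}$ satisfying $\|\Sigma_{\mathbb{T}} - \Sigma_{\mathbb{Z}^d}\|_\infty \leq c(\theta)e^{-c'(\theta)L}$ entrywise; since $|F| \leq cL^d$, the operator-norm difference is at most $c(\theta)L^d e^{-c'(\theta)L} \leq c(\theta)e^{-c''(\theta)L}$. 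Then I would invoke a quantitative Gaussian comparison: either a direct interpolation $\Sigma_t = (1-t)\Sigma_{\mathbb{Z}^d} + t\Sigma_{\mathbb{T}}$ with a derivative bound (a Slepian/Gaussian-interpolation argument, which is cleanest for monotone events $A$ since the relevant derivative involves mixed second partials integrated against a positive density), or a change-of-variables/coupling argument producing a total-variation bound on the two restricted laws of order $e^{-c(\theta)L}$. For the Slepian-type route one uses that $A$ is increasing, together with the FKG-type positivity, so that perturbing the covariance in a controlled way perturbs $\mathbb{P}[A^h]$ by a controlled amount; the exponential smallness of the covariance perturbation then gives \eqref{EQ:change_bc} directly.

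The main obstacle I anticipate is controlling the dimension-dependent prefactor: a naive bound on $|\mathbb{P}^{\overline{L}}_\theta[A^h] - \mathbb{P}_\theta[A^h]|$ through the $|F| \sim L^d$ coordinates could lose a polynomial (or worse) factor in $L$ that a priori swamps the exponential gain $e^{-c'(\theta)\overline{L}}$. Because $\overline{L} = \ell^2 L$ with $\ell = 100$ is genuinely linear in $L$ while the number of coordinates is only polynomial, this is ultimately harmless—$L^d e^{-c'(\theta)\ell^2 L} \leq c(\theta)e^{-c''(\theta)L}$—but one must set up the comparison so that the loss is at most polynomial rather than, say, exponential in $|F|$. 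This is exactly why the Gaussian-interpolation approach is attractive: the error in $\mathbb{P}[A^h]$ is bounded by a sum over pairs $(x,y) \in F \times F$ of $|g^{\mathbb{T}}_\theta(x,y) - g^{\mathbb{Z}^d}_\theta(x,y)|$ times a bounded quantity, which is at most $c L^{2d} \cdot c(\theta)e^{-c'(\theta)\overline{L}}$, still of the form $c(\theta)e^{-c'(\theta)L}$. A secondary technical point is justifying the restriction to finitely many coordinates and the exchangeability of the $\mathbb{T}_{\overline{L}}$-trajectory decomposition with the infinite sum in \eqref{EQ:st_GreenFunction}, but these are routine given \eqref{eq:st_decay_cov}.
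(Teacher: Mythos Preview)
Your approach is correct but differs substantially from the paper's. The paper does not compare covariance matrices directly; instead it invokes the domain Markov property by conditioning on the field along the screen $K_{\overline{L}} = \partial_{\text{int}} B(0,\lceil \overline{L}/2\rceil)$. Since the killed Green functions $g_{\theta,K_{\overline{L}}}^{\mathbb{T}_{\overline{L}}}$ and $g_{\theta,K_{\overline{L}}}^{\mathbb{Z}^d}$ agree on $B(0,10\ell L)$, the conditional laws coincide, and the only discrepancy is the harmonic mean $\mu_x = E^x_\theta[H_{K_{\overline{L}}}<\infty,\,\varphi_{X_{H_{K_{\overline{L}}}}}]$, which is $O(e^{-c(\theta)L})$ on $B(0,10\ell L)$ once $\max_{K_{\overline{L}}}|\varphi|\le L$. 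Monotonicity of $A$ then converts this into a shift of the level $h\to h\pm\varepsilon_L$, and the resulting difference is controlled by the probability that some coordinate lies within $\varepsilon_L$ of $h$. Your route---entrywise bound on $\Sigma_{\mathbb{T}}-\Sigma_{\mathbb{Z}^d}$ via the periodic summation $g^{\mathbb{T}_{\overline L}}_\theta(x,y)=\sum_{z\in 2\overline L\mathbb{Z}^d} g^{\mathbb{Z}^d}_\theta(x,y+z)$, followed by a Gaussian comparison---is arguably more direct and, in its total-variation form (e.g.\ via KL divergence and Pinsker, using that the smallest eigenvalue of $\Sigma_{\mathbb{Z}^d}|_F$ is bounded below by $c(\theta)>0$), does not even require $A$ to be increasing. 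The paper's argument, by contrast, leverages the assumed monotonicity essentially and stays closer to the probabilistic structure of the free field. One small caveat: your remark that the interpolation is ``cleanest for monotone events'' because of positivity of mixed partials is not quite right---that positivity is specific to orthant events in Slepian's lemma, not general increasing events; what actually makes the interpolation go through here is integrating by parts onto the Gaussian density, which yields a bound of the form $\sum_{x,y\in F}|(\Sigma_{\mathbb{T}}-\Sigma_{\mathbb{Z}^d})_{xy}|\cdot C(\theta)$ and needs no monotonicity at all.
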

\begin{proof}
We use the domain Markov property to show that, upon conditioning
on 
\begin{equation} \label{eq:def_K_L}
\text{the field on }K_{\overline{L}}=\partial_{\text{int}}B(0,\lceil \overline{L}/2 \rceil),
\end{equation} 
the effect on the
field inside $B(0,\ell L)$ is very small, regardless of the boundary
condition. Specifically, let $g^{\overline{L}}_{\theta, K_{\overline{L}}}(\cdot, \cdot)$ denote the Green function of a random walk on $\mathbb{T}_{\overline{L}}$ killed uniformly at every step and when exiting $\mathbb{T}_{\overline{L}}\setminus K_{\overline{L}}$ (we include the superscript $\overline{L}$ to distinguish it from the corresponding Green function $g_{\theta, K_{\overline{L}}}(\cdot, \cdot)$ on $\mathbb{Z}^d$). First, observe that $K_{\overline{L}}$ has a screening effect, in the sense that $g^{\overline{L}}_{\theta, K_{\overline{L}}}(x,y)=g_{\theta, K_{\overline{L}}}(x,y)$
for all $x,y\in B(0,\ell^{2}L/2)$. Thus, ${\varphi}_{|_{B(0, 10 \ell L)}}$ has the same law under ${\mathbb{P}}_{\theta, K_{\overline{L}}}^{ \overline{L}}$ as under $\mathbb{P}_{\theta, K_{\overline{L}}}$, cf. \eqref{eq:ell}, and in particular, 
\begin{equation}\label{eq:change_bc_A}
{\mathbb{P}}_{\theta, K_{\overline{L}}}^{  \overline{L}}[A_{L}^{h}({\varphi})]={\mathbb{P}}_{\theta, K_{\overline{L}}}[A_{L}^{h}({\varphi})],
\end{equation}
for all $h\in\mathbb{R}$ and $L\geq1$, by the measurability assumption on $A$. Now, by a union bound and elementary Gaussian
tail estimate, we have that $\mathbb{P}_{\theta}^{ \overline{L}}[\max_{K_{\overline{L}}}|\varphi|\geq L]\leq C(\theta)e^{-c(\theta)L^{2}}$,
for all $L\geq1$ (and a similar bound with $\mathbb{P}_{\theta}$ in place of $\mathbb{P}_{\theta}^{ \overline{L}}$). Therefore, by virtue of Lemma \ref{L:cond_exps} (with $U = \emptyset$ and $K=K_L$), and using \eqref{eq:change_bc_A},
\[
\mathbb{P}_{\theta}^{ \overline{L}}[A^{h}]=\mathbb{E}_{\theta}^{  \overline{L}}\Big[\widetilde{\mathbb{P}}_{\theta, K_{\overline{L}}}[A^{h}(\widetilde{\varphi}_{\cdot}+\mu_{\cdot}(\varphi))]\cdot 1\{\max_{y\in K_{\overline{L}}}\varphi_{y}\leq L \}\Big]+o(e^{-L}), \text{ as $L\to\infty$. }
\]
Here, and in accordance with \eqref{ind+cond_exps}, $\widetilde{\mathbb{P}}_{\theta, K_{\overline{L}}}$ stands for a copy of ${\mathbb{P}}_{\theta, K_{\overline{L}}}$ governing the field $\widetilde{\varphi}$, which is independent of $\varphi$, and $\mu_{\cdot} = \mu_{\cdot}(\varphi)$ is as defined in \eqref{mu}. Moreover, on the event $\{\max_{K_{\overline{L}}}\varphi\leq L\}$, and for arbitrary $x\in B(0,10\ell L)$,
one obtains, for all $L\geq1$, with $P^{x}_{\theta}$ denoting the law of the random walk on $\mathbb{T}_{\overline{L}}$ started at $x$ and killed at rate $\theta$,
\begin{equation*}
\begin{array}{rcl}
\mu_{x} \hspace{-1ex}& \stackrel{\eqref{mu}}{=} & \displaystyle  \hspace{-1ex} \sum_{y\in K_{\overline{L}}}P^{x}_{\theta}[H_{K_{\overline{L}}}<\infty,X_{H_{K_{\overline{L}}}}=y]\cdot \varphi_{y}\\
 \hspace{-1ex}& \leq &  \hspace{-1ex} \displaystyle P^{x}_{\theta}[H_{K_{\overline{L}}}<\infty]\cdot\max_{y\in K_{\overline{L}}}\varphi_{y}\leq c(\theta)e^{-c'(\theta)\cdot L}\stackrel{\text{def.}}{=}\varepsilon_{L},
 \end{array}
\end{equation*}
since $|K_{\overline{L}}|\leq CL^{d-1}$ and $\text{dist}(x,K_{\overline{L}})>L$, cf. \eqref{eq:ell} and \eqref{eq:def_K_L}. By monotonicity
of $A$, this yields 
\begin{equation}
\mathbb{P}_{\theta}^{ \overline{L}}[A^{h}]\leq \widetilde{\mathbb{P}}_{\theta, K_{\overline{L}}}[A^{h-\varepsilon_{L}}(\widetilde{\varphi})]+c(\theta)e^{-c'(\theta)L},\text{ for all \ensuremath{L\geq1} and $h \in \mathbb{R}$.}\label{EQ:bc_estimate1}
\end{equation}
By a similar argument, one also has the lower bound 
\begin{equation}
\begin{split}\mathbb{P}_{\theta}[A^{h}] & \geq\mathbb{P}_{\theta}[A^{h},\min_{y\in K_{\overline{L}}}\varphi_{y}\geq-L]\\
 & \geq\widetilde{\mathbb{P}}_{\theta, K_{\overline{L}}}[A^{h+\varepsilon_{L}}(\widetilde{\varphi})] \cdot \mathbb{P}_{\theta}[\min_{y\in K_{\overline{L}}}\varphi_{y}\geq-L]\geq \widetilde{\mathbb{P}}_{\theta, K_{\overline{L}}}[A^{h+\varepsilon_{L}}(\widetilde{\varphi})]-c(\theta)e^{-c'(\theta)L},
\end{split}
\label{EQ:bc_estimate2}
\end{equation}
for all $L\geq1$ and $h \in \mathbb{R}$. Together, (\ref{EQ:bc_estimate1})
and (\ref{EQ:bc_estimate2}) yield 
\begin{align*}
\mathbb{P}_{\theta}^{  \overline{L}}[A^{h}]-\mathbb{P}_{\theta}[A^{h}] & \leq {\mathbb{P}}_{\theta, K_{\overline{L}}}[A^{h-\varepsilon_{L}}({\varphi})]-{\mathbb{P}}_{\theta, K_{\overline{L}}}[A^{h+\varepsilon_{L}}({\varphi})]+c(\theta)e^{-c'(\theta)L}\\
 & \leq {\mathbb{P}}_{\theta, K_{\overline{L}}}[\, |\varphi_{x}-h| < \varepsilon_{L},\text{ for some } x \in B(0,10\ell L)]+c(\theta)e^{-c'(\theta)L}\\
 & cL^d \cdot 2\varepsilon_{L} \sup_{x \in B(0,10\ell L)} (2\pi g_{\theta, K_{\overline{L}}}(x,x))^{-1/2} +c(\theta)e^{-c'(\theta)L} \\
 &\leq c''(\theta)e^{-c'(\theta)L},
\end{align*}
for all $L\geq1$ and $h \in \mathbb{R}$, where the second line follows since the configurations $\xi^{h- \varepsilon_L}(\varphi)$ and $\xi^{h+ \varepsilon_L}(\varphi)$ coincide otherwise, implying in particular that $1_{A^{h-\varepsilon_{L}}} = 1_{A^{h+\varepsilon_{L}}}$, and the last line because $g_{\theta, K_{\overline{L}}}(x,x) \geq 1$ for all $x$, thus yielding a uniform (in $L$) upper bound on the marginal densities of the killed Gaussian free field. The above proof can be repeated with the role of $\mathbb{P}_{\theta}^{  \overline{L}}$
and $\mathbb{P}_{\theta}$ interchanged, and (\ref{EQ:change_bc}) follows. 
\end{proof}

We return to the proof of \eqref{eq:MAIN_goal}. Let $\delta \in (0,1)$. Lemma \ref{L:change_bc} yields a uniform lower bound for $q_{\theta, L}(h_{**}-\delta/2)$
as $L$ becomes large. Indeed, first note that by definition of $h_{**}$, see Theorem \nolinebreak\ref{T:h_**_properties}, there exists $\widetilde{L}_{0}(\delta, \theta)$
such that, for all $L\geq \widetilde{L}_{0}(\delta, \theta)$ and $h\leq h_{**}-\delta/2$,
$p_{\theta, L}(h)\geq \gamma(\theta) \ (>0)$. On account of \eqref{EQ:change_bc}, this implies that there exists $L_{0}(\delta, \theta)\geq\widetilde{L}_{0}(\delta, \theta)$
such that 
\begin{equation}
\mathbb{P}_{\theta}^{ \overline{L}}[B_{L}\stackrel{\geqslant h}{\longleftrightarrow}\partial_{\mathrm{int}}B_{2L}]\geq \gamma(\theta)/2,\text{ for all \ensuremath{h\leq h_{**}-\delta/2} and \ensuremath{L\geq L_{0}(\delta,\theta)},}\label{EQ:p_L_perLB1}
\end{equation}
where $B_L$ is short for $B(0,L)$, $L \geq 1$. In particular, by definition of $\mathcal{A}_{L}$, this yields, for $L\geq L_{0}(\delta,\theta)$, 
\[
q_{\theta, L}(h_{**}-\delta/2)\stackrel{\eqref{EQ:def_q}}{\geq}\mathbb{P}_{\theta}^{   \overline{L}}[B_{\ell L}\stackrel{\geqslant h_{**}-\delta/2}{\longleftrightarrow}S_{2\ell L}]\stackrel{\eqref{EQ:p_L_perLB1}}{\geq}\gamma(\theta)/2,
\]
as desired. Returning to (\ref{EQ:diff_ineq_int}), one obtains, setting
$h=h_{**}-\delta$ and $h'=h_{**}-\delta/2$, which satisfy $h,h' \in I(\theta)$ by \eqref{eq:choice_interval} and the choice of $M(\theta)$, that for all $L\geq L_{0}(\delta,\theta)$,
\begin{align*}
\frac{1}{1-q_{\theta, L}(h_{**}-\delta)}\geq\frac{q_{\theta, L}(h_{**}-\delta)}{1-q_{\theta, L}(h_{**}-\delta)} \stackrel{\eqref{EQ:diff_ineq_int}}{\geq} \frac{q_{\theta, L}(h_{**}-\delta/2)}{1-q_{\theta, L}(h_{**}-\delta/2)}L^{c(\theta)\cdot\delta}\geq\frac{\gamma(\theta)}{2}L^{c'(\theta, \delta)},
\end{align*}
and after rearranging 
\begin{equation}
q_{\theta, L}(h_{**}-\delta)\geq1-c(\theta)L^{-c'(\delta,\theta)},\text{ for all \ensuremath{L\geq L_{0}(\delta,\theta)}}.\label{EQ:q_LB}
\end{equation}
This bound is similar to the statement (\ref{eq:MAIN_goal})
we are trying to prove, but with the function $q_{\theta, L}(\cdot)$ in place
of $p_{\theta, L}(\cdot)$. In order to make the necessary replacement, we
need to do two things: first, to pass from $\mathcal{A}_{L}$, which
is a union of crossing events at scale $\sim \ell L$, to a crossing event
between two \textit{fixed} concentric boxes of size $\sim L$, and second, to change the boundary conditions.

To tackle the former issue, we use an immediate consequence of the FKG-inequality, often called ``square-root trick'' (see for instance \cite{Gr99}, p.289 for a proof). If $A_1,\dots, A_n$, are all increasing measurable subsets of $\mathbb{R}^\mathcal{G}$
\begin{equation}\label{eq:sqrt_trick}
\sup_{1\leq i \leq n} \mathbb{P}_{\theta}^{\mathcal{G}}[A_i] \geq 1 - \Big(1- \mathbb{P}_{\theta}^{\mathcal{G}}\Big[ \bigcup_{i=1}^n A_i \Big] \Big)^\frac{1}{n}.
\end{equation}
This is typically used to ensure that $\sup_{1\leq i \leq n} \mathbb{P}_{\theta}^{\mathcal{G}}[A_i]$ is close to one, provided the probability of the union is (and $n$ is not too large).
\begin{figure}[h!]
  \centering 
  \includegraphics[scale=0.9]{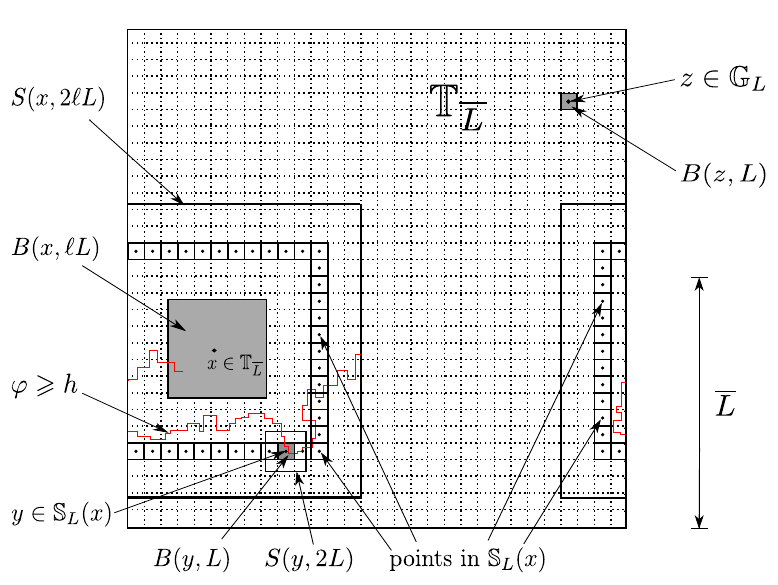}
  \caption{The torus $\mathbb{T}_{\overline{L}}$ is paved by a \textit{fixed} (i.e. independent of $L$) number of boxes of radius $L$, indicated by the dotted lines. The centers of these boxes form the grid $\mathbb{G}_L$. Regardless of the exact location of $x$ on the torus $\mathbb{T}_{\overline{L}}$, if the event $\{ B(x,\ell L) \stackrel{\geqslant h}{\longleftrightarrow} S(x,2\ell L) \}$ occurs (as evidenced by the red path), then so must $\{ B(y, L) \stackrel{\geqslant h}{\longleftrightarrow} S(y,2 L) \}$, for at least one point $y \in \mathbb{G}_L$.}
  \label{F:pic_tor}
\end{figure}
In order to apply \eqref{eq:sqrt_trick} in the present context, we consider the set
\[
\mathbb{G}_{L}=(\mathbb{Z}\cap [-\overline{L}, \overline{L}))^d \cap L\mathbb{Z}^{d} \ (\subset \mathbb{T}_{\overline{L}})
\]
(recall that $(\mathbb{Z}\cap [-\overline{L}, \overline{L}))^d \subset \mathbb{Z}^d$ is identified with the vertex set of $\mathbb{T}_{\overline{L}}$). We view the set $\mathbb{G}_{L}$ as a grid (of mesh size $L$) on the torus $\mathbb{T}_{\overline{L}}$. Note
that $|\mathbb{G}_{L}| = (2\ell^{2})^{d}$, which does not depend on
$L$. Now, suppose $L \geq 100$, and given $x \in \mathbb{T}_{\overline{L}}$, consider the \textit{shell} 
$$
\mathbb{S}_L(x) = \{ y \in \mathbb{G}_{L} ; \,  B(y,L) \cap S(x, 3\ell L/2) \neq \emptyset \},
$$
see also Figure \ref{F:pic_tor}. The set $\bigcup_{y \in \mathbb{S}_L(x)} B(y,L)$ (part of $\mathbb{T}_{\overline{L}}$) disconnects $B(x, \ell L)$ from $S(x, 2\ell L)$, for all $x \in \mathbb{T}_{\overline{L}}$ and $L \geq 100$. Moreover, $ B(y,2L) \subset B(x, 2\ell L)$ for all $y \in \mathbb{S}_L(x)$. Thus, any nearest-neighbor path joining $B(x, \ell L)$ to $S(x, 2\ell L)$ must intersect $B(y,L)$, for at least one $y \in  \mathbb{S}_L(x)$, and subsequently exit $S(y,2L)$, i.e.
\begin{equation}\label{eq:renorm}
\{B(x, \ell L) \longleftrightarrow S(x, 2\ell L) \} \subset \bigcup_{y \in \mathbb{S}_L(x)} \{B(y, L) \longleftrightarrow S(y, 2 L) \}, \text { for $x \in \mathbb{T}_{\overline{L}}$ and $L \geq 100$,}
\end{equation}
Consequently,
\[
\mathcal{A}_{L}^{h}\stackrel{\eqref{eq:event_A_ti}, \eqref{eq:renorm}}{\subset} \bigcup_{y\in\mathbb{G}_{L}}\{{B}(y,L)\stackrel{\geqslant h}{\longleftrightarrow}{S}(y,2L)\},
\]
for all $h\in\mathbb{R}$, $L \geq 100$. The crucial point is that the right-hand
side is a union over a fixed (i.e. independent of $L$) number of
events, which all have the same probability under $\mathbb{P}_{\theta}^{  \,  \mathbf{p}, \overline{L}}$. An application of \eqref{eq:sqrt_trick} then yields
\begin{align*}
\mathbb{P}_{\theta}^{  \overline{L}}[{B}(0,L)\stackrel{\geqslant h}{\longleftrightarrow}{S}(0,2L)] & \geq1-\Big(1-\mathbb{P}_{\theta}^{  \overline{L}}\Big[\bigcup_{y\in\mathbb{G}_{L}}\{{B}(y,L)\stackrel{\geqslant h}{\longleftrightarrow}{S}(y,2L)\}\Big]\Big)^{1/(2\ell^{2})^{d}}\\
 & \geq1-(1-q_{\theta, L}(h))^{1/(2\ell^{2})^{d}},
\end{align*}
for all $L\geq100$ and $h\in\mathbb{R}$. Hence, on account
of (\ref{EQ:q_LB}), it follows that 
\[
\mathbb{P}_{\theta}^{  \overline{L}}[{B}(0,L)\stackrel{\geqslant h_{**}-\delta}{\longleftrightarrow}{S}(0,2L)]\geq1- c(\theta) L^{-c'(\delta, \theta)}
\]
for all $L\geq 100 \vee L_{0}(\delta, \theta)$. Then, by Lemma \ref{L:change_bc},
we see that $\mathbb{P}_{\theta}^{ \overline{L}}[{B}(0,L)\stackrel{\geqslant h_{**}-\delta}{\longleftrightarrow}{S}(0,2L)]$
can be replaced by $p_{\theta,L}(h_{**}-\delta)  = \mathbb{P}_{\theta}[B(0,L)\stackrel{\geqslant h}{\longleftrightarrow}S(0,2L)]$ upon possibly enlarging  $L_{0}(\delta, \theta)$. Finally \eqref{eq:MAIN_goal} follows by adapting the constant $c(\theta)$ (in a manner depending on $\delta$), as to allow for all $L \geq 1$. This completes the proof of Theorem \ref{T:MAIN}. \hfill $\square$

\medskip

\begin{remark}\label{R:FINAL} 1) The square-root trick in the above proof is reminiscent
of an argument of Bollob\'as and Riordan \cite{BR06} in the context of rectangular side-to-side crossings for percolation on two-dimensional (random) Voronoi tessellations, which was later re-used to derive sharp-threshold results for similar crossing events in the planar
random cluster model, see \cite{GG06}, \cite{BDC12}.

\medskip

\noindent 2) (Generalizations). For clarity of exposition, we have considered the case of symmetric simple random walk, but our results readily generalize e.g. to any conductance model on $\mathcal{G}$, with, say, bounded conductances, and a uniform non-zero killing measure (our setup corresponds to putting unit conductances on all edges of $\mathcal{G}$). Moreover, most of our results, among them, Theorem \ref{T:MAIN}, Proposition \ref{P:dom_inf_ptwise} and Corollary \ref{C:diff_ineq}, continue to hold in dimension $d=2$. However, the availability of additional tools, and in particular, planar duality techniques, might allow for certain simplifications in the proofs. \hfill $\square$

\end{remark}

\noindent{\textbf{Acknowledgements.}} The author thanks Alain-Sol Sznitman for several useful discussions and for his comments on an earlier draft of this manuscript.

\bibliography{rodriguez}
\bibliographystyle{plain}

\newpage{}

\end{document}